\documentclass[1p,times]{elsarticle}
\usepackage{amsmath,amsthm,amstext,amscd,amssymb,euscript,url}
\usepackage{mathrsfs,euscript,dsfont}
\usepackage{mathtools}
\usepackage{enumerate}
\usepackage{bbm}
\usepackage{mathtools}
\usepackage{etoolbox}

\DeclareFontFamily{U}{matha}{\hyphenchar\font45}
\DeclareFontShape{U}{matha}{m}{n}{
<-6> matha5 <6-7> matha6 <7-8> matha7
<8-9> matha8 <9-10> matha9
<10-12> matha10 <12-> matha12
}{}
\DeclareSymbolFont{matha}{U}{matha}{m}{n}

\DeclareFontFamily{U}{mathx}{\hyphenchar\font45}
\DeclareFontShape{U}{mathx}{m}{n}{
<-6> mathx5 <6-7> mathx6 <7-8> mathx7
<8-9> mathx8 <9-10> mathx9
<10-12> mathx10 <12-> mathx12
}{}
\DeclareSymbolFont{mathx}{U}{mathx}{m}{n}

\DeclareMathDelimiter{\vvvert} {0}{matha}{"7E}{mathx}{"17}%

\DeclarePairedDelimiterX{\normiii}[1]
{\vvvert}
{\vvvert}
{\ifblank{#1}{\:\cdot\:}{#1}}

\usepackage{xcolor}

\usepackage{ushort} 

\def\1{{\mathds 1}}

\newtheorem{theorem}{Theorem}[section]
\newtheorem{lemma}[theorem]{Lemma}
\newtheorem{corollary}[theorem]{Corollary}
\newtheorem{proposition}[theorem]{Proposition}
\newtheorem{remark}[theorem]{Remark}

\newcommand{\Z}{\mathds Z}
\newcommand{\R}{\mathds R}

\newcommand{\D}{\mathbb{D}}

\newcommand{\vM}{{\underline {\mathcal M}^{(\sK)}}}
\newcommand{\vsbullet}{\,\sbullet\,}

\DeclareMathOperator{\e}{\mathrm{e}}

\newcommand{\un}{\mathbbm{1}}

\newcommand{\sK}{{\scriptscriptstyle K}}
\newcommand{\TK}{T^{(\sK)}}

\newcommand{\proba}{{{\mathds P}}^{(\sK)}}
\newcommand{\QK}{\widehat { \mathds P}^{(\sK)}}
\newcommand{\EQK}{\widehat { \mathds E}^{(\sK)}}

\newcommand{\esperance}{{\mathds E}^{(K)}}
\newcommand{\nf}{\underline{n}_{*}^{(\sK)}}
\newcommand{\xf}{\underline x_{*}}
\newcommand{\lambK}{\lambda_{\sK}}
\newcommand{\laKqsd}{\nu_{\sK}}
\newcommand{\lauK}{u_{\sK}}
\newcommand{\NK}{N^{(K)}}
\newcommand{\sgK}{P^{(\sK)}}
\newcommand{\vn}{\underline n}
\newcommand{\vm}{\underline m}
\newcommand{\vx}{\underline x}
\newcommand{\vy}{\underline y}
\newcommand{\vz}{\underline z}
\newcommand{\vun}{\underline 1}
\newcommand{\vr}{\vec r}
\newcommand{\vs}{\underline s}
\newcommand{\vX}{\underline X}

\newcommand{\vN}{{\underline N^{(\sK)}}}
\newcommand{\vB}{\underline B}
\newcommand{\vD}{\underline D}
\newcommand{\ve}{\underline e}
\newcommand{\Oun}{\mathcal{O}(1)}
\newcommand{\etaK}{\eta_{\sK}}
\newcommand{\deltaK}{\delta_{\sK}}
\newcommand{\MK}{M_{\sK}}
\newcommand{\JK}{J_{\sK}}
\newcommand{\AK}{A_{\sK}}
\newcommand{\qsd}{q.s.d. }
\newcommand{\vecz}{{\ushort{0}}}
\newcommand{\tauex}{\tau^{\sK}_{\vecz}}
\newcommand{\moyenne}{\mathscr{M}(A,\,\alpha,\,K)}

\newcommand{\domq}{\Z^d_{+}\backslash\{\vecz\}}
\newcommand{\rpd}{\R_{+}^{d}}
\newcommand{\sbullet}{\raisebox{0.4mm}{\scalebox{0.6}{\textbullet}}}

\newcounter{numquestion}
\addtocounter{numquestion}{1}

\begin{document}

\begin{frontmatter}
\journal{Stochastic Processes and their Applications}

\title{Threatening  excursions in large population\\
 quasi-stationary birth and death  systems.\\
On a question of Antonio Galves.}


\author[1]{Pierre Collet \corref{cor1}}
\ead{pierre.collet@polytechnique.edu}
\author[2]{Servet Mart\'{\i}nez}
\ead{smartine@dim.uchile.cl}
\author[3]{Sylvie M\'el\'eard}
\ead{sylvie.meleard@polytechnique.edu}

\affiliation[1]{organization={Centre de Physique Th\'eorique, CNRS, Ecole
polytechnique, Institut Polytechnique de Paris},
city={Palaiseau},
 country={France}} 
\affiliation[2]{organization={Departamento de Ingenier\'{\i}a Matem\'atica
and Centro de Modelamiento Matem\'atico},
addressline={Casilla 170-3 Correo 3}, 
city={Santiago}, 
 country={Chile}}
\affiliation[3]{organization={CMAP, Ecole Polytechnique, CNRS, Institut
    polytechnique de Paris, Inria}, 
city={Palaiseau},
country={France}}

\cortext[cor1]{Corresponding author}


\begin{abstract}
  We consider time continuous multispecies birth and death processes
  in a regime of large populations.  The jump rates depend on a large
  scaling parameter $K$ modeling the charge capacity. When $K$ tends
  to infinity, the process is close (in finite time) to a dynamical
  system containing a non zero global attracting equilibrium and zero
  as unstable equilibrium.  For each fixed $K$, extinction in finite
  time occurs almost surely and a quasi-stationary distribution occurs
  naturally in the study of the statistics over times scales which are
  large but smaller than the extinction time scale.  Before this
  catastrophic event the process makes many unsuccessful large
  deviations attempts with time scales corresponding to how far it
  deviates from the quasi-equilibrium. The paper concerns the
  statistical description of these typical trajectories  starting  from the
  quasi-stationary distribution until extinction.  An unusual mixing
  property  yields  large time scale behavior for the
  process starting  from a fixed state.  We give a precise statistical description of the
  successive exit  times  of the process rescaled by $K$ from a
  neighborhood of the equilibrium of the dynamical system in a
  clumping time scale and prove their asymptotic Poisson
  distribution. We also  give a precise description of the asymptotic  distribution of the successive records until
  extinction.
\end{abstract}

 \begin{keyword}
 quasi stationary distribution \sep exit time \sep clumping time scale \sep Poisson limit \sep large deviation potential\sep record profiles\sep 
 q-processus\sep $\phi$-mixing 
\MSC[2020] 60F05\sep 60J27\sep  60J74\sep 60J80 \sep 92D25
 \end{keyword}

\end{frontmatter}

\noindent\textsl{This article is dedicated to the memory of Antonio Galves
  (1947-2023). Antonio was  an eminent colleague and also a very
  close friend. We will always miss his brilliant scientific
  competence and extensive culture, his inexhaustible kindness, energy
  and optimism,  
 and his charming smile.}




\section{Introduction}\label{intro}

\subsection{The model and main results}

We consider  time continuous multispecies birth and death processes  in a regime of
large populations.  For $d$ species ($d\ge1$) the state of the system
at  a given time is
 an integer valued vector in $\Z^d_{+}$ whose components
are the number of individuals of each of the $d$ species.
 The jump rates depend on a  large scaling parameter $K$  of
 the 
 charge capacity. At state $\vn\in \Z^d_{+}$, the population birth rate 
  is given by $K\,\vB(\vn/K)$ and the death rate by
  $K\,\vD(\vn/K)$, where $\vB$ and $\vD$ are regular vector fields
  in $\rpd$ with nonnegative components. 
  
  We will denote by $\vX$ the vector field $\vB-\vD$ and by 
$(\varphi_{s})_{s\in \R_{+}}$ the associated semi-flow. Recall that $\varphi_{0}= identity$. 
  
  We will impose
  later on some additional hypotheses on $\vB$ and $\vD$ following
  \cite{CCM1}, \cite{CCM2} and \cite{CCMM}. In particular, $\vB(\vecz)=\vD(\vecz)=0$.
The process is denoted by $\big(\vN(t)\big)_{t\ge0}$. Under the
hypotheses in  \cite{CCM1}, \cite{CCM2} and \cite{CCMM} (see Subsection \ref{hypo}), the process
will reach almost surely in finite time the state $\vecz$ (total extinction). In the absence of
spontaneous generation, this state is absorbing and the corresponding
Dirac mass is the unique invariant (ergodic) probability measure. 

These assumptions also  imply that there exists  a globaly attracting equilibrium $\xf$ belonging to the interior of $\rpd$ \textup{(}fixed point of $\vX$
\textup{)} such that
\[
\vB(\xf)-\vD(\xf)=\vX(\xf)= {0}\,.
\]

It is a classical result (see e.g. \cite{Kurtz}) that
if the process starts from $[K\vx_{0}]$ ($\vx_{0}\in \rpd\backslash \{\vecz\})$ and  $\,[\;]$ denotes the
vector of integer parts), then the rescaled process $\vN(\sbullet)/K$
 converges,  as $K$ tends to infinity on any finite time interval, to
the solution of the differential system
$$\frac{d \vx}{dt} = \vB(\vx)-\vD(\vx)=\vX(\vx).$$

We now describe briefly  the behavior of a  trajectory of
the process up to extinction.

Let $\vx_{0}\in\rpd\backslash \{\vecz\}$ and consider a typical trajectory starting at
$[K\vx_{0}]$. With a probability very close to one the process will
first reach a small neighborhood of 
$$
\nf=[K\,\xf]
$$
and fluctuate around this point.  
In \cite{CCMM} we studied  the fluctuations of the
process around the point $\nf$. These typical
fluctuations are of order $\sqrt K$ and most of the time up to
extinction the process will be in this regime.
However, on a sufficiently large time scale (exponential in $K$ but
still much smaller than the extinction time), one can observe large
excursions of order $K$ away from this neighborhood.

The present paper is devoted to the description of  typical
trajectories of the process until extinction, and the derivation  
of statistics and time scales of these large
excursions. 

Since  extinction in finite time occurs almost surely, there is no
nontrivial invariant measure. However there is a particular probability measure
called a quasi-stationary distribution which occurs naturally in the study
of the statistics over times scales which are large but smaller than
the extinction time scale (see for example \cite{CCM1}, \cite{CCM2},
\cite{CCMM}). Through an unusual mixing property, the law of the
process started from a fixed state  involves 
this quasi-stationary distribution (see \eqref{melange}). 

Consider  an open subset $A$ of $\rpd$ containing the fixed point
$\xf$. For an initial condition $\vN(0)$  such that $\vN(0)/K$ belongs
to $A$, 
 one can look at the successive times of exit of the process from $K*A =K A\cap \Z^d$.
We will prove that these successive times normalized by a time scale depending
on  $A$ converge when $K$ tends to infinity towards a Poisson
process. The time scale is related to the large deviation potential
of the process. For a typical trajectory, after an exit time of $K*A$
and a rather small ``wandering'' time, the trajectory comes back to a
neighborhood of $\nf$. 

Consider now another open set $G$ such that $ A\Subset
G\Subset\rpd$. The same result holds for the successive times of exit
from $K*G$. However the time scale associated to $G$ is exponentially
larger (in $K$) than the time scale associated to $A$.
In other words, between the times of visit to the exterior of $K*G$ a
typical trajectory has experienced many more visits to the complement
of $K*A$ without leaving $K*G$ followed by returns to a
neighborhood of $\nf$. On the time scale associated to $G$, we prove 
a law of large numbers for the total time the process spends outside $A$.

This can be furthermore generalized to a funnel of open sets whose
intersection is $\xf$ and union $\rpd$. We get a family of increasing
time scales   corresponding to the times of visits to each of these open
sets. On the corresponding time scale, the visits are all
approximately Poissonian.

This complex behavior is illustrated in Figure \ref{fig1} which
gives the result of the simulation of a trajectory in a system with
one specie up to extinction. Note that the time axis is on a log
scale, hence small times are stretched while large times are
compressed. In a linear scale the picture will be almost completely
black. Note also that the simulation time up to extinction grows
exponentially fast with $K$ prohibiting using larger values of $K$. 


\begin{figure}[!t]
\begin{center}
    \includegraphics[width=10cm]{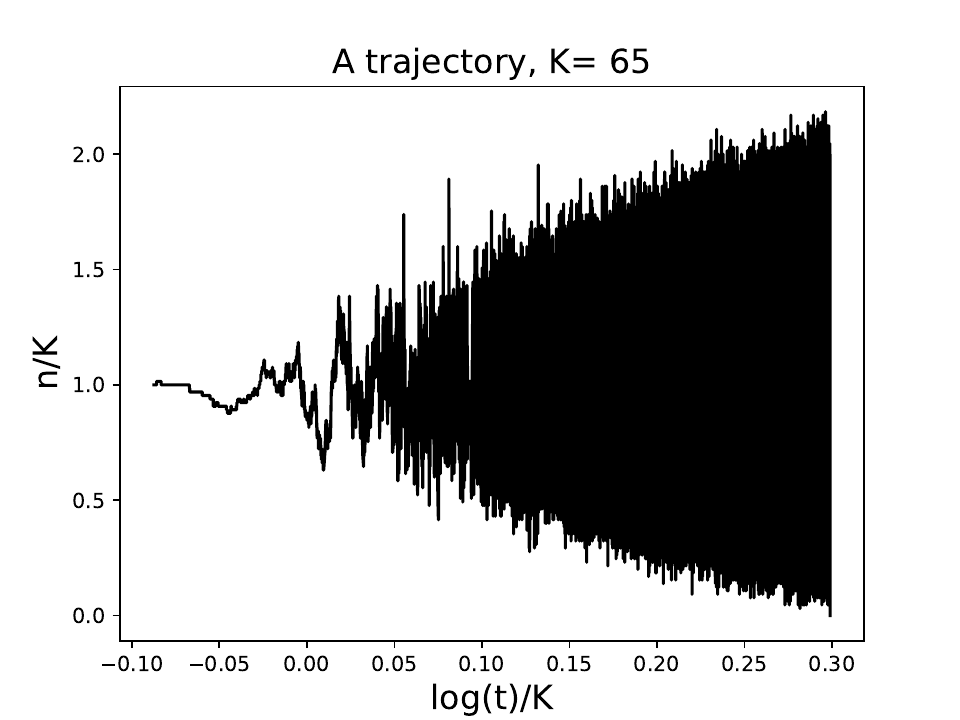}
\end{center}
\caption{A simulation for a one specie model with $K=65$,
$B(x)=2\,x$ , $D(x)=x+x^{2}$ and initial condition $\nf$.}
\label{fig1}
\end{figure}
  
\medskip 
For any $K\ge 1$, we denote by $\tauex$ the extinction time and 
by $\sgK_{t}$ the sub-Markovian  semigroup associated
to $\big(\vN(t)\big)_{t\ge0}$ defined for $\vn\in \domq$ and a measurable bounded function $f$  by 
$$\sgK_{t}f(\vn) = \esperance_{\vn}(f(\vN(t))\, ;\, \tauex>t ).$$

 In  \cite{CCM1},
\cite{CCM2} and \cite{CCMM}, we proved existence and established some
properties of a unique quasi-stationary distribution $\laKqsd$ on $\domq$ (\qsd for short).  (See also \cite{CMSM} and \cite{MV}).
The \qsd provides some information on the  behavior of the process before extinction.

The \qsd 
$\laKqsd$ satisfies  in $\domq$  for any $t\ge0$
\begin{equation}\label{invqsd}
\laKqsd\circ \sgK_{t}=e^{-\lambK\,t}\laKqsd
\end{equation}
and
\begin{equation}\label{expqsd}
\proba_{\laKqsd}\big(\tauex\ge t\big)=e^{-\lambK\,t}\ .
\end{equation}

In particular the \qsd satisfies   
for any subset $G$ of $\domq$
$$
\proba_{\laKqsd}\big(\vN(t)\in G\,\big|\,\tauex\ge t\big)
=\laKqsd\big( G\big)\,.
$$
The extinction rate $\lambK$ 
is  of the form
\begin{equation}
\label{cherlambda}
\lambK=e^{-K\,\Oun}\;.
\end{equation}
The inverse of this number is the expectation of the extinction time in the \qsd which is exponentially large 
 for  large $K$.
There also exists a unique nonnegative right eigenfunction $\lauK$ on $\domq$
 such that for any $t\ge0$
\begin{equation}\label{invu}
\sgK_{t}\,\lauK=e^{-\lambK\,t}\lauK\,.
\end{equation}

 As we have shown in  \cite{CCMM}, many long
time statistics of the process are described using the q.s.d., although the q.s.d. is not  an invariant measure and  the process started in the \qsd is neither stationary nor conservative.
There is however another process, called the q-process. This is  the process $\vN$ conditioned to never being extinct. This q-process is however stationary and mixing, and his invariant probability measure is absolutely continuous with respect to the q.s.d. $\laKqsd$ with density $\lauK$.

\medskip 

 We now describe more precisely the mathematical objects involved in our study.

\medskip The sample space is $\D(\R_+,\R_+^d)$, which is the Skorohod space 
of left-limited and right continuous trajectories on the time set $\R_+$ with values on the state space $\R_+^d$. As
usual it is endowed with the Borel $\sigma-$field associated to the topology of the Skorohod distance, see Chapter 3 in \cite{EK}. We will denote by 
${\cal F}^{K}_{0,\infty}=\sigma({\underline N}^{(K)}(s), s\in [0,\infty))$ the filtration of the process ${\underline N}^{(K)}$.

\medskip 
We will consider  open sets   containing $\xf$ on which we impose the  hypotheses described below, which follow the assumptions 6.1-6.5  in \cite{SW} 
p. 133 .

\noindent 
{\bf Assumption (HS)} : 
\emph{ A subset $A\subset \rpd$ satisfies Assumption ${\bf (HS)}$ if $A$  is an open connected and  bounded set   such that 
 $\xf\in A$ and 
with compact closure  contained in the interior of $\R_{+}^d$, abbreviated by  $A\Subset \R_{+}^d$.  We assume  moreover that $\partial A$ is regular, and  that
$$
\inf_{\vx\in\partial A} \langle \vX(\vx)\,,\, {\vn}^{A}_{\vx}\rangle >0
$$ 
where ${\vn}^{A}_{\vx}$ denotes the inner normal to $\partial A$ at
the point $\vx\in \partial A$.}\\
\emph{This property implies that  the open set  $A$ is invariant by
the flow $\varphi_{\vsbullet}$ of $\vX$.}\\

\medskip For  $A\subset \rpd$, we define
$$
K*A = \{\vn\in \Z^d; \ \frac{\vn}{K} \in A\}
=(K\,A)\cap\Z^{d} \;.
$$

\medskip 
For an open set $A$, we will denote by $\TK_{A}$ the first exit time of the process $\vN/K$
from $A$, defined as the hitting time of the complement  $A^c$,  with $\TK_{A}=0$ if the process starts outside $K*A$, namely 
$$
\TK_{A} = \inf\left\{t\ge 0, {\vN(t)\over K}\notin A\right\}\;.
$$

The successive
times of exit of the process $\vN/K$ from $A$ are well defined but when leaving $A$,
the process $\vN/K$ may come in and out several times in a short time interval
before going far away from the boundary. Describing the details of
this behavior is outside of the scope of this paper. Instead we will
consider a less precise description on a clumping time scale
$\etaK(A)$ (a terminology used for example  in Aldous \cite{Aldous}). 
We will only ask whether or not the process  $\vN/K$ reached the
outside of $A$
during successive intervals of length $\etaK(A)$ .

We have at this point  five time scales which will turn out to be ordered as  below
\begin{center}
\begin{tabular}[]{ll}
jump time scale of the process & $\approx 1/K$\\
$\ll$ unit time scale &  1\\
$\ll$ clumping time scale & $\etaK(A)$\\
$\ll$ rare event time scale of $A$ & $\esperance_{\laKqsd}(\TK_{A}) $ \\
$\ll$ extinction  time scale & $1/\lambK$.\\
\end{tabular}
\end{center}

\medskip
Given a clumping time scale $\etaK(A)>0$,
the clumping structure  of the process is described through a 
sequence of Bernoulli random variables $\big(X_{j}(A,\,K)\big)_{j\in \mathds{Z}_{+}}$ defined
by 
\begin{equation}
\label{def-xjak}
X_{j}(A,\,K)=\begin{cases}1&\;\mathrm{if}\; 
\NK(t)\in K*A^c,\,\mathrm{for\ some}\; t\in [j\etaK(A),\,
  (j+1)\etaK(A)[\;,\\
0&\;\mathrm{otherwise}\;.
\end{cases} 
\end{equation}
Due to almost sure extinction, we have almost surely $X_{j}(A,\,K)=1$
for $j$ large enough.

For $M$ an integer, our main result concerns the $M$-first  clumping time intervals with an excursion outside $A$. This
number for the  time  interval $[0, \, \etaK(A)\, M]$ is equal to 
$$
\sum_{j=0}^{M}X_{j}(A,\,K)\, .
$$

Our main result can be described as follows skipping the  
hypotheses  {\bf (HV)}  stated in details in
Subsection \ref{hypo}.

\begin{theorem}\label{MainBD}
Let $A$    be  a subset of $\rpd$ satisfying Assumption  {\bf (HS)}. 
Under the hypotheses {\bf (HV)}, 
for any $K>1$ 
one can find  a number
$\etaK(A)$ and an integer $\MK(A)$ such that for any fixed $s>0$ 
$$
\lim_{K\to\infty}d_{\mathrm{TV}}\left(
\mathscr{L}^{(\proba_{\laKqsd})}\left(\sum_{j=0}^{[\MK(A)\,s]}X_{j}(A,K)\right)
\,,\,\mathrm{Poisson}(s)\right)=0\;,
$$
where
$
\mathscr{L}^{(\proba_{\laKqsd})}\left(\sum_{j=0}^{[\MK(A)\,s]}X_{j}(A,K)\right)
$
denotes the law of the random variable $\sum_{j=0}^{[\MK(A)\,s]}X_{j}(A,K)$ defined from the process $\vN$ starting from $\laKqsd$, $\mathrm{Poisson}(s)$ denotes a Poisson law of parameter $s$ and $d_{\mathrm{TV}}$ is the
total variation distance.
\end{theorem}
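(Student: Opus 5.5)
The plan is to use a Poisson approximation for sums of weakly dependent Bernoulli variables: the Chen--Stein method, in the form of the Arratia--Goldstein--Gordon bound, or equivalently a block-mixing criterion in the spirit of Galves--Schmitt and Abadi. Its three ingredients are: (i) the marginal probability $p_K=\proba_{\laKqsd}(X_0(A,K)=1)$ is tiny and roughly stationary in $j$; (ii) local dependence is negligible, that is, two clumps close in time rarely both see exits; (iii) long-range dependence is controlled by a $\phi$-mixing coefficient. We then set $\MK(A)=[1/p_K]$, so the mean of the sum over $[\MK(A)s]$ blocks is asymptotically $s$.

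\textbf{Step 1: quasi-stationarity of the marginals.} Under $\proba_{\laKqsd}$, by \eqref{invqsd}, the law of $\vN(j\etaK(A))$ restricted to $\{\tauex>j\etaK\}$ is $e^{-\lambK j\etaK}\laKqsd$. Hence
\[
\proba_{\laKqsd}(X_j=1)=e^{-\lambK j\etaK}p_K+\big(1-e^{-\lambK j\etaK}\big),
\]
where the second term covers extinction, since $\vecz\in K*A^c$. For $j\le \MK s$ we have $\lambK\MK\etaK\approx \lambK \esperance_{\laKqsd}(\TK_A)\to 0$, exponentially in $K$, because the rare-event scale of $A$ is exponentially smaller than $1/\lambK$ (potential barrier of $A$ is smaller than that of extinction, by (HV)). So all marginals equal $p_K(1+o(1))$ plus a negligible error.

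\textbf{Step 2: choice of $\etaK(A)$ and the local term.} Choose $\etaK(A)$ much larger than the return time to a neighbourhood of $\nf$ after an exit (polynomial or $e^{o(K)}$, using the Freidlin--Wentzell / Shwartz--Weiss estimates under (HS) and the results of \cite{CCMM}), but much smaller than $\esperance_{\laKqsd}(\TK_A)$. For the Chen--Stein term $b_2=\sum_{0<|i-j|\le L}\proba(X_i=X_j=1)$, with $L$ a window of blocks, we apply the Markov property at the first exit in block $i$. After the exit, with probability $1-o(1)$ the process returns to a small ball around $\xf$ within time $\ll\etaK$. From there, by the large deviation lower bound on the exit time (Theorem of \cite{SW}), the probability of another exit within $L\etaK$ is $O(L\etaK/\esperance(\TK_A))=O(Lp_K)$. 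The residual event, staying near $\partial A$ without return, has probability decaying exponentially in $\etaK$. This gives $b_2\le \MK\cdot L\, p_K\,(Lp_K+o(1)\,)$. We also need $\proba(X_j=1)\approx\etaK/\esperance(\TK_A)$, i.e. roughly exponential exit times from the qsd; this follows from the same renewal argument, and fixes $\MK\approx\esperance_{\laKqsd}(\TK_A)/\etaK$.

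\textbf{Step 3: mixing (main obstacle).} The term $b_3$ requires that $\sigma(X_j,j\ge i+L)$ be nearly independent of the past up to block $i$, uniformly. For this we use the "unusual mixing property" \eqref{melange}: from any fixed state $\vn$ in the bulk, $\proba_{\vn}(\vN(t)\in\cdot\,|\,\tauex>t)$ converges to $\laKqsd$ at a rate uniform on $K*G$ for $G\Supset A$. Equivalently we use the $\phi$-mixing of the q-process with density $\lauK$, with a relaxation time that is $e^{o(K)}$ or at least $\ll\esperance(\TK_A)$. I would first try to get this through a coupling: two copies started in $K*A$ both reach a $\sqrt K$-neighbourhood of $\nf$ quickly, and then coalesce, using the fluctuation estimates of \cite{CCMM}. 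The remaining problem is uniformity over starting points that have just exited $A$ or lie near $\vecz$; those are handled by the fact that such states have $\laKqsd$-mass of order $p_K$ or smaller. Taking $L\etaK$ equal to the mixing time, with $L p_K\to 0$, gives $b_3\to0$. Combining the three terms, Chen--Stein yields $d_{\mathrm{TV}}\to0$ against Poisson of mean $\sum_j\proba(X_j=1)\to s$, and the mean shift costs $o(1)$ in total variation.
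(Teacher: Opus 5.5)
Your architecture (Poisson approximation for a weakly dependent Bernoulli sequence) is also the paper's, which applies Chen's Theorem~4.1 for $\phi$-mixing sequences. But the step you flag as the main obstacle is left open, and the difficulty is not where you put it.

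Uniformity in the starting state is free. \eqref{melange} already carries a supremum over all $\vn\in\domq$, and $\lauK$ is bounded below by \eqref{supuK}, so no coupling is needed. Discarding states of mass ``of order $p_K$'' would not work anyway: $\phi$-mixing is a supremum over conditioning events, and $\MK p_K$ is of order one.

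The real obstruction is that $\Gamma(t,K)$ in \eqref{borneGamma} does not decay in $t$: for $1\ll t\ll\lambK^{-1}$ it stays at $C_1e^{-C_2K}$. The long-range Chen--Stein term is of order $\MK\,\phi$ (last term of \eqref{eq:formula}). With your $\etaK=e^{o(K)}$ you get $\MK=e^{K(V_*(A)+o(1))}$, so \eqref{melange} alone would need $C_2>V_*(A)$, which is not available.

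The missing idea is the bootstrap of Lemma~\ref{mieuxcor}. With the rank-one kernel $\mathcal{R}_*=e^{-\lambK\zeta\log K}\lauK\otimes\laKqsd$ one has $\mathcal{R}_*(\sgK_{\zeta\log K}-\mathcal{R}_*)=(\sgK_{\zeta\log K}-\mathcal{R}_*)\mathcal{R}_*=0$. Hence the error for $\sgK_{p\zeta\log K}$ is at most $\Gamma(\zeta\log K,K)^p$. Taking $\Gamma(\zeta\log K,K)\le e^{-1}$ removes the floor and yields $\phi$-mixing of the q-process at rate $C_1'e^{-C_3't/\log K}$ (Theorems~\ref{phimelange} and~\ref{thm:mixing}). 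You then need $\phi(L\etaK)\ll 1/\MK$, so $L\etaK$ must exceed the relaxation time by a factor of order $K$, not merely equal it.

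The same ingredient is hidden in your Step~2. Shwartz--Weiss only determines exit probabilities up to factors $e^{o(K)}$, which is useless at your scale. So ``$O(Lp_K)$'' for a new exit from near $\xf$ must go through the q.s.d.\ (union bound over blocks via Lemma~\ref{decalgen}), with a transfer error $\ll p_K$. That is precisely Lemma~\ref{mieuxcor}(ii).

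Two further steps are missing. First, you call the q-process and $\proba_{\laKqsd}$ ``equivalent''; they are not. Under $\proba_{\laKqsd}$ the $X_j$ are non-stationary, and decorrelation (Theorem~\ref{RBD}) carries the weight $\lauK(\vN(t_1))$. Bounding it crudely through $\int|1-\lauK|\,d\laKqsd\le 2e^{-C''K}$ and summing over $\MK^2$ pairs costs $O(\MK e^{-C''K})$, again not small for your $\MK$. The paper applies Chen's theorem under the stationary $\QK_{\mu_{\sK}}$ and transfers once, by Lemma~\ref{connect}. Its error $\int|1-\lauK|\,d\laKqsd+(e^{\lambK t}-1)$, with $t=([\MK s]+1)\etaK$, does not grow with $\MK$. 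It does need $\lambK\MK\etaK\to0$, which rests on Lemma~\ref{lambdaV} and the strict inequality of Theorem~\ref{ordreV}, not on (HV) directly.

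Second, your local term ignores clumps straddling a block boundary, i.e.\ a first exit in block $i$ within the return time of its end. Controlling these needs a relative bound on exits in short windows, again beyond large-deviation precision.

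The paper avoids all return-to-equilibrium analysis by choosing the scales the other way round:
\begin{itemize}
\item $\deltaK=[K^2]$;
\item $\MK=2^{[\alpha_AK]}$, with $\alpha_A$ so small that $\MK\,\proba_{\laKqsd}(Z_0(A,K)=1)\to0$ (Lemma~\ref{borne:Z0});
\item $\etaK=\JK\deltaK$ exponentially large, fixed by $\proba_{\laKqsd}(\TK_A\le \JK\deltaK)\le 1/\MK$.
\end{itemize}
The adjacent-block term is then handled by deleting the first $\deltaK$ time units of block $1$, at the unconditional cost $O(e^{-\beta_AK})\ll 1/\MK$, and mixing across that gap.

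Finally, your requirement $\proba(X_j=1)\approx\etaK/\esperance_{\laKqsd}(\TK_A)$ is unnecessary once $\MK=[1/p_K]$. In the paper that relation is derived from the Poisson limit (Theorem~\ref{loiTAK}), so invoking it would be circular.
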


We refer to   Subsection \ref{preuve:BD} for a particular choice
of the two quantities $\etaK(A)$ and  $\MK(A)$.\\

A similar result can also be obtained for the process issued from $[K \vx_{0}]$, with $\vx_{0}\in A$  (see Proposition \ref{part:point}).

The other main results of the paper are 
\begin{enumerate}[1)]
\item   convergence of rescaled $\TK_{A}$  to an exponential law (Theorems \ref{loiexpo} and \ref{loiTAK}), 
\item  asymptotics of the  averaged escape time and its comparison with the clumping time scale in relation with the large deviation potential (Theorems  \ref{echelleTAK} and \ref{equivtout}), 
\item asymptotics of the relative  time spent by the process outside $K*A^c$ on large time scales (Theorem \ref{gdechelle}), 
 \item convergence of the normalized record profiles (Theorem \ref{conv:rec}, see Figure \ref{figtout}-bottom).
\end{enumerate}

As was observed by Antonio Galves the trajectory of the process (see Figure \ref{fig1})  is reminiscent of the phenomenon of
metastability.   The exit to the other phase in metastability would correspond to the
extinction of the population taking place on the largest time scale
$1/\lambK$. Before this catastrophic event  the process makes many
unsuccessful large 
deviations attempts with time scales corresponding to how far it
deviates from $\nf$. The number of these rare events is asymptotically Poissonian
distributed in the above sense. Although there are many analogies with
metastability (see for example \cite{CGOV}, \cite{G2}, \cite{CGE},
\cite{FS1}, \cite{FS2}, \cite{La}, \cite{MS}, \cite{GS}), there are also main differencies since the process is neither stationary nor conservative and a \qsd is not an invariant measure (the unique invariant ergodic probability measure is the Dirac mass at origin), implying a non-standard form of the decay of correlations. For these reasons we cannot apply directly known results on metastability and exponential law for rare events (as for example \cite{IMD}).

The rest of the paper is organized as follows.    In Section \ref{BD}, we recall some properties of the \qsd and draw some consequences on the  decay of correlations. We also choose  $\etaK(A)$ and  $\MK(A)$  based on large deviations results also developed in this section. In Section \ref{q-processus} (Theorems \ref{phimelange} and \ref{thm:mixing}), we introduce the q-process and show that it is $\phi$-mixing (see \cite{bradley} for definition). This  allows (using \cite{LChen}) to obtain the convergence in law (under the distribution of the  stationary q-process) of the number of clumping events on the time intervals $[0, \, \etaK(A)\, \MK(A)s]$  to a Poisson process of parameter $s$. The main result, Theorem \ref{MainBD}, follows by a comparison between the stationary q-process and the process started in the \qsd (see Lemma \ref{connect}, Formula \eqref{distancetv}). In Section \ref{sec:4}, we study the asymptotics of $\TK(A)$ suitably normalized and prove the convergence to an  exponential law.  In Section \ref{sec:5}, we consider the trajectories on the different exponential time scales. This is  summarized by the record 
process and its convergence. 

Our large deviations results are based on the ones of Shwartz and Weiss  \cite{SW}, see also Kratz and Pardoux \cite{KP}. However we had to deal with two difficulties. The first one is that our vector fields are not uniformly Lipschitz continuous neither bounded. The  needed results of Shwartz and Weiss hold nevertheless using  Theorem \ref{lipG}. The second difficulty is Assumption 6.6 in \cite{SW} p.133, that we don't know how to check in our case. However one can check that this assumption is not used  in the proof of Theorem 6.17 (i) p.136 in \cite{SW}
which the only part we use. 
Another difficulty was that one cannot apply directly the large
deviations results of \cite{SW} to the q-process because the rates in
the q-process do not have the adequate functional dependence in $K$.

\subsection{Standing assumptions ${\bf (HV)}$}\label{hypo}

For $\vx\in \rpd$, we use the following standard norms:
\[
\|\vx \|_{1}=\sum_{j=1}^{d} x_{j}\,, \; 
\| \vx \|=\sqrt{\sum_{j=1}^{d} x_{j}^{2}}.
\]

We will assume once for all the following hypotheses.

\begin{description}
\item \label{H1} (HV1) - The vectors fields $\vB$ and $\vD$ vanish only at $\underline 0$. 
\item \label{H2} (HV2) - There exists $\xf$ belonging to the interior of $\rpd$ \textup{(fixed point of $\vX$)} such that
\[
\vB(\xf)-\vD(\xf)=\vX(\xf)= {0}\,.
\]
\item\label{H3} (HV3) -  Attracting fixed point: there exist $\beta>0$ and $R>0$ such that $\| \xf  \|<R$, and for all $\vx\in \rpd$ with $ \| \vx \|<R$,
\begin{equation*}
\langle\, \vX(\vx), (\vx-\xf)\rangle \le -\beta\,  \| \vx \|\,   \|\vx-\xf\, \|^2.
\end{equation*}
\item\label{H4} (HV4) -  The fixed point $\underline 0$ of the vector field $\vX$ is repelling (locally unstable).
Moreover, on the boundary of $\rpd$, the vector field $\vX$ points toward the interior (except at $\underline 0$).
\item\label{H5} (HV5) - Define
\[
\widehat{B}(y)=\sup_{\| \vx\|_{1}=y}\sum_{j=1}^{d}B_{j}(\vx)\,,\;  \widehat{D}(y)=
\inf_{\| \vx\|_{1}=y}\sum_{j=1}^{d} D_{j}(\vx)
\]
and for $y>0$, let
\[
F(y)= \frac{\widehat{B}(y)}{\widehat{D}(y)}.
\]

We assume that there exists $\,0<L<R\,$ such that $\sup_{y>L} F(y)<1/2$ and $\lim_{y\to+\infty} F(y)=0.$
\item\label{H6} (HV6) - There exists $y_{0}>0$ such that $\int_{y_{0}}^{\infty} \widehat{D}(y)^{-1}d y<+\infty$ and $y\mapsto \widehat{D}(y)$ is 
increasing on $\left[y_{0},+\infty\right[$.
\item (HV7) - 
There exists $\xi>0$ such that
\begin{equation*}
\inf_{\vx\,\in \rpd} \inf_{1\leq\, j\,\leq\, d}\, \frac{D_{j}(\vx)}{\sup_{1\leq\, \ell\, \leq \,d} x_{\ell}} >\xi.
\end{equation*}
\item (HV8) - 
Finally, we assume that
\begin{equation*}\inf_{1\leq\, j\,\leq \, d}\partial_{x_{j}} B_{j}(\underline 0) > 0.
\end{equation*}
(By $\partial_{x_{j}}$ we mean the partial derivative with respect to $x_{j}$.)
\end{description}

Assumptions (HV5) and (HV6) ensure that the time for ``coming down from infinity'' for the 
dynamical system is finite. Together with (HV3), this also implies that $\xf$ is a 
globally attracting stable fixed point on $\rpd\backslash\{\vecz\}$. More comments on these assumptions can be found in
\cite{CCM2}. \\

Note that under these assumptions {\bf (HV)}, and if $A$ is a subset satisfying {\bf (HS)}, then the assumptions $6.1-6.5$ of \cite{SW} p.133 are satisfied. 

\bigskip 

\bigskip 

{\bf Aknowledgments : } Servet Martinez was supported by the Center for Mathematical Modeling ANID Basal
Project FB210005. Sylvie M\'el\'eard was supported by the Chair Mod\'elisation Math\'ematique et Biodiversit\'e of Veolia - Ecole
polytechnique - Museum national d'Histoire naturelle - Fondation X. She was   also funded by the European
Union (ERC AdG SINGER, 101054787). Views and opinions expressed are however those of the author(s)
only and do not necessarily reflect those of the European Union or the European Research Council. Neither
the European Union nor the granting authority can be held responsible for them.

\section{Some useful estimates  for the
birth and death processes.}\label{BD}

\subsection{Previous results on the quasi-stationary distribution}

In Theorem 3.3 in \cite{CCM2} 
we proved that for any $t\ge 0$,
\begin{align}\label{melange}
& \sup_{\vn\in\domq}\left|\, \proba_{\vn}(\vN(t)\in \cdot) -
 \e^{-\lambK\, t} \lauK(\vn)\,
\laKqsd(\cdot)-\big(1- \e^{-\lambK t} \lauK(\vn)\big)
\delta_{\vecz}(\cdot)\right|_{TV}\le  \Gamma(t,K)
\end{align}
where
\begin{equation}\label{borneGamma}
\Gamma(t,K)= C_{1}\left( \e^{-C_{2} \;K} \;e^{-\lambK\,t}
+  \e^{-  \frac{C_{3}}{\log K} t}\right) \;,
\end{equation}
the positive constants $C_{1}$, $C_{2}$ and $C_{3}$  being independent of $K$.

\medskip Let us emphasize  that there are additional terms, with respect to the standard stationary case: the additional term $\big(1- \e^{-\lambK t} \lauK(\vn)\big)
\delta_{\vecz}$ in \eqref{melange}  and the first unusual term $ \e^{-C_{2} \;K} \;e^{-\lambK\,t}$ in the definition of  $\Gamma(t,K)$ (in the stationary case,  $
  \e^{-\lambK\, t} =1= \lauK(\vn)$).

\medskip We also have for some constants $C''>0$ and $1>C'''>0$ independent of $K$ (see \cite{CCM1} and \cite{CCM2}) that 
\begin{equation}\label{supuK}
C'''< \inf_{{\vn}} \lauK(\vn)\le \sup_{{\vn}} \lauK(\vn) \le 1+e^{-C'' \,K}\;, \quad \mathrm{and}\quad \int_{\domq}
\lauK\;d\laKqsd=1\;.
\end{equation}

\medskip 

It follows from \cite{CCM1} or \cite{CCM2} that there exists $C>0$
such that for any $K\ge 1$
\begin{equation}
\label{minoration}
 \laKqsd\big(\{\nf\}\big)\ge \frac{C}{K^{d/2}}\;.
\end{equation}

\subsection{Decorrelation estimates.}

\medskip We first give some estimates on time decorrelation which will be useful later. 

In all what follows, we will denote for any $a,b\in \R_{+}\cup{\infty}$ by  $$\mathcal{F}^K_{a,b} = \sigma(\vN(s), a\le s\le b)$$ the filtration of the process between times $a$ and $b$. 
 
\medskip 
Using the Markov property it easily follows that
if  $0<t'<t''$ and  
${A}\in\mathscr{F}_{t',t''}^{K}$, 
 we have (conditioning by $\mathscr{F}_{0,t'}^{K}$)
$$
\esperance_{\laKqsd}\big(\un_{{A}}\;\un_{\tauex>t''}
\big)=e^{-\lambK\,t'}
\esperance_{\laKqsd}\big(\un_{\theta_{-t'}{A}}\;\un_{\tauex>t''-t'}\big)
$$
where $\theta_{\sbullet}$ denotes the translation in time. This
implies the following lemma.

\begin{lemma}\label{decalgen}
For  $0<t'<t''$, and  
${A}\in\mathscr{F}_{t',t''}^{K}$ we have
$$
\left|\esperance_{\laKqsd}\big(\un_{{A}}
\big)-e^{-\lambK\,t'}
\esperance_{\laKqsd}\big(\un_{\theta_{-t'}{A}}\big)\right|
\le
2\,\lambK\,t''\;.
$$
\end{lemma}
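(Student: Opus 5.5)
The plan is to start from the identity displayed just before Lemma \ref{decalgen}:
$$
\esperance_{\laKqsd}\big(\un_{{A}}\;\un_{\tauex>t''}\big)=e^{-\lambK\,t'}\,
\esperance_{\laKqsd}\big(\un_{\theta_{-t'}{A}}\;\un_{\tauex>t''-t'}\big),
$$
which follows from the Markov property at time $t'$ together with \eqref{invqsd}. On $\{\tauex>t'\}$, the law of $\vN(t')$ under $\proba_{\laKqsd}$ is $e^{-\lambK t'}\laKqsd$. The event $A\cap\{\tauex>t''\}$ depends only on the trajectory after $t'$, and it is contained in $\{\tauex>t'\}$. I would then remove the extinction indicators on each side, paying the probability of extinction before the relevant time.

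Left-hand side: we have $\big|\esperance_{\laKqsd}(\un_A)-\esperance_{\laKqsd}(\un_A\un_{\tauex>t''})\big|\le \proba_{\laKqsd}(\tauex\le t'')$. By \eqref{expqsd} this equals $1-e^{-\lambK t''}\le \lambK t''$.

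Right-hand side: $\theta_{-t'}A\in\mathscr{F}^K_{0,t''-t'}$, so similarly
$$
\big|\esperance_{\laKqsd}(\un_{\theta_{-t'}A})-\esperance_{\laKqsd}(\un_{\theta_{-t'}A}\un_{\tauex>t''-t'})\big|\le 1-e^{-\lambK(t''-t')}\le\lambK t''.
$$
Multiplying by $e^{-\lambK t'}\le1$ keeps this bound. The triangle inequality then gives the total bound $2\lambK t''$.

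The only delicate point is the meaning of $\un_A$ on trajectories that are absorbed at $\vecz$ before $t''$. Since $\vecz$ is absorbing, the path stays at $\vecz$ after extinction, so $A$ is still a well-defined event in $\mathscr{F}^K_{t',t''}$, and the indicator splitting above is legitimate. I would also check that the shifted event $\theta_{-t'}A$ is measurable with respect to $\mathscr{F}^K_{0,t''-t'}$, which is immediate from the definition of the time translation. Beyond these bookkeeping remarks, no step looks like an obstacle.
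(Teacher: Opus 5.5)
Your proposal is correct and follows essentially the same route as the paper: start from the identity $\esperance_{\laKqsd}(\un_{A}\un_{\tauex>t''})=e^{-\lambK t'}\esperance_{\laKqsd}(\un_{\theta_{-t'}A}\un_{\tauex>t''-t'})$, then remove the extinction indicators. The costs are $\proba_{\laKqsd}(\tauex\le t'')$ and $e^{-\lambK t'}\proba_{\laKqsd}(\tauex\le t''-t')$, each bounded by $\lambK t''$ via \eqref{expqsd}. Your direct bound $1-e^{-\lambK(t''-t')}\le \lambK t''$ is slightly cleaner than the paper's detour through a factor $e^{\lambK t'}$.
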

\begin{proof}
We have using \eqref{expqsd}
\begin{eqnarray*}
&&\left|\esperance_{\laKqsd}\big(\un_{{A}}
\big)-e^{-\lambK\,t'}
\esperance_{\laKqsd}\big(\un_{\theta_{-t'}{A}}\big)\right|\\
&& \le \left|\esperance_{\laKqsd}\big(\un_{{A}}
\un_{\tauex\le t''}\big) + \esperance_{\laKqsd}\big(\un_{{A}}\un_{\tauex> t''}\big)-e^{-\lambK\,t'}
\esperance_{\laKqsd}\big(\un_{\theta_{-t'}{A}}\big)\right| \\
&&\le  \proba_{\laKqsd}\big(\tauex\le  t''\big) +  \left|e^{-\lambK\,t'}
\esperance_{\laKqsd}\big(\un_{\theta_{-t'}{A}}\;\un_{\tauex>t''-t'}\big)-e^{-\lambK\,t'}
\esperance_{\laKqsd}\big(\un_{\theta_{-t'}{A}}\big)\right| \\
&&\le \proba_{\laKqsd}\big(\tauex\le  t''\big) + e^{-\lambK\,t'}  \proba_{\laKqsd}\big(\tauex\le  t''-t'\big)\\
&&\le  \proba_{\laKqsd}\big(\tauex\le  t''\big) + e^{-\lambK\,t'} e^{\lambK t'}  \proba_{\laKqsd}\big(\tauex\le t''\big) \\
&& \le 2  \proba_{\laKqsd}\big(\tauex\le  t''\big)  = 2(1-e^{-\lambK t''})
\le 2 \lambK t''.
\end{eqnarray*}
\end{proof}

\begin{theorem}\label{RBD}
Under the hypotheses {\bf (HV)},
 we have
{\small \begin{eqnarray*}
&{}& \sup_{\substack{t_{2}>t_{1}+s>t_{1}>0\\A_{1}\in \mathscr{F}^{K}_{0,t_{1}}\,;\,
A_{2}\in\mathscr{F}_{ t_{1}+s,t_{2}}^{K}}}\frac{
\left|\esperance_{\laKqsd}\big(\un_{{A}_{1}}\;\un_{{A}_{2}}\;\un_{\tauex>t_{2}}
\big)-e^{-\lambK\,s}
\esperance_{\laKqsd}\big(\un_{{A}_{1}}\,
\lauK\big(\vN(t_{1})\;\un_{\tauex>t_{1}}\big)\; 
\esperance_{\laKqsd}
\big(\un_{\theta_{-t_{1}-s}{A}_{2}}\;\un_{\tauex>t_{2}-t_{1}-s}\big)\right|}
{\esperance_{\laKqsd}\big(\un_{{A}_{1}}\big)}\\
&{}&
 \hskip 0.8cm \le \Gamma(s,K)\;,
\end{eqnarray*}}
where $ \Gamma(s,K)$ has been defined in \eqref{borneGamma}.
\end{theorem}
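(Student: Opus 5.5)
We condition successively on $\mathscr{F}^{K}_{0,t_1}$ and on $\mathscr{F}^{K}_{0,t_1+s}$, and then apply the mixing estimate \eqref{melange} with time $s$.

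\emph{Step 1: conditioning at time $t_1+s$.} Since $A_2\in\mathscr{F}^{K}_{t_1+s,t_2}$, the Markov property at time $t_1+s$ gives
$$
\esperance_{\laKqsd}\big(\un_{A_1}\un_{A_2}\un_{\tauex>t_2}\big)
=\esperance_{\laKqsd}\Big(\un_{A_1}\un_{\tauex>t_1+s}\, g\big(\vN(t_1+s)\big)\Big),
$$
where
$$
g(\vn)=\esperance_{\vn}\big(\un_{\theta_{-t_1-s}A_2}\un_{\tauex>t_2-t_1-s}\big).
$$
We have $0\le g\le 1$, and we may set $g(\vecz)=0$ since $\vecz$ is absorbing.

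\emph{Step 2: conditioning at time $t_1$.} Since $A_1\in\mathscr{F}^{K}_{0,t_1}$, the Markov property at time $t_1$ yields
$$
\esperance_{\laKqsd}\Big(\un_{A_1}\un_{\tauex>t_1}\,h\big(\vN(t_1)\big)\Big),
\qquad
h(\vm)=\esperance_{\vm}\big(g(\vN(s))\big),
$$
where the event $\{\tauex>s\}$ is automatically encoded because $g(\vecz)=0$.

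Now apply \eqref{melange} at time $s$ to $h(\vm)$. Because $g$ vanishes at $\vecz$, the $\delta_{\vecz}$ term contributes nothing, and we obtain, uniformly in $\vm\in\domq$,
$$
\Big|h(\vm)-e^{-\lambK s}\lauK(\vm)\int g\,d\laKqsd\Big|\le \Gamma(s,K),
$$
using $\|g\|_\infty\le1$. This is the step where the convention for $|\cdot|_{TV}$ matters: we need the bound to be for $\sup_{|f|\le1}$, otherwise a harmless factor $2$ appears that could be absorbed into $C_1$.

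Finally observe that
$$
\int g\,d\laKqsd=\esperance_{\laKqsd}\big(\un_{\theta_{-t_1-s}A_2}\un_{\tauex>t_2-t_1-s}\big),
$$
which is exactly the second factor in the statement.

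\emph{Step 3: assembling.} Substituting the approximation for $h$, the difference in the numerator is bounded by
$$
\esperance_{\laKqsd}\big(\un_{A_1}\un_{\tauex>t_1}\big)\,\Gamma(s,K)\le \esperance_{\laKqsd}(\un_{A_1})\,\Gamma(s,K).
$$
The main term is
$$
e^{-\lambK s}\,\esperance_{\laKqsd}\big(\un_{A_1}\lauK(\vN(t_1))\un_{\tauex>t_1}\big)\,\esperance_{\laKqsd}\big(\un_{\theta_{-t_1-s}A_2}\un_{\tauex>t_2-t_1-s}\big),
$$
which is exactly the subtracted term in the statement. Dividing by $\esperance_{\laKqsd}(\un_{A_1})$ and taking the supremum gives the claim.

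The only genuine content is the use of \eqref{melange}. The main point to verify carefully is that it applies with a function $g$ vanishing at $\vecz$, so the extinction part drops out, and that the supremum over starting points $\vm$ makes the bound uniform, which is what allows it to be integrated against $\un_{A_1}$.
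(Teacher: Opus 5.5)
Your proof is correct and is essentially the paper's argument: the paper likewise uses the Markov property to write the quantity as $\esperance_{\laKqsd}\big(\un_{A_1}\un_{\tauex>t_1}\esperance_{\vN(t_1)}(f(\vN(s)))\big)$ with $f(\vecz)=0$, applies \eqref{melange} uniformly in the starting point, and integrates against $\un_{A_1}\un_{\tauex>t_1}$. Your factor-$2$ worry does not arise: $0\le g\le 1$ and the signed measure in \eqref{melange} has zero total mass, so the bound holds without a factor $2$ under either convention for $|\cdot|_{TV}$.
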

\begin{proof}
We have
$$\esperance_{\laKqsd}\big(\un_{{A}_{1}}\;\un_{{A}_{2}}\;\un_{\tauex>t_{2}}
\big) = \esperance_{\laKqsd}\big(\un_{{A}_{1}}\;\un_{\tauex>t_{1}}\;  \esperance_{\vN(t_{1})}\big(\un_{\tauex>s} f(\vN(s)))
\big)
$$
where
$$f(\vN(s)) = \esperance_{\vN(s)}
\big(\un_{\theta_{-t_{1}-s}{A}_{2}}\;\un_{\tauex>t_{2}-t_{1}-s}\big).$$
Note that $f(0)= 0$. Hence 
$$ \esperance_{\vN(t_{1})}\big(\un_{\tauex>s}\; f(\vN(s))
\big) =  \esperance_{\vN(t_{1})}\big( f(\vN(s))
\big).$$
By the estimate \eqref{melange}, we deduce that
$$
\left| \esperance_{\vN(t_{1})}\big( f(\vN(s))
\big) - e^{-\lambK s} \lauK(\vN(t_{1}))\;\laKqsd(f)\right| \leq
\Gamma(s,K)\; .
$$
Then  integrating with respect to 
$\esperance_{\laKqsd}\big(\un_{{A}_{1}}\;\un_{\tauex>t_{1}} \sbullet)$, we obtain

\begin{align*}
&\left|\esperance_{\laKqsd}\big(\un_{{A}_{1}}\;\un_{{A}_{2}}\;\un_{\tauex>t_{2}}
\big) -e^{-\lambK\,s}
\esperance_{\laKqsd}\big(\un_{{A}_{1}}\,
\lauK(\vN(t_{1}))\;\un_{\tauex>t_{1}}\big)\; 
\esperance_{\laKqsd}
\big(\un_{\theta_{-t_{1}-s}{A}_{2}}\;\un_{\tauex>t_{2}-t_{1}-s}\big)\right|\\
&\leq \Gamma(s,K)\;\esperance_{\laKqsd}\big(\un_{{A}_{1}}\;\un_{\tauex>t_{1}}).
\end{align*}
\end{proof}

\subsection{ Large deviation bounds   }

In this section, we are largely inspired by the book of Shwartz and Weiss  \cite{SW} (LDP, Theorem 5.1 and  Kurtz Lemma, Theorem 5.3) and the paper of Kratz-Pardoux  \cite{KP}. Note that in these works, the Large Deviation Principle and the Kurtz Lemma are proved when  the logarithms of the jump rates are assumed to be bounded and globally Lipschitz, which is not our case. Nevertheless, by a coupling argument, we prove   that restricting our process to a compact subset will allow us to apply the above mentioned results in  \cite{SW}.

\medskip 
 Recall that $(\varphi_{s})_{s}$ is the semiflow associated with $\vX$. 
\begin{theorem}
\label{lipG}
Let $A$ and $G$ be open subsets of $\rpd$ satisfying  Assumption {\bf (HS)}  and such that $A \Subset G$.   Then there exists a positive constant $K(A,G)$ such that for any $K\ge K(A,G)$,

(i) there exists a positive constant $ C(A,G)$ with
\begin{align*}
\sup_{{\vn ;\,  d(\frac{\vn}{K},A)\le  {1\over 2}  d(\partial A, \partial G)} }\proba_{\vn} \big(\TK_{G}\le 1 \big) &\le e^{-C(A,G)\,K}\,; \\
\sup_{\vn \in K*A }\proba_{\vn}\big( \vN(1)\in K*A^{c}\big)& \le e^{-C(A,G)\,K} \,;
\end{align*}
(ii) there exist positive constants  $C_{1}(A,G),   C_{2}(A,G),  \zeta(A,G)$ such that  for any $0<\zeta\le \zeta(A,G)$,
$$
\sup_{\vz;\, d(\vz,A)\le {1\over 2}d(\partial A, \partial G) }\proba_{[\vz K]} \Big(\sup_{0\le t \le 1} \left\| \frac{\vN(t)}{K}-\varphi_{t}(\vz)\right\| >\zeta\Big) \le C_{1}(A,G)\,e^{-C_{2}(A,G) \,K\,\zeta}.
$$
\end{theorem}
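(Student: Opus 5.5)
The plan is a localization and coupling argument. The results of \cite{SW} (the LDP, Theorem 5.1, and the Kurtz lemma, Theorem 5.3) need log-rates that are bounded and globally Lipschitz, and our rates $K\vB(\vn/K)$, $K\vD(\vn/K)$ are not. So I will build a modified process whose rates satisfy those hypotheses and which agrees with $\vN$ up to the exit time from a compact neighbourhood of $\overline G$.

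\emph{Step 1: the modified rates.} Let $\delta=d(\partial A,\partial G)$. By \textbf{(HS)}, $\overline G\Subset\rpd$ is compact and lies in the interior of the orthant, and by (HV1) the components $B_j,D_j$ are positive there. So they are bounded below by a positive constant on a compact neighbourhood $V$ of $\overline G$ still inside the interior. Take a smooth cut-off equal to $1$ on $V$. With it, define $\tilde B_j,\tilde D_j$ that coincide with $B_j,D_j$ on $V$, are constant (positive) outside a slightly larger compact set, and are smooth in between. Then $\log\tilde B_j$ and $\log\tilde D_j$ are bounded and globally Lipschitz. Let $\tilde{\underline N}^{(K)}$ be the corresponding process and $\tilde\varphi$ its flow. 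I couple the two processes through the same Poisson random measures, so that they coincide up to the first exit time of $V$. Because $G\subset V$, every event in (i) and (ii) that concerns the behaviour before $\TK_G$, or before leaving a neighbourhood of the deterministic orbit, has the same probability for both processes. Likewise $\varphi_t(\vz)=\tilde\varphi_t(\vz)$ for $t\le1$ as long as the orbit stays in $V$. This holds because $d(\vz,A)\le\delta/2$ and $\overline G$ is forward invariant by \textbf{(HS)}, so the orbit stays in $\overline G$, possibly after enlarging $G$ slightly by $\delta/2$ in the definition of $V$.

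\emph{Step 2: proof of (ii).} Apply the Kurtz lemma (Theorem 5.3 of \cite{SW}) to $\tilde{\underline N}^{(K)}$. It gives $\proba_{[\vz K]}(\sup_{t\le1}\|\tilde{\underline N}^{(K)}(t)/K-\tilde\varphi_t(\vz)\|>\zeta)\le C_1e^{-C_2K\zeta}$ uniformly in $\vz$, for $\zeta\le\zeta(A,G)$. The initial discretization error $\|[\vz K]/K-\vz\|\le\sqrt d/K$ is absorbed for $K\ge K(A,G)$ by Gronwall applied to the Lipschitz field. Choose $\zeta(A,G)$ smaller than the distance from $\overline G$ to $V^c$. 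Then on the complementary event the modified process never leaves $V$ before time $1$, so it coincides with $\vN$, and (ii) follows. The only real work here is checking that the constants in \cite{SW} depend only on the bounds and Lipschitz constants of the log-rates, hence only on $A,G$.

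\emph{Step 3: proof of (i).} For the first bound, observe the following. Starting from $\vz$ with $d(\vz,A)\le\delta/2$, the orbit $\varphi_t(\vz)$, $t\le1$, stays in the closed $\delta/2$-neighbourhood of $\overline A$ only if $\overline A$ is invariant and that neighbourhood is flow-compatible, which is not automatic. The cleaner route is to use that the orbit stays in a compact $\overline{A'}\Subset G$. Here $A'$ is the $\delta/2$-neighbourhood of $A$, its image under $\varphi_{[0,1]}$ is a compact subset of $G$ by continuity, and its distance $\rho>0$ to $\partial G$ depends only on $A,G$. Then $\{\TK_G\le1\}\subset\{\sup_{t\le1}\|\vN(t)/K-\varphi_t(\vz)\|>\min(\rho,\zeta(A,G))\}$, and (ii) gives the bound $e^{-CK}$ after shrinking the constant to swallow $C_1$. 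For the second bound, pick $\vn\in K*A$. Since $A$ is open and invariant and $\partial A$ is transversal, $\varphi_1(\overline A)$ is a compact subset of $A$. I expect this to be the main obstacle. It follows from the strict inward flux $\inf_{\partial A}\langle\vX,\vn^A\rangle>0$: points on $\partial A$ enter $A$ immediately and cannot return to $\partial A$, since any return would have to cross it outward. Hence its distance $\rho'$ to $A^c$ is positive. Then $\{\vN(1)/K\notin A\}$ forces a deviation larger than $\rho'$ from the orbit, and (ii) again gives an exponential bound. Finally, take $C(A,G)$ to be the minimum of the constants obtained.
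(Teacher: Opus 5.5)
Your plan follows the paper's proof. You cut the rates off to bounded ones with bounded Lipschitz logarithms that agree with $\vB,\vD$ near $\overline G$, couple through the same Poisson processes, and apply Kurtz's lemma to the modified process. You then turn exit events into deviations from the flow, using the margin to $\partial G$ and $d(\varphi_{1}(\overline A),A^{c})>0$.

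The differences are minor or in your favour. The paper proves (i) first at the fixed threshold $\varepsilon_{A,G}$ and gets (ii) by splitting on $\{\TK_{G}\le 1\}$; you prove (ii) first and read off (i). Your margin $\rho=d\big(\varphi_{[0,1]}(\overline{A'}),\partial G\big)$ is also more careful than the paper's $\tfrac12 d(\partial A,\partial G)$. The latter tacitly assumes that orbits started $\delta/2$-close to $A$ stay $\delta/2$ away from $\partial G$ up to time $1$.

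The step that fails is in Step 2, where you assert that Kurtz's lemma gives $C_{1}e^{-C_{2}K\zeta}$ uniformly in $0<\zeta\le\zeta(A,G)$ and $K\ge K(A,G)$. That bound is false.
\begin{itemize}
\item Start from $\nf$, so $\vz=\xf$ and $\varphi_{t}(\vz)=\xf$. In unit time the process makes order $K$ jumps, and $\vN(1)-\nf$ fluctuates on the scale $\sqrt{K}$ (functional CLT, Ornstein--Uhlenbeck limit). So for $\zeta=K^{-1/2}$ the probability in (ii) stays bounded away from $0$, while $C_{1}e^{-C_{2}\sqrt{K}}\to 0$.
\item Letting $K(A,G)$ depend on $\zeta$ does not rescue it. By the large deviation lower bound the probability is at least $e^{-K(I(\zeta)+o(1))}$ with $I(\zeta)=O(\zeta^{2})$, since the straight path from $\xf$ to $\xf+2\zeta\ve_{1}$ in unit time has cost $O(\zeta^{2})$. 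This exceeds $C_{1}e^{-C_{2}K\zeta}$ once $\zeta$ is small and $K$ is large.
\end{itemize}
What Kurtz's lemma actually yields is, for each fixed $\zeta\in(0,\zeta(A,G)]$, a bound $C_{1}e^{-K\,c(\zeta)}$ with $c(\zeta)>0$, of order $\zeta^{2}$ as $\zeta\to0$, valid for $K\ge K(A,G,\zeta)$. Only for $\zeta$ bounded below is your Gronwall absorption of the initial $\sqrt{d}/K$ error legitimate.

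The paper's ``the result follows as above'' has exactly the same defect. So (ii) in its stated linear form cannot be obtained by your route or by theirs. Nothing essential is lost, though. Your Step 3 invokes (ii) only at the two fixed thresholds $\min(\rho,\zeta(A,G))$ and $\min(\rho',\zeta(A,G))$, so your proof of (i) stands with the corrected (ii). Likewise, the appendix only needs $I^{*}(S_{T_{*},\zeta_{**}})>0$ at one fixed $\zeta_{**}$.
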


\begin{proof}
In order to estimate 
$$
\proba_{\vn} \big(\TK_{G}\le 1 \big)
$$
for $\vn$ such that $d(\frac{\vn}{K},A)\le  {1\over 2}  d(\partial A, \partial G)$, 
 we have to first replace the rates $\vB$ and $\vD$ by
bounded smooth ones. 

Le $\psi_{G}$ be a smooth function with values in $[0,1]$ such that
$$
\psi_{G}(\vx)=\begin{cases}
1& \mathrm{if} \;\vx\in G\;,\\
0& \mathrm{if}\; d(\vx,\bar G) \ge  {1\over 2}d(\bar G, \,\partial \rpd)\;.\\
\end{cases}
$$

We now define new birth and death rates by
$\vB^G(\vx)=\psi_{G}(\vx)\,\vB(\vx)+(1-\psi_{G}(\vx)) \vun$ and
$\vD^G(\vx)=\psi_{G}(\vx)\,\vD(\vx)+(1-\psi_{G}(\vx)) \vun$. These rates are bounded and
uniformly Lipschitz. We denote by $\vM$ the associated birth
and death process. We now construct a coupling between $\vN$ and $\vM$
as follows. Recall that (see for example \cite{KP})
$$
\vN(t)=\vN(0)+
\sum_{j=1}^{d}P_{j}\left(\int_{0}^{t}K\,B_{j}\big(\vN(s)/K\big)\;ds\right)
-\sum_{j=1}^{d}P_{j+d}\left(\int_{0}^{t}K\,D_{j}\big(\vN(s)/K\big)\;ds\right)\;,
$$ 
where $(P_{j}(\vsbullet))_{{1\le j\le 2d}}$ are $2d$
 independent standard Poisson processes. The
coupling between $\vN$  and $\vM$ is realized by using the same
Poisson processes, namely
$$
\vM(t)=\vM(0)+
\sum_{j=1}^{d}P_{j}\left(\int_{0}^{t}K\, \vB^G_{j}\big(\vM(s)/K\big)
\;ds\right)
-\sum_{j=1}^{d}P_{j+d}\left(\int_{0}^{t}K\,\vD^G_{j}\big(\vM(s)/K\big)
\;ds\right)\;.
$$
If $\vN(0)=\vM(0)$, the two coupled processes coincide at least until
they exit $K*G$. We have for $\vn$ such that $d(\frac{\vn}{K},A)\le  {1\over 2}  d(\partial A, \partial G)$,
$$
\proba_{\vn}\big(T^K_{G} \le 1\big)
=\proba_{\vn}\big(\exists \,s\in(0,\,1]\,,\;
\vM(s)\notin  K*G\big)\;.
$$
Let
$$
\varepsilon_{A,G}=
\min\big\{  {1\over 2}  d(\partial A, \partial G),\;d(\overline{\varphi_{1}(A)}, A^{c})\big\}\;.
$$
It is easy to verify that $\varepsilon_{A,G}>0$. 
We then have from the definition of $\varepsilon_{A,G}$ and for any $\vn$ such that $d(\frac{\vn}{K},A)\le  {1\over 2}  d(\partial A, \partial G)$, that
$$
\proba_{\vn}\big(\exists \,s\in(0,\,1]\,,\;
\vM(s)\in K*G^c\big)\le \proba_{\vn}\Big(\sup_{s\in[0,1]}
\Big\|\frac{1}{K} \vM(s)-\varphi_{s}\Big(\frac{\vn}{K}\Big)\Big\|\ge \varepsilon_{A,G}\Big)\;.
$$
Applying Kurtz's Lemma  (see \cite{SW} Theorem 5.3) to the process $\vM/K$  we deduce
that there exits a constant $C'(A,G)>0$ independent of $K$ such that
for  $K$ large enough and any $\vn $  such that $d(\frac{\vn}{K},A)\le  {1\over 2}  d(\partial A, \partial G)$, 
$$
\proba_{\vn}\Big(\sup_{s\in[0,1]}
\Big\|\frac{1}{K} \vM(s)-\varphi_{s}\Big(\frac{\vn}{K}\Big)\Big\|\ge
 \varepsilon_{A,G}\Big)\le e^{-K\,C'(A,G)}\;.
$$ 
This implies that
for  $K$ large enough,
\begin{equation}
\label{eq:tau}
\sup_{\vn;\,d(\frac{\vn}{K},A)\le  {1\over 2}  d(\partial A, \partial G)}\proba_{\vn}\big(\TK_{G}\le 1\big) \le e^{-K\,C'_{W}}\;.
\end{equation}
Furthermore, for $\vn \in K*A$,
\begin{align*}
\proba_{\vn}\big( \vN(1)\in K*A^{c}\big) 
&\le \proba_{\vn}\big(\TK_{G}\le 1\big) + \proba_{\vn}\big( \vN(1)\in K*A^{c} ; \TK_{G}> 1\big)\\
&\le \proba_{\vn}\big(\TK_{G}\le 1\big) + \proba_{\vn}\big( \vM(1)\in K*A^{c}\big).
\end{align*}
Point (i) follows since for $\vn \in K*A$, 
$$ \proba_{\vn}\big( \vM(1)\in K*A^{c}\big) \le \proba_{\vn}\Big(\Big|\frac{1}{K} \vM(1)-\varphi_{1}\Big(\frac{\vn}{K}\Big)\Big|\ge
 \varepsilon_{A,G}\Big)\le e^{-K\,C'(A,G)}.$$

Let us now prove (ii) in a similar way. For $\vz$ such that $d(\vz,A)\le {1\over 2}\,d(\partial A, \partial G)$ , we have

\begin{align*}
&\proba_{[\vz K]} \left(\sup_{0\le t \le 1} \left
\| \frac{\vN(t)}{K}-\varphi_{t}(\vz)\right\| >\zeta\right)\\
&\le \proba_{[\vz K]}\big(\TK_{G}\le 1\big)  
+ \proba_{[\vz K]} \left(\sup_{0\le t \le 1}\left\| 
\frac{\vN(t)}{K}-\varphi_{t}(\vz)\right\| >\zeta\,;\, \TK_{G} >1\right)\\
&\le \proba_{[\vz K]}\big(\TK_{G}\le 1\big)  
+ \proba_{[\vz K]} \left(\sup_{0\le t \le 1} \left\| 
\frac{\vM(t)}{K}-\varphi_{t}(\vz)\right\| >\zeta\,;\, \TK_{G} >1\right).
\end{align*}
The result follows as above.
\end{proof}

\begin{corollary}\label{iter:lipG}
Under the hypothesis of Theorem \ref{lipG}, for any $K>K(A,G)$ and any
integer $p$
$$
\sup_{\vn\in K*A}\proba_{\vn}\big(\TK_{G}\le p\big) \le p\;e^{-C(A,G)\,K} \;.
$$
\end{corollary}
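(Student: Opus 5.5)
The plan is to iterate the one-step estimates of Theorem \ref{lipG}\,(i) over the unit time intervals $[k,k+1]$, $k=0,\dots,p-1$, using the Markov property at integer times.

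The key point is that at the start of each interval the process must still lie in $K*A$, not merely in $K*G$. This is what the second estimate of Theorem \ref{lipG}\,(i) controls. Note also that $K*A$ is contained in the set $\{\vn:\ d(\vn/K,A)\le \frac12 d(\partial A,\partial G)\}$, so the first estimate applies to every starting point in $K*A$.

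\textbf{Step 1: decomposition.} For $\vn\in K*A$ and $k\in\{0,\dots,p-1\}$, consider the "bad" event for the interval $[k,k+1]$:
$$
\mathcal{B}_k=\Big\{\exists\, t\in[k,k+1]:\ \tfrac{\vN(t)}{K}\notin G\Big\}\ \cup\ \big\{\vN(k+1)\in K*A^c\big\}.
$$
Let $k_0$ be the first index $k$ for which $\mathcal{B}_k$ occurs. On $\{\TK_G\le p\}$ some $\mathcal{B}_k$ with $k\le p-1$ occurs, so $k_0$ is well defined and $\le p-1$. By minimality of $k_0$ and since $\vN(0)=\vn\in K*A$, we have $\vN(k_0)\in K*A$. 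Hence
$$
\{\TK_G\le p\}\subset\bigcup_{k=0}^{p-1}\Big(\{\vN(k)\in K*A\}\cap \mathcal{B}_k\Big).
$$
The state $\vecz$ is absorbing and lies outside $K*A$, so extinction causes no difficulty in this decomposition.

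\textbf{Step 2: Markov property.} Conditioning on $\mathcal{F}^K_{0,k}$ and using time homogeneity, we get
$$
\proba_{\vn}\big(\vN(k)\in K*A,\ \mathcal{B}_k\big)\le \sup_{\vm\in K*A}\Big(\proba_{\vm}(\TK_G\le 1)+\proba_{\vm}\big(\vN(1)\in K*A^c\big)\Big).
$$

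\textbf{Step 3: one-step bound and summation.} By Theorem \ref{lipG}\,(i), each of the two terms on the right is at most $e^{-C(A,G)K}$ for $K\ge K(A,G)$. This gives a one-step bound of $2e^{-C(A,G)K}$. Summing over $k$ yields $2p\,e^{-C(A,G)K}$.

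To reach the stated form $p\,e^{-C(A,G)K}$, I would reduce the constant $C(A,G)$ slightly and enlarge $K(A,G)$ so that $2e^{-C'K}\le e^{-CK}$; since the corollary is stated up to the choice of these constants, this is harmless. Alternatively, one can follow the proof of Theorem \ref{lipG} directly: the union $\{\TK_G\le1\}\cup\{\vN(1)\in K*A^c\}$ is contained in $\{\sup_{s\le1}\|\vM(s)/K-\varphi_s(\vn/K)\|\ge\varepsilon_{A,G}\}$ for the coupled process $\vM$. Kurtz's lemma then bounds this union by a single exponential $e^{-KC'(A,G)}$.

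The main obstacle is exactly this bookkeeping: ensuring that at each integer time the process is back in $K*A$, so that Theorem \ref{lipG} can be reapplied. It is resolved by including the event $\{\vN(k+1)\in K*A^c\}$ in the bad event $\mathcal{B}_k$. Apart from that, the argument is a union bound.
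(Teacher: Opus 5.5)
Your proof is correct, and it differs from the paper's in form and in one point of substance.

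The paper argues multiplicatively. By the Markov property at time $1$ it writes $\proba_{\vn}(\TK_G>p)\ge\big(1-\proba_{\vn}(\TK_G\le 1)\big)\inf_{\vm\in K*A}\proba_{\vm}(\TK_G>p-1)$. It iterates this to $\inf_{\vm\in K*A}\proba_{\vm}(\TK_G>p)\ge(1-e^{-C(A,G)K})^p$ and concludes with $1-(1-x)^p\le px$. You use a first-failure decomposition and a union bound instead.

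The substantive point is that the paper's recursion takes the infimum over $K*A$ at the point $\vN(1)$. On $\{\TK_G>1\}$ one only knows $\vN(1)\in K*G$, where $\proba_{\vN(1)}(\TK_G>p-1)$ need not be close to $1$, so that step is not justified as written. Your inclusion of $\{\vN(k+1)\in K*A^c\}$ in $\mathcal{B}_k$, controlled by the second estimate of Theorem \ref{lipG}\,(i), is exactly what is needed to restart from $K*A$ at each integer time. Inserting $\un_{\{\vN(1)\in K*A\}}$ repairs the paper's recursion in the same way. After that repair the two proofs are the same argument, one in product form and one in sum form.

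The resulting factor $2$ is handled correctly by either of your remarks. The second is the cleaner one. For $\vn\in K*A$, both events lie in the single deviation event $\{\sup_{s\le 1}\|\vM(s)/K-\varphi_s(\vn/K)\|\ge\varepsilon_{A,G}\}$ for the coupled process. That gives the bound $p\,e^{-C'(A,G)K}$ directly, with the Kurtz constant from the proof of Theorem \ref{lipG}.
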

\begin{proof}

The case $p=1$ follows immediately from Theorem \ref{lipG}. We now
assume $p>1$. For $\vn\in K*A$, we have by the Markov property
\begin{align*}
\proba_{\vn}\big(\TK_{G}>p\big)=\esperance_{\vn}\big(\un_{\TK_{G}>1}
\esperance_{\vN(1)}\big(\un_{\TK_{G}>p-1}\big)\big)
&\ge
\esperance_{\vn}\big(\un_{\TK_{G}>1}\big)\;
\inf_{\vn\in K*A}\esperance_{\vn}\big(\un_{\TK_{G}}>p-1\big)
\\&
\ge \big(1-\proba_{\vn}\big(\TK_{G}\le 1\big)\big)
\,\inf_{\vn\in K*A}\proba_{\vn}\big(\TK_{G}> p-1\big)\big)\;.
\end{align*}
We conclude recursively that for any $p\ge1$
$$
1\ge \inf_{\vn\in K*A}\proba_{\vn}\big(\TK_{G}>
p\big)\ge \left(1-e^{-C(A,G)\,K}\right)^{p}\;.
$$
This implies
$$
\sup_{\vn\in K*A}\proba_{\vn}\big(\TK_{G}\le p\big) \le
1-\left(1-e^{-C(A,G)\,K}\right)^{p} 
$$
and the result follows. 
\end{proof}

\begin{corollary}\label{iter:Kurtz}
Let $G$ be an open subset of $\rpd$ satisfying Assumption 
{\bf (HS)}. Let $\vx\in G$. Then there exists 
constants $K(\vx,G)>1$ and $C(\vx,G)>0$ such that for any integer $p$
and any $K>K(\vx,G)$
$$
\proba_{[K\,\vx]}\big(\TK_{G}\le p\big) \le p\;e^{-C(\vx,G)\,K}\;.
$$
\end{corollary}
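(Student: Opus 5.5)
The natural route is to reduce to Corollary \ref{iter:lipG}. To do so, we construct an open set $A$ satisfying Assumption {\bf (HS)} with $\vx\in A\Subset G$. Then $[K\vx]\in K*A$ for $K$ large, and the corollary applied to the pair $(A,G)$ gives the bound directly, with $C(\vx,G)=C(A,G)$ and $K(\vx,G)=\max(K(A,G),K_0)$. Here $K_0$ is such that $[K\vx]/K\in A$ for $K>K_0$. This $K_0$ exists since $\|[K\vx]/K-\vx\|\le \sqrt d/K$ and $A$ is an open neighbourhood of $\vx$.

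The main obstacle is constructing $A$: an open, connected, bounded set with regular boundary, $\vX$ strictly entering along $\partial A$, containing both $\xf$ and $\vx$, and with $\bar A\subset G$. I would build it from the flow. Since $G$ is forward invariant and $\xf$ is globally attracting, the forward orbit $\{\varphi_t(\vx),\ t\ge0\}\cup\{\xf\}$ is a compact subset of $G$. One candidate is the sublevel set of a Lyapunov-type function. A cleaner approach takes a small ball $U$ around $\xf$ on which (HV3) gives the strict inward condition, since $\langle \vX(\vy),\xf-\vy\rangle\ge\beta\|\vy\|\|\vy-\xf\|^2>0$ on the sphere. Then set
\[
A=\bigcup_{0\le t\le T}\varphi_t(V)\cup U,
\]
where $V$ is a small ball around $\vx$ inside $G$ and $T$ is chosen so that $\varphi_T(\bar V)\subset U$. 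This set is open, connected, forward invariant and compactly contained in $G$. If $\varphi_t$ extends to a flow near $\bar G$, we can choose $V$ small enough that it stays at positive distance from $\partial G$.

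Its boundary may not be smooth with strictly inward flow, however. To fix this, I would smooth the set via a function $W(\vy)=\int_0^\infty h(\varphi_s(\vy))\,ds$ with $h$ a suitable cutoff, or via the classical converse Lyapunov construction. This produces a smooth function $W$ with $\langle\nabla W,\vX\rangle<0$ away from $\xf$ near the orbit. A regular sublevel set $\{W<c\}$ squeezed between $\bar A$ and $G$ then satisfies (HS).

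If the construction of such an (HS) set proves delicate, the fallback avoids it. We apply directly the argument of Theorem \ref{lipG}(ii) with $\vz=\vx$ over unit time steps. The trajectory $\varphi_t(\vx)$ stays at distance $\ge\delta>0$ from $G^c$ for all $t$, by compactness of its closure in $G$. Theorem \ref{lipG}(ii), after shrinking $\zeta<\delta$, gives that in time $1$ the rescaled process stays within $\zeta$ of the orbit except with probability $e^{-cK}$. Reaching a fixed (HS) ball $U$ around $\xf$ takes finitely many such steps, and from $U$ Corollary \ref{iter:lipG} with $(U,G)$ handles the remaining times. A union bound over at most $p$ unit steps then yields $p\,e^{-C(\vx,G)K}$, after absorbing constants for $K$ large.
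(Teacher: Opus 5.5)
Your reduction is the paper's: construct an open set $A$ satisfying \textbf{(HS)} with $\vx\in A\Subset G$ and apply Corollary \ref{iter:lipG}. You also correctly check that $[K\vx]\in K*A$ for $K$ large, which the paper leaves implicit.

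The difference is in how $A$ is built. The paper shrinks $G$ along the normal bundle of its regular boundary, essentially taking $A=\{\vy\in G:\ d(\vy,\partial G)>\epsilon\}$.
\begin{itemize}
\item For $\epsilon$ small, $\partial A=\{\vy+\epsilon\,{\vn}^{G}_{\vy}:\ \vy\in\partial G\}$ is again regular, with the same inner normal ${\vn}^{G}_{\vy}$.
\item The transversality constant therefore drops by at most $\epsilon\,\mathrm{Lip}(\vX|_{\bar G})$ and stays positive, and connectedness is preserved.
\item Choosing $\epsilon<\min\{d(\vx,\partial G),d(\xf,\partial G)\}$ puts both $\vx$ and $\xf$ in $A$.
\end{itemize}
Nothing about the dynamics beyond \textbf{(HS)} for $G$ is used.

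Your flow-tube-plus-Lyapunov construction can be made to work, but its crux is asserted rather than shown: that some sublevel set $\{W<c\}$ sits between $\bar A$ (or just $\vx$) and $G$. This is not automatic. Take a naive cutoff, say $h\equiv 1$ off a small ball around $\xf$, so that $W$ is the hitting time of that ball. If $G$ is elongated with $\vx$ far from $\xf$, points of $\partial G$ near $\xf$ have smaller $W$ than $\vx$, and then no sublevel set containing $\vx$ stays inside $G$.

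The repair has two parts.
\begin{itemize}
\item Let $h$ vanish exactly on a small forward-invariant ball around $\xf$. Then every $c>0$ is a regular value, since $\langle\nabla W,\vX\rangle=-h<0$ on $\{W=c\}$.
\item Make $h\ge M$ on a collar of width $\epsilon'$ along $\partial G$ that avoids the forward orbit of $\vx$. Orbits started on $\partial G$ spend time at least $\epsilon'/\sup_{\bar G}\|\vX\|$ in that collar, so $\inf_{\partial G}W\to\infty$ as $M\to\infty$, while $W(\vx)$ is unaffected.
\end{itemize}
Monotonicity of $W$ along orbits then gives $\{W\le c\}\subset G$ and connectedness of $\{W<c\}$; with this $W$ the flow tube is superfluous. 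All of this is extra work that the normal-bundle shrink avoids.

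Your fallback also needs care. Theorem \ref{lipG}(ii) cannot be quoted with $\vz=\vx$, because it requires an \textbf{(HS)} set lying within half the boundary gap of $\vx$. You would instead rerun its coupling and Kurtz argument on a fixed interval $[0,T]$, and only then switch to Corollary \ref{iter:lipG} for a small ball around $\xf$.
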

\begin{proof}
Using the normal bundle to the (regular) boundary of $G$, it is easy
to construct an open set $A\Subset G$, satisfying  hypothesis 
{\bf (HS)} with $\vx\in A$. The corollary follows from Corollary  \ref{iter:lipG}.
\end{proof}

\begin{corollary}\label{largedevW}
Let $G\subset \rpd$ be an open subset 
 satisfying Assumption {\bf (HS)}. Then there
exist two constants $C_{G}>0$ and $K_{G}>0$ such that for any $K>K_{G}$
$$
{\laKqsd}\big( K*G^c\big)\le e^{-K\,C_{G}}\;.
$$
\end{corollary}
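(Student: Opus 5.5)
We prove Corollary~\ref{largedevW} by combining the eigen-relation \eqref{invqsd} of the \qsd with the large deviation bound of Theorem~\ref{lipG}(i), using a smaller set $A$ around $\xf$ as a ``core'' which carries almost all of the mass of $\laKqsd$ at time one. The plan is to apply \eqref{invqsd} at $t=1$:
\[
e^{-\lambK}\,\laKqsd\big(K*G^c\big)=\sum_{\vn\in\domq}\laKqsd(\vn)\,\proba_{\vn}\big(\vN(1)\in K*G^c,\ \tauex>1\big).
\]
Since $\lambK=e^{-K\,\Oun}$, the factor $e^{-\lambK}$ is harmless. We then split the sum according to where $\vn$ lies.

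First we fix an open set $A\Subset G$ satisfying {\bf (HS)} with $\xf\in A$; the normal-bundle construction from the proof of Corollary~\ref{iter:Kurtz} gives one. We split $\domq$ into $K*A$ and its complement.
\begin{itemize}
\item For $\vn\in K*A$, Theorem~\ref{lipG}(i) bounds $\proba_{\vn}(\vN(1)\in K*G^c)\le\proba_{\vn}(\TK_G\le1)\le e^{-C(A,G)K}$.
\item For $\vn\notin K*A$, the transition probability is only bounded by one. Their contribution is at most $\laKqsd(K*A^c)$, which is the same kind of quantity we are trying to bound.
\end{itemize}
So a naive split gives $\laKqsd(K*G^c)\lesssim e^{-CK}+\laKqsd(K*A^c)$, which is circular. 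The key extra input must be that the mass outside $K*A$ is itself pushed into $A$ after a bounded time.

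To break the circularity, we use that the flow brings every point of a large compact set into $A$ in bounded time. We also need that ``coming down from infinity'' under (HV5)--(HV6) gives a uniform bound, in the spirit of \cite{CCM1,CCM2}, showing that from any initial state the process reaches the compact region quickly except with probability $e^{-cK}$; alternatively, we cite the known tail estimates on $\laKqsd$ from those papers. Concretely, we choose a time $T$ such that $\varphi_T$ maps a large compact set $\mathcal K$ (which handles $\vn/K$ near $\vecz$ via (HV4), (HV8) and near infinity via (HV5)--(HV6)) into $A$. Applying Theorem~\ref{lipG}(ii) on $\mathcal K$ in unit time steps together with the Markov property shows that
\[
\proba_{\vn}\big(\vN(T)\in K*G^c,\ \tauex>T\big)\le e^{-cK}
\]
uniformly in $\vn$. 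We then apply \eqref{invqsd} at $t=T$ instead of $t=1$, which gives $\laKqsd(K*G^c)\le e^{\lambK T}e^{-cK}$, and this is the claim.

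The main obstacle is precisely this uniformity over initial conditions near $\vecz$ (slow escape, possible extinction) and near infinity. Near $\vecz$ the event $\tauex\le T$ is excluded, but the process may linger at small sizes for a long time; there we would rather use the mass bound near the origin from \cite{CCM1,CCM2}, namely that $\laKqsd(\{\|\vn\|\le\epsilon K\})$ is exponentially small. If that estimate is unavailable in the needed form, we instead run the argument with $\varphi$ started from $\epsilon$-neighbourhoods, using the repelling property (HV4) quantitatively.
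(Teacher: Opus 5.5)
Your reduction through \eqref{invqsd} needs $\proba_{\vn}\big(\vN(T)\in K*G^c,\ \tauex>T\big)\le e^{-cK}$ \emph{uniformly over all} $\vn\in\domq$, because $\laKqsd$ charges every state. For bounded $T$ this is false. At $\vn=\ve_{1}$ the total jump rate is $K\sum_{j}(B_{j}+D_{j})(\ve_{1}/K)=\Oun$, so with probability bounded below independently of $K$ the process does not move before time $T$. It is then alive and in $K*G^c$, since $\bar G$ is at positive distance from $\vecz$. You flag this, but neither remedy closes it. The bound $\laKqsd(\{\|\vn\|\le\epsilon K\})\le e^{-cK}$ is itself a special case of the corollary (for small $\epsilon$ that set lies in $K*G^c$). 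Invoking it, or ``known tail estimates on $\laKqsd$'', therefore presupposes the conclusion. The fallback via the flow cannot handle initial states with $\Oun$ individuals, since there is no fluid limit there. Moreover, Theorem~\ref{lipG} only controls starting points near a set satisfying {\bf (HS)}, i.e.\ compactly inside the interior of $\rpd$. It says nothing on a compact $\mathcal K$ reaching toward $\vecz$, $\partial\rpd$ or infinity. Rescuing your route would need $T$ of order at least $K$, plus a separate quantitative analysis of the escape from the origin and of the descent from infinity, with $e^{-cK}$ errors uniform in the initial state. None of this is supplied.

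The missing idea is that you never need control from every initial state. That uniformity is already built into the mixing estimate \eqref{melange}, which you can evaluate at the single well-behaved point $\nf$ and time $t=[K^{2}]$. Applied to the set $K*G^c\setminus\{\vecz\}$ it gives
\[
e^{-\lambK[K^{2}]}\,\lauK(\nf)\,\laKqsd\big(K*G^c\big)\le\proba_{\nf}\big(\vN([K^{2}])\in K*G^c\big)+C_{1}\big(e^{-C_{2}K}+e^{-C_{3}[K^{2}]/\log K}\big).
\]
Take $A\Subset G$ satisfying {\bf (HS)}, so that $\nf\in K*A$ for $K$ large. Decomposing according to the last integer time $j<[K^{2}]$ with $\vN(j)\in K*A$, the second bound of Theorem~\ref{lipG}(i) gives $\proba_{\nf}(\vN([K^{2}])\in K*G^c)\le K^{2}e^{-C(A,G)K}$. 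Then $\lauK(\nf)\ge C'''$ by \eqref{supuK} and $\lambK K^{2}\to0$ finish the proof. So instead of pushing $\laKqsd$ forward, which forces uniform control over its whole support, the paper recovers $\laKqsd$ from the law of the process started at one point it can control.
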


\begin{proof}
Due to the regularity of $\partial \bar G$, one can show that there exists an open subset $A$ satisfying Assumption {\bf (HS)} and  $A\Subset G$.

For any $\vn\in K*A$ we have
\begin{align*}
&\proba_{\vn}\big(\vN([K^{2}])\in K*G^c\big)\le
\proba_{\vn}\big(\vN([K^{2}])\in K*A^{c}\big)
\\&
\le
\proba_{\vn}\big(\vN([K^{2}]-1)\in
K*A^{c}\big)+\proba_{\vn}\big(\vN([K^{2}]-1)\in K*A,\, 
\vN([K^{2}])\in K*A^{c}\big)
\end{align*}
and iteratively
$$
\proba_{\vn}\big(\vN([K^{2}])\in K*G^c\big)\le
\sum_{j=0}^{[K^{2}]-1}\proba_{\vn}\big(\vN(j)\in K*A,\, 
\vN(j+1)\in K*A^{c}\big)\;.
$$
By the Markov property we have
$$
\proba_{\vn}\big(\vN(j)\in K*A,\, 
\vN(j+1)\in K*A^{c}\big)
$$
$$
=\esperance_{\vn}\big(\un_{K*A}(\vN(j))\;
\proba_{\vN(j)}\big(\vN(1)\in K*A^{c}\big)\big)\;.
$$ 
Then
\begin{equation}
\label{eq:ouf}
\proba_{\vn}\big(\vN([K^{2}])\in K*G^c\big)
\le K^2 \sup_{\vm\in K*A}\proba_{\vm}\big(\vN(1)\in K*A^{c}\big)\,.
\end{equation}

We now apply the estimate  \eqref{melange} with $t=[K^{2}]$ and
$\vn=\nf$.

 We
get, since $\laKqsd(\vecz)=0$,
$$
\left|\, \proba_{\nf}\big(\vN([K^{2}])\in K*G^c\backslash \{\vecz\}\big) -
 \e^{-\lambK\, [K^{2}]} \lauK(\nf)\,
\laKqsd(K*G^c) \right|
\le C_{1}\left( \e^{-C_{2} \;K} 
+  \e^{- \frac{C_{3}}{\log K} [K^{2}]}\right)\;.
$$
This implies,   using \eqref{eq:ouf} and Theorem \ref{lipG} that 
$$
\e^{-\lambK\, [K^{2}]} \lauK(\nf)\, \laKqsd(K*G^c)
\le 2\, K^2\,e^{-K\,C'_{G}}+C_{1}\left( \e^{-C_{2} \;K} 
+  \e^{- \frac{C_{3}}{\log K}   [K^{2}]}\right)\;.
$$
The result follows from Theorem 3.2  and Proposition 7.5 in 
\cite{CCM2} recalled in \eqref{cherlambda} and \eqref{supuK}.

\end{proof}

\begin{corollary}\label{Kurtz}
For any open subset $G$ of $\rpd$ satisfying hypothesis {\bf (HS)},
 there exist 
constants  $D_{1}(G)>0$ and $D_{2}(G)>0$, $K'_{G}>1$ 
such that for any $K>K'_{G}$
$$
\proba_{\laKqsd}\big(\TK_{G}<1)\le D_{1}(G)\,e^{-K\,D_{2}(G)}\;.
$$
\end{corollary}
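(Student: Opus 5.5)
The plan is to split the initial distribution $\laKqsd$ into its mass inside an intermediate set and its mass outside it. Near the equilibrium we use the uniform large deviation bound of Theorem \ref{lipG}; the mass far from the equilibrium is controlled by Corollary \ref{largedevW}.

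\emph{Choice of the intermediate set.} Given $G$ satisfying {\bf (HS)}, first choose an open set $A\Subset G$ which also satisfies {\bf (HS)}. This is possible by the regularity of $\partial G$, exactly as in the proofs of Corollaries \ref{iter:Kurtz} and \ref{largedevW}, for instance by pushing $\partial G$ inward along the normal bundle. Since $\xf\in A$ and $A$ is open, the set $A$ itself satisfies {\bf (HS)}.

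\emph{Splitting the initial distribution.} Write
$$
\proba_{\laKqsd}\big(\TK_{G}<1\big)=\sum_{\vn\in K*A}\laKqsd(\vn)\,\proba_{\vn}\big(\TK_{G}<1\big)+\sum_{\vn\notin K*A}\laKqsd(\vn)\,\proba_{\vn}\big(\TK_{G}<1\big),
$$
where in the second sum we include $\vn$ outside $K*G$, for which $\TK_{G}=0$.

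\emph{First sum.} Every $\vn\in K*A$ satisfies $d(\vn/K,A)=0\le \frac12 d(\partial A,\partial G)$. Theorem \ref{lipG}(i) therefore gives
$$
\proba_{\vn}\big(\TK_{G}\le 1\big)\le e^{-C(A,G)K}
$$
uniformly in such $\vn$, for $K\ge K(A,G)$. The first sum is thus at most $e^{-C(A,G)K}$.

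\emph{Second sum.} Bound each probability by $1$, so the second sum is at most $\laKqsd(K*A^{c})$. Apply Corollary \ref{largedevW} to the set $A$ (which satisfies {\bf (HS)}): for $K>K_{A}$ this gives
$$
\laKqsd(K*A^c)\le e^{-KC_{A}}.
$$

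\emph{Conclusion.} Adding the two bounds, the statement holds with
$$
D_{1}(G)=2,\qquad D_{2}(G)=\min\big(C(A,G),C_{A}\big),\qquad K'_{G}=\max\big(K(A,G),K_{A}\big).
$$

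The only delicate point is the existence of an $A\Subset G$ satisfying {\bf (HS)}, in particular the transversality condition $\inf_{\partial A}\langle \vX,\vn^{A}\rangle>0$. The paper already asserts this in Corollaries \ref{iter:Kurtz} and \ref{largedevW}, so we can take it as given. If needed, it follows by taking the level sets of a regular defining function of $\partial G$ at small inward distance: by continuity the inner normals and $\vX$ barely change, so the uniform positivity on $\partial G$ persists on these nearby level sets.
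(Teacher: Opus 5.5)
Your proof is correct and follows the paper's own argument essentially step for step. The paper also introduces an intermediate open set $A\Subset G$ satisfying \textbf{(HS)}. It bounds the contribution of initial states in $K*A$ by the uniform estimate \eqref{eq:tau} from Theorem \ref{lipG}(i), and bounds $\laKqsd(K*A^{c})$ by Corollary \ref{largedevW} applied to $A$, so the final estimate is the same sum of two exponentials.
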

\begin{proof}
We introduce as in the previous proof an open subset $A$ satisfying Assumption {\bf(HS)} and $A\Subset G$. 
We have
 \begin{align*}
\proba_{\laKqsd}\big(\TK_{G}<1) 
&\le  \proba_{\laKqsd}\big(\TK_{G}<1 ; \vN(0) \in K*A\big) 
+ \proba_{\laKqsd}\big( \vN(0) \notin K*A\big) \\
&\le \sum_{\vn\in K*A}  \laKqsd(\vn)  \proba_{\vn}\big(\TK_{G}<1\big)
 + \laKqsd(K*A^c\big)\\
&\leq \sup_{\vn\in K*A} \proba_{\vn}\big(\TK_{G}<1\big) + \laKqsd(K*A^c\big)\\
&\le e^{-K\,C'_{A^c}} + e^{-K\,C_{A^c}},
\end{align*}
for $K$ large enough, where we have used \eqref{eq:tau} and Corollary
 \ref{largedevW}.

\end{proof}

\medskip 
Consider  an integer $\delta_{K}\ge 1$ which will be chosen later on and an open 
subset $A$ of $\rpd$ with $\xf\in A$.
We introduce the random variable 
$Z_{0}(A,K)$ defined by
\begin{equation}
\label{def:Z0}
Z_{0}(A,\,K)=\begin{cases}1&\;\mathrm{if}\; 
\NK_{t} \in K*A^c,\,\mathrm{for\ some}\; t\in [0,\,
  \delta_{K}[\;,\\
0&\;\mathrm{otherwise}\;.
\end{cases} 
\end{equation}
\begin{lemma}\label{borne:Z0}
Assume $\limsup_{K\to\infty}\log(\delta_{K})/K=0$. Then 
for any open subset $A$ of $\rpd$ satisfying Assumption {\bf (HS)}, 
 there exist $K_{A}>1$ and $\beta_{A}>0$ such that for any $K>K_{A}$  
$$
\proba_{\laKqsd}\big( Z_{0}(A,K)=1\big)\le e^{-\beta_{A}\,K}\;.
$$
\end{lemma}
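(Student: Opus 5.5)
The plan is to cut $[0,\delta_K[$ into $\delta_K$ unit intervals $[j,j+1[$, $j=0,\dots,\delta_K-1$ ($\delta_K$ is an integer). A union bound gives
$$
\proba_{\laKqsd}\big(Z_0(A,K)=1\big)\le \sum_{j=0}^{\delta_K-1}\proba_{\laKqsd}\big(\exists\, t\in[j,j+1[,\ \vN(t)\in K*A^c\big)\;,
$$
and each term will then be moved back to time $0$ using the quasi-stationarity of $\laKqsd$.

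The event $\{\exists\, t\in[j,j+1[,\ \vN(t)\in K*A^c\}$ includes the case where extinction has occurred before time $j$, since $\vecz\in K*A^c$. I would therefore split each term according to whether $\tauex\le j$ or $\tauex>j$. The first part is bounded by $\proba_{\laKqsd}(\tauex\le j)=1-e^{-\lambK j}\le \lambK\,\delta_K$ by \eqref{expqsd}. For the second part, I would apply the Markov property at time $j$: on $\{\tauex>j\}$ the process restarts from $\vN(j)$. By \eqref{invqsd}, the law of $\vN(j)$ restricted to $\{\tauex>j\}$ is $e^{-\lambK j}\laKqsd\le\laKqsd$. Hence the second part is at most
$$
\proba_{\laKqsd}\big(\exists\, t\in[0,1[,\ \vN(t)\in K*A^c\big)=\proba_{\laKqsd}\big(\TK_A<1\big)\le D_1(A)\,e^{-K D_2(A)}
$$
by Corollary \ref{Kurtz}, applied with $G=A$, which satisfies {\bf (HS)}. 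Alternatively, Lemma \ref{decalgen} could be used directly; it yields the same structure, with error $2\lambK (j+1)$.

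Summing over $j$ gives
$$
\proba_{\laKqsd}\big(Z_0(A,K)=1\big)\le \delta_K^2\,\lambK+\delta_K\,D_1(A)\,e^{-K D_2(A)}\;.
$$
Since $\log\delta_K=o(K)$, the second term is at most $e^{-K D_2(A)/2}$ for $K$ large. For the first term, \eqref{cherlambda} says $\lambK=e^{-K\,\Oun}$, which I read as $\lambK\le e^{-cK}$ for some $c>0$; this is the extinction-rate estimate of \cite{CCM2}. Then $\delta_K^2\lambK\le e^{-cK/2}$ for $K$ large. Taking $\beta_A<\min(c,D_2(A))/2$ completes the argument.

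The only delicate point is the treatment of extinction, since $\vecz\in K*A^c$ counts as an exit. This is why the $\lambK$ term is needed, and why the argument needs both the subexponential growth of $\delta_K$ and the exponential smallness of $\lambK$. Everything else is a direct application of quasi-stationarity together with Corollary \ref{Kurtz}.
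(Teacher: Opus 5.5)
Your proof is correct and follows essentially the same route as the paper. You cut $[0,\deltaK[$ into unit intervals, apply a union bound, move each term back to time $0$ by quasi-stationarity, and then use Corollary \ref{Kurtz} and \eqref{cherlambda} to get a bound of the form $D_1(A)\,\deltaK\, e^{-K D_2(A)}+\mathcal{O}\big(\lambK\deltaK^2\big)$. The only difference is the time shift: you do it directly with the Markov property and \eqref{invqsd} after splitting off $\{\tauex\le j\}$, whereas the paper invokes Lemma \ref{decalgen}, which packages the same computation with the slightly cruder error $2\lambK(j+1)$.
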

\begin{proof}
We define   a 
 sequence of Bernoulli random variables $\big(\vartheta_{j}(A,\,K)\big)_{j\in \Z_{+}}$ 
by 
$$
\vartheta_{j}(A,\,K)=\begin{cases}1&\;\mathrm{if}\; 
\NK_{t}\in K*A^c,\,\mathrm{for\ some}\; t\in [j,\,(j+1)[\;,\\
0&\;\mathrm{otherwise}\;.
\end{cases} 
$$
By Bonferroni's inequality we have from the definition of $
Z_{0}(\AK,K)$  that
$$
\proba_{\laKqsd}\big( Z_{0}(A,K)=1\big)\le \sum_{j=0}^{\deltaK}
\proba_{\laKqsd}\big(\vartheta_{j}(A,\,K)=1\big)\;.
$$
By Lemma \ref{decalgen} and Corollary \ref{Kurtz}  we get
$$
\proba_{\laKqsd}\big( Z_{0}(A,K)=1\big)\le D_{1}(A)\,\deltaK
e^{-K\,D_{2}(A)}+2\,\lambK\,\deltaK^2\;.
$$
The result follows from \eqref{cherlambda}. 
\end{proof}

\subsection{ Large deviation potential and Extinction time   }

For an open set  $B\varsubsetneq\rpd$ satisfying 
 hypothesis {\bf (HS)},  let 
$V_{*}(B)$ denote the large deviation
potential for $B$ with respect to $\xf$, namely
$$
V_{*}(B)=\inf_{y\in\partial B}V(\xf,\,y)\;,$$
where as usual (see for example \cite{KP} Section7)
$$V(x,\,y) =\inf_{t>0} \inf\{I_{0}^t(\phi): \phi\in \D([0,t],\R_+^d), \phi(0)=x, \phi(t)=y\},$$
with $I_{0}^t$  the large deviation functional and $\D([0,t],\R_+^d)$ is the
Skorohod space with trajectories restricted to the time set $[0,t]$.

We refer to \cite{SW} p.136  and \cite{KP} Section 7 for the definitions and properties
of $V$ and $V_{*}$.

By convention, we also define $V_{*}(\rpd)$ as
$$V_{*}(\rpd)= \sup_{B \, {\rm satisfying}\  {\bf(HS)}} V_{*}(B).$$

The following result is intuitively obvious but we couldn't find the proof of this statement in the literature. 

\begin{theorem}\label{ordreV}
Let $A$ and  $G$ be  open sets satisfying Hypothesis {\bf (HS)} and 
$ A\Subset G$. Then
$$
0<V_{*}(A)<V_{*}(G)\;.
$$
\end{theorem}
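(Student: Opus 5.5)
Two inequalities are needed: $V_{*}(A)>0$, and strict monotonicity $V_{*}(A)<V_{*}(G)$. For positivity, I will use the local structure of the rate function near $\xf$. The local rate is $L(\vx,\dot\phi)=\sup_{\theta}\big(\langle\theta,\dot\phi\rangle-\sum_j B_j(\vx)(e^{\theta_j}-1)-\sum_j D_j(\vx)(e^{-\theta_j}-1)\big)$. It vanishes exactly when $\dot\phi=\vX(\vx)$ and is locally uniformly strictly convex in $\dot\phi$, since $B_j,D_j$ are bounded below by a positive constant on the compact set $\bar G\subset$ interior of $\rpd$ (by (HV1)). Hence on $\bar G$ there are $c>0$ and $r>0$ such that
\[
L(\vx,\vy)\ge c\,\min\big(\|\vy-\vX(\vx)\|^2,\ r\|\vy-\vX(\vx)\|\big).
\]
Any path from $\xf$ to $\partial A$ must, by (HV3), fight the drift $\vX$, which pushes toward $\xf$ in a ball $B(\xf,\rho)\subset A$. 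Along a path that reaches distance $\rho$ in time $t$, use $\frac{d}{dt}\tfrac12\|\phi-\xf\|^2=\langle\dot\phi-\vX,\phi-\xf\rangle+\langle\vX,\phi-\xf\rangle$ together with (HV3). This shows that $\int\|\dot\phi-\vX(\phi)\|\,\|\phi-\xf\|\,dt$ must be at least of order $\rho^2$, whatever the time. With Cauchy--Schwarz (or the linear branch for large deviations of speed), this gives $I_0^t(\phi)\ge c'(\rho)>0$ uniformly in $t$. Thus $V(\xf,y)\ge c'$ for all $y\in\partial A$.

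For $V_{*}(A)<V_{*}(G)$, I proceed in three steps.
\begin{enumerate}[1)]
\item Take $y\in\partial G$ with $V(\xf,y)$ within $\epsilon$ of $V_{*}(G)$; the infimum is attained up to $\epsilon$ since $\partial G$ is compact and $V(\xf,\cdot)$ is lower semicontinuous. Take a path $\phi$ from $\xf$ to $y$ with $I(\phi)\le V_{*}(G)+\epsilon$.
\item Since $\bar A\subset G$ is compact and disjoint from $\partial G$, the path must cross $\partial A$. Let $\sigma$ be the last time it lies in $\bar A$. Then $\phi|_{[0,\sigma]}$ costs at least $V_{*}(A)$.
\item The remaining piece $\phi|_{[\sigma,t]}$ goes from $\partial A$ to $\partial G$ and must cost at least some $\kappa>0$ independent of $\epsilon$. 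This yields $V_{*}(G)+\epsilon\ge V_{*}(A)+\kappa$, and letting $\epsilon\to0$ gives the strict inequality.
\end{enumerate}

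The main obstacle is the uniform lower bound $\kappa$ in step 3. Here I plan to use Assumption {\bf (HS)} for $A$: the flow $\varphi$ maps $\bar A$ into $A$, and $\langle\vX,{\vn}^A\rangle\ge\alpha>0$ on $\partial A$. By continuity, in a collar $U=\{\vx: d(\vx,\partial A)<\delta\}$ with $\delta<d(\partial A,\partial G)$, the field $\vX$ has a component at least $\alpha/2$ pointing inward, relative to a smooth signed distance function $h$ to $\partial A$ that is regular in the collar. A path crossing the collar from $h=0$ to $h=\delta$ during $[s_1,s_2]$ satisfies
\[
\delta=\int\langle\nabla h,\dot\phi\rangle\le\int\langle\nabla h,\dot\phi-\vX(\phi)\rangle-\tfrac{\alpha}{2}(s_2-s_1).
\]
Hence $\int\|\dot\phi-\vX\|\,ds\ge\delta+\tfrac{\alpha}{2}(s_2-s_1)$. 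Combined with the lower bound on $L$, this gives a cost of at least $\min\big(c\,\delta^2/(s_2-s_1),\,cr\delta\big)$. Optimizing over $s_2-s_1$, the extra term $\tfrac{\alpha}{2}(s_2-s_1)$ prevents the quadratic branch from degenerating for long crossings, so we obtain $\kappa\ge c''\min(\alpha\delta,\delta)>0$.

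Finally, I would check that the Skorohod paths with finite rate are absolutely continuous, which is standard in \cite{SW}. This justifies the pathwise computations above.
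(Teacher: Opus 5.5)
Your proposal is correct. For the key step it takes a genuinely different route from the paper.

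\textbf{The shared skeleton.} Both arguments decompose a near-optimal path from $\xf$ to $\partial G$ in the same way:
\begin{enumerate}
\item an initial piece reaching $\partial A$, which costs at least $V_*(A)$;
\item a tail from the last visit to $\partial A$ until $\partial G$.
\end{enumerate}
Both then need a lower bound on the cost of the tail that does not depend on $\epsilon$.

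\textbf{The paper's route.} The paper obtains this bound indirectly, by contradiction.
\begin{enumerate}
\item The superlinear growth $\ell(\vx,\vy)\ge C_1\|\vy\|\log\|\vy\|$ (Shwartz--Weiss, Lemma 5.17) forces a cheap tail to last at least a fixed time $T_*$.
\item Within time $T_*$, the flow started at any point of $\partial A$ penetrates $A$ by some $\zeta_*>0$. The tail stays in $A^c$, so it deviates from the flow by about $\zeta_*$.
\item The cost of such a deviation is at least $C_2\zeta_*$. This comes from confronting the Shwartz--Weiss large deviation lower bound with the exponential Kurtz-type estimate of Theorem \ref{lipG}(ii).
\end{enumerate}

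\textbf{Your route.} You replace all of this with a direct variational bound. You combine the quadratic--linear lower bound on the Lagrangian with the barrier function $h$, whose derivative along $\vX$ is at most $-\alpha/2$ in the collar. The term $\frac{\alpha}{2}(s_2-s_1)$ makes long crossings expensive by itself, so no lower bound on the crossing time is needed. You get an explicit $\kappa\ge \frac{c\,\delta}{2}\min(\alpha,r)$ with no probabilistic input.

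\textbf{What each buys.} Your argument is self-contained and quantitative, but it uses the explicit form of the local rate function and a $C^{1,1}$ signed distance near $\partial A$. The paper's argument uses only generic properties of $\ell$ and estimates already proved for the process, and needs no distance function.

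\textbf{Positivity.} The paper never writes a separate argument for $V_*(A)>0$. It follows by applying the monotonicity to a small ball $B(\xf,\rho)\Subset A$; such a ball satisfies (HS) by (HV3), since $\langle\vX(\vx),-(\vx-\xf)/\rho\rangle\ge\beta\|\vx\|\rho>0$. Your Step 3 applied to this ball gives $V_*(A)\ge\kappa>0$ immediately.

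If you keep your separate (HV3) argument, make one point explicit.
\begin{itemize}
\item Plain Cauchy--Schwarz on $\int\|\dot\phi-\vX\|\,\|\phi-\xf\|\ge\rho^2/2$ only gives $\int\|\dot\phi-\vX\|^2\ge\rho^2/(4t)$, which degenerates as $t\to\infty$.
\item To get uniformity in time you must keep the dissipative term $-\beta\|\phi\|\,\|\phi-\xf\|^2$. Set $m=\inf_{B(\xf,\rho)}\|\vx\|>0$. That term gives $\beta m\int\|\phi-\xf\|^2\le\int\|\dot\phi-\vX\|\,\|\phi-\xf\|$.
\item With this, Cauchy--Schwarz yields $\int\|\dot\phi-\vX\|^2\ge\beta m\rho^2/2$, independent of $t$ (before adding the linear branch).
\end{itemize}

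\textbf{A minor correction.} The bound $L\ge c\min(\|\vy-\vX\|^2,r\|\vy-\vX\|)$ does not come from positivity of $B_j,D_j$. It comes from \emph{upper} bounds on $B_j,D_j$ over $\bar G$, i.e.\ a bounded Hessian of $\theta\mapsto\sum_j B_j(e^{\theta_j}-1)+D_j(e^{-\theta_j}-1)$ near $\theta=0$.
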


The  proof is postponed to  \ref{appendice1}.\\

The large deviation potential allows to give a bound for the extinction rate. 

 \begin{lemma}\label{lambdaV}
Under Hypotheses {\bf{(HV)}},
$$
\limsup_{K\to\infty}\frac{\log \lambK}{K}\le -V_{*}(\rpd)\;.
$$

\end{lemma}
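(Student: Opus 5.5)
We want $\limsup_{K\to\infty}\frac1K\log\lambK\le -V_*(B)$ for every $B$ satisfying {\bf (HS)}. Taking the supremum over such $B$ then gives the claim, by the definition of $V_{*}(\rpd)$. So fix $B$ satisfying {\bf (HS)} and $\epsilon>0$. The idea is that extinction forces the process, started from the \qsd, to exit $K*B$. By \eqref{expqsd},
$$
1-e^{-\lambK t}=\proba_{\laKqsd}(\tauex\le t)\le \proba_{\laKqsd}\big(\TK_{B}\le t\big)\,,
$$
since $\vecz\notin K*B$ (because $\bar B$ lies in the interior of $\rpd$). It therefore suffices to find a time $t=t_K$ with $\lambK t_K\to 0$ fails impossible, i.e. with $e^{-K(V_*(B)-\epsilon)}t_K\to0$, yet $\proba_{\laKqsd}(\TK_B\le t_K)$ small compared to $\lambK t_K$ unless $\lambK\le e^{-K(V_*(B)-\epsilon)}$. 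Concretely: choose $t_K=e^{K(V_*(B)-\epsilon)}$. If we show $\proba_{\laKqsd}(\TK_{B}\le t_K)\to 0$, then $1-e^{-\lambK t_K}\to0$, hence $\lambK t_K\to0$. This gives $\log\lambK\le -K(V_*(B)-\epsilon)$ for $K$ large, and we conclude by letting $\epsilon\to0$.

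The key step is therefore the lower bound on the exit time from $K*B$ at the large deviation scale, uniform over starting points in a smaller set. I would take an open set $A\Subset B$ with {\bf (HS)}, as in the proof of Corollary \ref{largedevW}. I would split
$$
\proba_{\laKqsd}(\TK_B\le t_K)\le \laKqsd(K*A^c)+\sup_{\vn\in K*A}\proba_{\vn}(\TK_B\le t_K).
$$
The first term is $\le e^{-KC_A}$ by Corollary \ref{largedevW}. For the second, I would invoke the exit-time large deviation estimate of Shwartz--Weiss, Theorem 6.17 (i) in \cite{SW}: for starting points in a compact subset of the basin, $\proba_{\vn}(\TK_B\le e^{K(V_*(B)-\epsilon)})\to 0$. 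As explained in the introduction, this theorem is applicable for our unbounded, non-Lipschitz rates. The way to do so is to replace the rates by the truncated rates $\vB^{G},\vD^{G}$ for some $G\Supset B$ satisfying {\bf (HS)}, using the coupling of Theorem \ref{lipG}. The two processes coincide until exit from $K*G$, hence until exit from $K*B$. So $\TK_B$ has the same law for $\vN$ and $\vM$, and the modified rates satisfy assumptions 6.1--6.5 of \cite{SW}. The large deviation functional, and hence $V_*(B)$, is unchanged, since it depends only on the rates inside $\bar B$ for paths confined to $\bar B$ before their first hitting of $\partial B$.

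The main obstacle is this last point: checking that Theorem 6.17 (i) of \cite{SW} gives the bound uniformly in $\vn\in K*A$ without Assumption 6.6, and that the potential computed with the truncated rates coincides with $V_*(B)$. If uniformity is problematic, I would first use the Kurtz estimate of Theorem \ref{lipG}(ii). It shows that from any $\vn\in K*A$ the process reaches a small neighbourhood of $\nf$ within a fixed time $T_A$, except with probability $e^{-cK}$. I would then apply the Shwartz--Weiss estimate only from starting points near $\xf$, and combine the two bounds by the strong Markov property.
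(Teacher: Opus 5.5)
Your argument is correct, but it follows a different route from the paper's.

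\textbf{The paper's route.} The paper never has to control the bulk of $\laKqsd$. It writes $1/\lambK=\esperance_{\laKqsd}(\tauex)\ge\laKqsd(\{\nf\})\,\esperance_{\nf}(\TK_{B})$ and uses the polynomial lower bound \eqref{minoration} on the atom at $\nf$, which is invisible on the exponential scale. It then only needs the mean exit time from the single point $\nf$. It cites \cite{SW} Theorem 6.17 (ii), but only $\liminf_{K}\frac{1}{K}\log\esperance_{\nf}(\TK_{B})\ge V_{*}(B)$ is used. That bound already follows from 6.17 (i) via $\esperance(T)\ge t\,\proba(T\ge t)$.

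\textbf{Your route.} You combine the exact exponential law \eqref{expqsd} with $\TK_{B}\le\tauex$. This dispenses with \eqref{minoration} and with any statement on expectations, and it even yields $\proba_{\laKqsd}(\tauex\le e^{K(V_{*}(B)-\epsilon)})\to 0$. The price is that you must bound $\proba_{\vn}(\TK_{B}\le t_K)$ for every $\vn$ charged by $\laKqsd$. Hence you need Corollary \ref{largedevW} plus a version of \cite{SW} 6.17 (i) that is uniform over $\vn\in K*A$.

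\textbf{The uniformity issue.} Your fallback through Theorem \ref{lipG}(ii) and the strong Markov property does not remove this requirement. It only shrinks the set of starting points to a neighbourhood of $\xf$, where uniformity is still needed. That is what the Freidlin--Wentzell proof of the lower bound provides, so it is acceptable at the paper's level of rigor. If you want a single starting point instead, use the transfer from the proof of Theorem \ref{echelleTAK}. By \eqref{melange} at time $\deltaK$, together with $\inf\lauK\ge C'''$ and $\lambK\deltaK\to 0$, $\proba_{\laKqsd}(\TK_{B}\le t_K)$ is at most a constant times $\proba_{\nf}(\TK_{B}\le t_K+\deltaK)$ plus an error tending to $0$.
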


\begin{proof}
Recalling \eqref{minoration}, i.e.
$$
\proba_{\laKqsd}\big(\{\nf\})\ge \frac{C}{K^{d/2}}
$$
for some $C>0$ independent of $K$, we have 
$$
\frac{1}{\lambK}=\esperance_{\laKqsd}\big( \tauex\big)\ge
\frac{C}{K^{d/2}}\; \esperance_{\nf}\big( \tauex\big)
\ge \frac{C}{K^{d/2}}\; \esperance_{\nf}\big( \TK_{B}\big)
\;,
$$
for any  subset $B$ of $\rpd$ satisfying {\bf (HS)}, since obviously $ \TK_{B} \leq \tauex$.
The result follows from the large deviation estimates of the exit
time, see for example 
\cite{SW} Theorem 6.17 (ii). Indeed, for any such  $B$ of  $\rpd$, we have  that
$$\limsup_{K\to \infty} \frac{1}{K}\log \esperance_{\nf}\big( \TK_{B}\big) = V_{*}(B).$$
The result follows.
\end{proof}

\begin{remark}
 Using  Remark \ref{1dformule} of  \ref{appendice1} and the precise estimate  for $\lambK$ established in
\cite{CCM1} for $d=1$, we get
$$
\lim_{K\to\infty}\frac{\log \lambK}{K}= -V_{*}(\R_{+})\;.
$$
\end{remark}

\subsection{Estimation of $\laKqsd\big(|1-\lauK\big|\big)$}

\begin{lemma}\label{petit}
Let $\nu$ be a probability measure and $u$ a nonnegative function
such that for some constant $\sigma>0$, $u\le 1+\sigma$. Assume also
$$
\int u\;d\nu=1\;.
$$
Then 
$$
\int \big|1-u\big|\;d\nu\le 2\,\sigma\;.
$$
\end{lemma}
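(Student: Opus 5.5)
The argument splits $\int|1-u|\,d\nu$ according to the sign of $1-u$ and uses the normalisation $\int u\,d\nu=1$ to trade the negative part for the positive part. Set $E=\{u\le 1\}$ and $E^c=\{u>1\}$. Then
$$
\int \big|1-u\big|\,d\nu=\int_{E}(1-u)\,d\nu+\int_{E^c}(u-1)\,d\nu .
$$

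Since $\nu$ is a probability measure and $\int u\,d\nu=1$, we have $\int (1-u)\,d\nu=0$. Splitting this integral over $E$ and $E^c$ gives the identity
$$
\int_{E}(1-u)\,d\nu=\int_{E^c}(u-1)\,d\nu .
$$
Hence
$$
\int|1-u|\,d\nu=2\int_{E^c}(u-1)\,d\nu .
$$

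On $E^c$ the hypothesis $u\le 1+\sigma$ gives $0<u-1\le\sigma$. It follows that
$$
\int_{E^c}(u-1)\,d\nu\le \sigma\,\nu(E^c)\le\sigma .
$$
Combining this with the previous display yields $\int|1-u|\,d\nu\le 2\sigma$.

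There is no real obstacle. The only point needing care is that $u$ is integrable, so that the integrals above are finite. This holds because $0\le u\le 1+\sigma$ and $\nu$ is a probability measure, so the identity $\int(1-u)\,d\nu=0$ is legitimate. Nonnegativity of $u$ is not actually needed beyond ensuring integrability.

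In the application, with $\nu=\laKqsd$ and $u=\lauK$, \eqref{supuK} provides $\sigma=e^{-C''K}$. This gives $\laKqsd\big(|1-\lauK|\big)\le 2e^{-C''K}$.
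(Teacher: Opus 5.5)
Your proof is correct and follows the same route as the paper: split at $\{u\le 1\}$ versus $\{u>1\}$, use $\int u\,d\nu=1$ to show the two parts are equal so that $\int|1-u|\,d\nu=2\int_{\{u>1\}}(u-1)\,d\nu$, and then bound this by $2\sigma\,\nu(\{u>1\})\le 2\sigma$. The paper carries out the same bookkeeping by expanding the four integrals explicitly, and your application with $\sigma=e^{-C''K}$ from \eqref{supuK} is exactly how Corollary~\ref{starstar} is obtained.
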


\begin{proof}
We have
\begin{align*}
\int \big|1-u\big|\;d\nu&=\int_{u\le 1} d\nu-\int_{u\le 1} u\; d\nu
+\int_{u>1} u\; d\nu-\int_{u>1}d\nu
\\&
=1-\int_{u>1}d\nu-1+\int_{u>1} u\; d\nu+\int_{u>1} u\;
d\nu-\int_{u>1}d\nu
\\&
=2\,\int_{u>1} (u-1)\; d\nu\le2\, \sigma\,\nu\big(\{u>1\}\big)\;,
\end{align*}
and the result follows.
\end{proof}

\begin{corollary}
\label{starstar} For the constant $C''$ given in \eqref{supuK} and $K$ large enough, we have
\begin{equation*}
\int_{\domq}\laKqsd\big(|1-\lauK\big|\big) \le 2 e^{-C''K}.
\end{equation*}

\end{corollary}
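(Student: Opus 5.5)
This is a direct application of Lemma \ref{petit} with $\nu=\laKqsd$, $u=\lauK$ and $\sigma=e^{-C''K}$, so the plan is simply to check that lemma's three hypotheses from \eqref{supuK}.

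First, $\laKqsd$ is a probability measure on $\domq$; this is its definition as the quasi-stationary distribution. Second, $\lauK$ is nonnegative; in fact \eqref{supuK} gives $\lauK\ge C'''>0$. Third, the same estimate \eqref{supuK} gives, for $K$ large enough,
\begin{equation*}
\sup_{\vn}\lauK(\vn)\le 1+e^{-C''K}\quad\text{and}\quad \int_{\domq}\lauK\,d\laKqsd=1 .
\end{equation*}
The bound $u\le 1+\sigma$ and the normalisation $\int u\,d\nu=1$ therefore hold with $\sigma=e^{-C''K}$.

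Lemma \ref{petit} then gives
\begin{equation*}
\int_{\domq}\big|1-\lauK\big|\,d\laKqsd\le 2\,e^{-C''K},
\end{equation*}
which is the claimed inequality. The expression $\int_{\domq}\laKqsd\big(|1-\lauK|\big)$ in the statement is read as this integral.

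There is no real obstacle. The only point to verify is that the normalisation $\int\lauK\,d\laKqsd=1$ holds exactly, with no error term; \eqref{supuK} states it as an identity, so nothing more is needed. The phrase ``for $K$ large enough'' only serves to ensure that the upper bound in \eqref{supuK}, taken from \cite{CCM1} and \cite{CCM2}, is in force.
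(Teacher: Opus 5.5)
Your proposal is correct and is exactly the paper's argument: the paper states this corollary as a direct consequence of Lemma \ref{petit} with $\nu=\laKqsd$, $u=\lauK$ and $\sigma=e^{-C''K}$. The hypotheses are verified from the bounds and the normalisation $\int\lauK\,d\laKqsd=1$ in \eqref{supuK}.
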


This is a direct consequence of Lemma \ref{petit} and \eqref{supuK}.

\subsection{Choice of $\etaK(A)$, $\MK(A)$, $\deltaK$
 and consequences}\label{preuve:BD}

Let $A$ a subset of $\rpd$ satisfying Assumption ${\bf (HS)}$.

 \medskip We need to choose $\etaK(A)$, $\MK(A)$  and $\deltaK$
 adequately.

 This will involve  estimates of $\Gamma(\deltaK, K)$ (see
 Theorem \ref{RBD}), $\proba_{\laKqsd}\big( Z_{0}(A,K)=1\big)$ and 
$\laKqsd\big(|1-\lauK\big|\big)$.
The strategy is to choose first $\deltaK$, 
choose $\MK(A)$, then $\etaK(A)$,
 estimate $\proba_{\laKqsd}\big(X_{0}(A,K)=1\big)$ and verify that 
$\deltaK<\etaK(A)$. At this point, the random variables $X_{j}(A,K)$ and $Z_{0}(A,K)$ will be completely defined. \\

First of all we need to choose $\deltaK$ satisfying the assumption of
Lemma \ref{borne:Z0}, namely
 $$\limsup_{K\to\infty}\log(\delta_{K})/K=0\;.
$$
To fix ideas, we choose once for all 
\begin{equation}\label{choix:deltaK}
\deltaK=\big[K^{2}\big]\;.
\end{equation}
Other choices are possible.

\medskip
Choose  $\alpha_{A}$ such that
\begin{equation}
\label{alpha}
0<\alpha_{A}<\gamma_{A}=\frac{1}{\log 2}\;
\min\left\{ \beta_{A},\,C'', \,\frac{-\log\lambK}{3\,K},\, C_{2},
 \,\frac{V_{*}(\rpd)-V_{*}(A)}{2}\right\}
\end{equation}
with  $\beta_{A}$ defined in Lemma \ref{borne:Z0}, $C''$  defined in 
 \eqref{supuK}  and $C_{2}$ in  \eqref{borneGamma}.

The fact that 
$V_{*}(\rpd)>V_{*}(A)$, and hence $\gamma_{A}>0$ is a consequence of
 Theorem \ref{ordreV}. 

\medskip
We define $\MK(A)$ by
\begin{equation}\label{choix:MKA}
\MK(A)=2^{[\alpha_{A}\,\sK]}\,.
\end{equation}
Note that by Lemma \ref{borne:Z0}, $\MK(A)\, 
\proba_{\laKqsd}\big(Z_{0}(A,K)=1\big)$ tends to
zero exponentially fast when $K$ tends to infinity. We also have
\begin{equation}\label{borne:MKA}
\MK(A)< \min\left\{e^{\beta_{A}\,\sK},\;
  e^{C''\,\sK},\;\lambK^{-1/3}, e^{C_{2}K}, \;e^{\,\sK\,(V_{*}(\rpd)-V_{*}(A))/2}
\right\} \;e^{-\zeta_{A}\,\sK}\;, 
\end{equation}
for some $\zeta_{A}=\gamma_{A} - \alpha_{A} >0$ if $K$ is large enough.

\medskip
We now choose $\etaK(A)$  as
$$
\etaK(A)=\JK\,\deltaK
$$
where $\JK$ is  the largest integer such that
\begin{equation}\label{choix:etaK}
\proba_{ \laKqsd}\big(\TK_{A}\le \JK\,\deltaK\big)=
\proba_{\laKqsd}\big(X_{0}(A,K)=1\big)\le \frac{1}{\MK(A)}\;.
\end{equation}

\begin{lemma}\label{encadre}
We have
$$
\etaK(A)\le e^{K\,(V_{*}(A)+o(1))}\;,
$$
and
\begin{align}
\label{inspiration}
\frac{1}{\MK(A)}\;
\big[1-\MK(A)\proba_{\laKqsd}\big(Z_{0}(A,K)=1\big)-2\,\MK(A) 
\lambK (\etaK(A)+\deltaK)\big]\; &\le \proba_{\laKqsd}\big(X_{0}(A,K)=1\big)\nonumber \\&\le \frac{1}{\MK(A)}\;.
\end{align}
\end{lemma}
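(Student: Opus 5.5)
The right-hand inequality in \eqref{inspiration} holds by the definition \eqref{choix:etaK} of $\JK$, since $X_{0}(A,K)=1$ exactly when $\TK_{A}<\etaK(A)$ up to the boundary convention. The proof therefore splits into the lower bound and the upper bound on $\etaK(A)$. For both, the plan is to exploit the maximality of $\JK$: by definition,
$$
\proba_{\laKqsd}\big(\TK_{A}\le (\JK+1)\,\deltaK\big)>\frac{1}{\MK(A)}\;.
$$

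\textbf{Lower bound.} Split the interval $[0,(\JK+1)\deltaK[$ into $[0,\JK\deltaK[$ and $[\JK\deltaK,(\JK+1)\deltaK[$. This gives
$$
\frac1{\MK(A)}<\proba_{\laKqsd}\big(X_{0}(A,K)=1\big)+\proba_{\laKqsd}\big(\NK_t\in K*A^c \text{ for some } t\in[\etaK(A),\etaK(A)+\deltaK[\big).
$$
The second event belongs to $\mathscr F^{K}_{\etaK(A),\etaK(A)+\deltaK}$. Apply Lemma \ref{decalgen} with $t'=\etaK(A)$ and $t''=\etaK(A)+\deltaK$. It bounds that probability by
$$
e^{-\lambK\etaK(A)}\proba_{\laKqsd}\big(Z_{0}(A,K)=1\big)+2\lambK(\etaK(A)+\deltaK)\le \proba_{\laKqsd}\big(Z_{0}(A,K)=1\big)+2\lambK(\etaK(A)+\deltaK).
$$
Multiplying through by $1/\MK(A)$ and rearranging yields exactly the left inequality of \eqref{inspiration}.

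\textbf{Upper bound on $\etaK(A)$.} I would use the same decorrelation to show that exits happen at rate roughly $e^{-KV_*(A)}$ per unit time. Cut $[0,\etaK(A)[$ into $\JK$ blocks of length $\deltaK$. By the Markov property and the mixing estimate \eqref{melange}, or Theorem \ref{RBD} with a gap, the events of no exit on well-separated blocks are nearly independent. Combining this with $\proba_{\laKqsd}(X_0=1)\le 1/\MK(A)$ gives $\JK\,p_K\lesssim 1/\MK(A)\le1$, where $p_K$ is the probability of exiting in one block.

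A simpler route avoids this. Take $\vz=\xf$ and use \eqref{minoration}, $\laKqsd(\{\nf\})\ge C K^{-d/2}$. Then
$$
\proba_{\nf}\big(\TK_A\le \etaK(A)\big)\le C^{-1}K^{d/2}/\MK(A)\to0 .
$$
Now invoke the large deviation exit-time estimate of \cite{SW} Theorem 6.17(i). For any $\epsilon>0$ it gives $\proba_{\nf}\big(\TK_A\le e^{K(V_*(A)+\epsilon)}\big)\to1$. Since $\MK(A)$ grows exponentially, this probability would stay bounded below if $\etaK(A)$ exceeded $e^{K(V_*(A)+\epsilon)}$, a contradiction. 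Hence $\etaK(A)\le e^{K(V_*(A)+\epsilon)}$ for large $K$, for every $\epsilon$, which is the claim $\etaK(A)\le e^{K(V_*(A)+o(1))}$.

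The main obstacle is this last step. Theorem 6.17(i) of \cite{SW} is stated for bounded Lipschitz rates, so it must be transferred to our process. The transfer would use the coupling with the modified process $\vM$ from the proof of Theorem \ref{lipG}, which coincides with $\vN$ until the exit from $K*G$ for some $G\Supset A$, together with a check that the relevant Shwartz–Weiss hypotheses hold.

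The upper bound on $\etaK(A)$ is also what makes the lower bound meaningful. With $\MK(A)\lambK\le \lambK^{2/3}$ and $V_*(A)<V_*(\rpd)\le -\limsup\log\lambK/K$, one gets $\MK(A)\lambK\etaK(A)\to0$ exponentially. For this one uses the margin $\tfrac{V_*(\rpd)-V_*(A)}2$ built into $\alpha_A$ in \eqref{alpha}, together with Lemma \ref{lambdaV}.
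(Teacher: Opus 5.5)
Your proposal is correct and follows the paper's proof. The lower bound uses the maximality of $\JK$ together with Lemma \ref{decalgen} applied with $t'=\etaK(A)$, $t''=\etaK(A)+\deltaK$, and the bound on $\etaK(A)$ uses \eqref{minoration} plus Theorem 6.17(i) of \cite{SW}, exactly as in the paper; the block-decomposition sketch you mention first is unnecessary, and your remark about transferring Shwartz--Weiss through the coupling of Theorem \ref{lipG} matches how the paper justifies its use of \cite{SW}.
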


\medskip
\begin{proof}
From our choice \eqref{choix:MKA} of $\MK(A)$ we have
$$
\proba_{ \laKqsd}\big(\TK_{A}\le \etaK(A)\big)=
\proba_{\laKqsd}\big(X_{0}(A,K)=1\big)
\le \frac{1}{\MK(A)}\le 2^{-\alpha_{A}\,K+1}\;.
$$
We have also 
$$
\proba_{ \laKqsd}\big(\TK_{A}\le \etaK(A)\big)\ge
  \laKqsd\big(\{\nf\}\big)\;\proba_{ \nf}\big(\TK_{A}\le \etaK(A)\big) \;.
$$

Therefore, using \eqref{minoration}, 
$$
\proba_{ \nf}\big(\TK_{A}\le \etaK(A)\big)\le C^{-1} \,K^{d/2} \;2^{-\alpha_{A}\,K+1}
$$
which implies from \cite{SW} Theorem 6.17 (i)
$$
\etaK(A)\le e^{K\,(V_{*}(A)+o(1))}\;.
$$

\medskip
From the definition of $\JK$ it follows that
$$
\proba_{ \laKqsd}\big(\TK_{A}\le \JK\,\deltaK\big)\le \frac{1}{\MK(A)}
\le \proba_{ \laKqsd}\big(\TK_{A}\le (\JK+1)\,\deltaK\big)\;.
$$
We have
\begin{align*}
&\proba_{\laKqsd}\big(\TK_{A}\le (\JK+1)\,\deltaK\big)
\\&
=\proba_{ \laKqsd}\big(\TK_{A}\le \JK\,\deltaK\big)+
\proba_{\laKqsd}\big(\JK\,\deltaK\le \TK_{A}\le (\JK+1)\,\deltaK\big)\;.
\end{align*}
We have using Lemma \ref{decalgen}
\begin{align*}
&\proba_{\laKqsd}\big(\JK\,\deltaK\le \TK_{A}\le (\JK+1)\,\deltaK\big)\\
&\le \proba_{\laKqsd}\big(\big\{\exists \, t\,; \,\JK\,\deltaK< t\le
(\JK+1)\,\deltaK \, and\, \, \NK_{t}/K\notin A\big\}\big)\\
&\le \proba_{\laKqsd}\big(Z_{0}(A,K)=1\big)+2\, \lambK (\JK+1)\,\deltaK\;.
\end{align*}
\
The lower bound for $\proba_{\laKqsd}\big(X_{0}(A,K)=1\big)$  follows immediately and the second statement holds.
\end{proof}

\begin{remark}
 Note that  by Lemma \ref{lambdaV} and the choices made in \eqref{alpha} and \eqref{choix:MKA}, we have 
 for any $A\subset \rpd$ satisfying Assumption ${\bf (HS)}$ that
\begin{equation}
\label{lambdaeta}
\lim_{K\to \infty}e^{ \lambK\, \etaK(A)} =1
\end{equation}
and 
 \begin{equation}
 \label{asympto}
 \lambK\,  \MK(A) \, \etaK(A) \le e^{-K\Big((V_{*}(\rpd)-V_{*}(A))/2 +o(1)\Big)}.
 \end{equation}
\end{remark}

\medskip
We now deduce from Lemma \ref{lambdaV} that the time interval $\etaK(A)$ involved in the definition of the $X_{j}(A,K)$  is strictly larger than the time interval $\deltaK$ involved in the definition of $Z_{0}(A,K)$.

\begin{lemma}\label{deltaeta}
For any $K$ large enough, we have 
$$
\deltaK<\etaK(A)\;.
$$
\end{lemma}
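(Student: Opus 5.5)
Since $\etaK(A)=\JK\,\deltaK$ with $\JK$ an integer, proving $\deltaK<\etaK(A)$ amounts to proving $\JK\ge 2$ for $K$ large. By the definition \eqref{choix:etaK} of $\JK$ as the largest integer satisfying the inequality there, and because $J\mapsto \proba_{\laKqsd}(\TK_{A}\le J\deltaK)$ is nondecreasing, it suffices to check that $J=2$ is admissible, i.e.
$$
\proba_{\laKqsd}\big(\TK_{A}\le 2\,\deltaK\big)\le \frac{1}{\MK(A)}
$$
for all $K$ large enough.

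To bound the left-hand side, split the interval $[0,2\deltaK]$ into $[0,\deltaK[$ and $[\deltaK,2\deltaK]$. The first piece is the event $\{Z_{0}(A,K)=1\}$ (together with the case that the process starts outside $K*A$, which is already included in $Z_{0}$). For the second piece we argue exactly as at the end of the proof of Lemma \ref{encadre}: the event lies in $\mathscr F^{K}_{\deltaK,2\deltaK}$, so Lemma \ref{decalgen} with $t'=\deltaK$ and $t''=2\deltaK+1$ shifts it back to time $0$ at cost $2\lambK(2\deltaK+1)$. This gives
$$
\proba_{\laKqsd}\big(\TK_{A}\le 2\deltaK\big)\le 2\,\proba_{\laKqsd}\big(Z_{0}(A,K)=1\big)+C\,\lambK\,\deltaK
$$
(with a slight enlargement of the window to include the endpoint, which can be handled by applying Lemma \ref{borne:Z0} with $\deltaK+1$ instead of $\deltaK$; this still satisfies $\log(\deltaK+1)/K\to0$).

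Lemma \ref{borne:Z0} bounds the first term by $2e^{-\beta_{A}K}$. The second term is at most $C K^{2}\lambK$, which by Lemma \ref{lambdaV} is $e^{-K(V_{*}(\rpd)+o(1))}$; alternatively \eqref{cherlambda} gives the same conclusion. Now compare with $1/\MK(A)$ using \eqref{borne:MKA}. Since $\MK(A)<e^{\beta_{A}K}e^{-\zeta_{A}K}$, we get $2e^{-\beta_{A}K}\MK(A)\le 2e^{-\zeta_{A}K}\to0$. Since $\MK(A)<\lambK^{-1/3}$, we get $CK^{2}\lambK\MK(A)\le CK^{2}\lambK^{2/3}\to0$. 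Hence the left-hand side times $\MK(A)$ tends to $0$ and is eventually at most $1$, so $\JK\ge2$ and $\deltaK<\etaK(A)$.

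The only delicate point is making sure the application of Lemma \ref{decalgen} to the second subinterval matches its hypotheses, namely measurability in $\mathscr F^{K}_{t',t''}$ and the half-open versus closed endpoints. I expect this to be the main bookkeeping obstacle, and it is resolved by slightly enlarging the windows. Everything else is a direct comparison of exponential rates already encoded in the choice of $\alpha_{A}$ in \eqref{alpha}.
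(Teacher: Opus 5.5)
Your argument is correct and rests on the same ingredients as the paper's: Lemma \ref{borne:Z0}, the shift estimate of Lemma \ref{decalgen}, and the bound \eqref{borne:MKA} on $\MK(A)$. The paper only phrases it by contradiction: $\etaK(A)\le\deltaK$ would give $\proba_{\laKqsd}(X_{0}(A,K)=1)\le \proba_{\laKqsd}(Z_{0}(A,K)=1)\le e^{-\beta_{A}K}$, which clashes with the lower bound $\proba_{\laKqsd}(X_{0}(A,K)=1)\ge (1-o(1))/\MK(A)$ of Lemma \ref{encadre}, and the proof of that lower bound is essentially your estimate of $\proba_{\laKqsd}(\TK_{A}\le 2\deltaK)$ read in the contrapositive. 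The endpoint issue you flag needs no enlargement of windows: a fixed time $t>0$ is almost surely not a jump time, so $\proba_{\laKqsd}(\TK_{A}=t)=0$.
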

\begin{proof}
If $\etaK(A)\le \deltaK$ we must have
$$
\proba_{ \laKqsd}\big(\TK_{A}\le \etaK(A)\big)<
\proba_{ \laKqsd}\big(\TK_{A}\le
\deltaK\big)=\proba_{\laKqsd}\big(Z_{0}(A,K)=1\big)\;.
$$
Using Lemma \ref{borne:Z0} and the second statement of Lemma
\ref{encadre}, we get a contradiction for large $K$ with our choice 
\eqref{choix:MKA} of $\MK(A)$.
\end{proof}

\section{Proof of Theorem \ref{MainBD}
 and of other results.} \label{sec:3}

\subsection{The q-process and some of its 
 properties.}\label{q-processus}

We
recall that the q-process is the time-homogeneous Markov process on the set
$\domq$ whose transition probability is given
for $t>0$ by
\begin{equation}\label{transq}
\QK\big(\vN(t)=\vn\big|\vN(0)=\vm\big)=e^{\lambK\,t}
\frac{\lauK(\vn)}{\lauK(\vm)}\,\proba_{\vm}\big(\vN(t)=\vn\big)\;,
\end{equation}
see for example \cite{CMSM} Section 5.5 and all references therein. This process is never extinct, it is stationary,
ergodic
with invariant probability measure $\mu_{\sK}$, which is  given by
\begin{equation}\label{lamuK}
\frac{d\mu_{\sK}}{d\laKqsd}=\lauK.
\end{equation}

\begin{lemma}
\label{rateq} The q-process is  a Birth and Death process, with   a birth rate for  the $j$th specie ($1\le j\le d$)
 at $\vn\in\domq$ given by 
$$
K\,\frac{\lauK(\vn+\ve_{j})}{\lauK(\vn)} \,B_{j}(\vn/K)
$$
and a death rate
$$
\begin{cases}
&K\,\frac{\lauK(\vn-\ve_{j})}{\lauK(\vn)} \,D_{j}(\vn/K)\; \ 
\mathrm{if} \ \;\vn-\ve_{j}\in\domq\;,\\
&0\;\mathrm{else}\;.
\end{cases}
$$
\end{lemma}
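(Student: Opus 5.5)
The q-process is a Doob $h$-transform of the killed process with $h=\lauK$, so I would compute its generator directly from the transition function \eqref{transq} by differentiating at $t=0$. Let $L$ be the generator of $\vN$ on $\domq$, killed at $\vecz$. Then
$$Lf(\vn)=\sum_{j=1}^d K B_j(\vn/K)\big(f(\vn+\ve_j)-f(\vn)\big)+\sum_{j=1}^d K D_j(\vn/K)\big(f(\vn-\ve_j)-f(\vn)\big),$$
with the convention $f(\vecz)=0$. This convention encodes the jump to the absorbing point.

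\emph{Step 1: \eqref{invu} gives the eigenvalue equation.} Differentiating \eqref{invu} at $t=0$ gives $L\lauK=-\lambK\lauK$ on $\domq$. Differentiation is legitimate because, from each state, the total jump rate is finite: it equals $K\sum_j (B_j+D_j)(\vn/K)$. Moreover $\lauK$ is bounded by \eqref{supuK}. Hence the backward equation holds pointwise.

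\emph{Step 2: generator of the transformed semigroup.} By \eqref{transq}, the q-process semigroup is
$$\widehat P_t f=e^{\lambK t}\,\lauK^{-1}\,\sgK_t(\lauK f).$$
This is well defined since $\lauK\ge C'''>0$ by \eqref{supuK}. Taking $d/dt$ at $t=0$ gives
$$\widehat L f(\vn)=\lambK f(\vn)+\lauK(\vn)^{-1}L(\lauK f)(\vn).$$
Expanding $L(\lauK f)$ and subtracting $f(\vn)\,L\lauK(\vn)=-\lambK f(\vn)\lauK(\vn)$ makes the diagonal and $\lambK$ terms cancel. What remains is
$$\widehat Lf(\vn)=\sum_j K B_j(\vn/K)\frac{\lauK(\vn+\ve_j)}{\lauK(\vn)}\big(f(\vn+\ve_j)-f(\vn)\big)+\sum_j K D_j(\vn/K)\frac{\lauK(\vn-\ve_j)}{\lauK(\vn)}\big(f(\vn-\ve_j)-f(\vn)\big).$$

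\emph{Step 3: the boundary case.} When $\vn-\ve_j=\vecz$, the convention forces $\lauK(\vecz)=0$. That death rate therefore vanishes, which is exactly the "$0$ else" in the statement. When $\vn-\ve_j\notin\Z^d_+$, $D_j(\vn/K)=0$: since $n_j=0$, the vector field points inward by (HV4), and the process cannot leave $\Z^d_+$. These rates are those of a nearest-neighbour birth and death process on $\domq$.

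\emph{Step 4: identifying the process.} To conclude that the q-process is the Markov chain with these rates, I would check that the chain is non-explosive. Then its semigroup is uniquely determined by $\widehat L$, so it coincides with $\widehat P_t$. Since $\widehat P_t$ is conservative ($\widehat P_t1=1$ by \eqref{invu}), it is the minimal solution of the Kolmogorov backward equation and is honest, so no explosion occurs.

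The main technical point is justifying the differentiation at $t=0$ and the uniqueness step given the unbounded rates. I would handle this by computing the rates directly from \eqref{transq}: for $\vm\neq\vn$,
$$\lim_{t\to0}\frac1t\,\QK(\vN(t)=\vm\,|\,\vN(0)=\vn)=\frac{\lauK(\vm)}{\lauK(\vn)}\,q(\vn,\vm),$$
where $q$ is the rate matrix of $\vN$. This only needs the standard fact that $\proba_{\vn}(\vN(t)=\vm)=q(\vn,\vm)\,t+o(t)$ for a chain with finite jump rate at $\vn$.
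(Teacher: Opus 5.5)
Your proposal is correct and is essentially the paper's argument: the paper simply differentiates \eqref{transq} at $t=0$, which is exactly your last paragraph, and your Step 2 is the same computation at the level of generators, with $\lauK(\vecz)=0$ producing the vanishing death rate toward $\vecz$. Step 4 is more than the lemma requires, and as written its minimality claim does not follow from conservativeness alone. The clean reason is that the $h$-transform of the minimal semigroup of $\vN$ is the minimal semigroup for the new rates, and that semigroup is stochastic by \eqref{invu}. Also, the absence of deaths at $n_j=0$ comes from the state space being $\Z^d_+$, not from (HV4).
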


\begin{proof} It is obvious that the q-process is a birth and death process. Its birth and death rates are immediately obtained 
 by derivating \eqref{transq}
 at $t=0$.
\end{proof}

The absorbed process is concentrated on the set of trajectories

$$\Omega_0=\{\vs\in \D(\R_+,\R_+^d):\vs(t)=\vecz \Rightarrow \vs(u)=\vecz\,, \forall \, u \ge t\},$$ namely
for all ${\vn}\in \Z_+^d\backslash \{\vecz\}$, we have $\proba_{\vn}(\Omega_0)=1$. We also define 
$$\Omega^*= \{\vs\in \D(\R_+,\R_+^d): \vs(t)\neq\vecz ,\,\forall \, t>0\}$$

\begin{remark}
\label{R12}
For all ${\vn}\in \Z^{d}_{+}\backslash\{\vecz\}$, $\proba_{\vn}(\Omega^*)=0$ since the process  $\vN$ goes a.s. to extinction, 
 and $\QK_{\vn}(\Omega^*)=1$, since the $q-$process is never extinct. 
\end{remark}

For any $t\ge 0$ we will denote by ${\cal F}^{K,*}_{0,t}$ the restriction of the field ${\cal F}^K_{0,t}$ to the set
$\Omega^*_t= \{\vs\in \D(\R_+,\R_+^d): \vs(u)\neq\vecz \hbox{ for } u\in [0,t]\}$.

\medskip 
The jump rates of the q-process given in Lemma \ref{rateq} do not  have  the functional form assumed 
in \cite{SW} or \cite{KP}.
Therefore we cannot apply
directly the results of these works. 
It is however possible to connect  results on the process starting in
the \qsd 
and results on
the q-process due to the following lemma.

\begin{lemma}\label{connect}
For any $t\ge0$ and $\vs\in \Omega_{*}$,
\begin{equation}
\label{radon}
\frac{d\QK}{d\proba}\Big|_{\mathscr{F}_{0,t}^{K,*}}(\vs)
=\frac{e^{\lambK\,t}\lauK(\vs(t))}{\lauK(\vs(0))}.
\end{equation}

For any $K>0$ and $t\ge0$ 
and  ${\cal A}\in\mathscr{F}_{0,t}^{K,*}$ ,
\begin{equation}
\label{distancepq}
\big|\QK_{\mu_{\sK}}\big({\cal A}\big)
-\proba_{\laKqsd}\big({\cal A}\big)\big|\leq  \int_{\domq} |1-\lauK|\,d\laKqsd+\left(e^{\lambK\,t}-1\right).
\end{equation}
Let $t(\sbullet)$ be a positive function on $\R_{+}$ such that
\begin{equation}
\label{lambdak}
\lim_{K\to\infty}\lambK\,t(K)=0\;.
\end{equation}
Then
\begin{equation}
\label{distancetv}
\lim_{K\to\infty}d_{\mathrm{TV}}\left(\proba_{\laKqsd}\big|_{\mathscr{F}_{0,t(K)}^{K,*}}\,,\,
\QK_{\mu_{\sK}}\big|_{\mathscr{F}_{0,t(K)}^{K,*}}\right)=0\;.
\end{equation}

\medskip 
Moreover, there exists $D>1$ independent of $K$ such that 
if $(F_{\sK})_{\sK\in\R_{+}}$ is a family of positive functions
 on  $\Omega_{*}$ such that  for each
$K>0$, $F_{K}$ is measurable with respect to $\mathscr{F}_{0,t(K)}^{K,*}$, then
for any $K>0$
\begin{equation}
\label{comparison}
\frac{1}{D}\,\int_{\Omega_{*}} F_{K} \,d \proba_{\laKqsd} \le 
\int_{\Omega_{*}} F_{K} \,d \QK_{\mu_{\sK}} \le D\,\int_{\Omega_{*}}
F_{K} \,
d \proba_{\laKqsd}\;.
\end{equation}
\end{lemma}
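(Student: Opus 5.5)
We will prove the four assertions of Lemma \ref{connect} in order, each building on the previous one.

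\emph{Density formula \eqref{radon}.} It suffices to check \eqref{radon} on cylinder events
$$\{\vs(t_0)=\vn_0,\dots,\vs(t_k)=\vn_k\},\qquad 0=t_0<t_1<\dots<t_k=t,\quad \vn_i\in\domq .$$
These events generate $\mathscr{F}_{0,t}^{K,*}$ on $\Omega^*_t$. For a path of the absorbed process, staying in $\domq$ at all these times is the same as being in $\Omega^*_t$, because $\vecz$ is absorbing. By the Markov property and \eqref{transq}, the $\QK_{\vn_0}$-probability of such a cylinder is the product of the $\proba$ transitions times
$$\prod_i e^{\lambK(t_{i}-t_{i-1})}\,\frac{\lauK(\vn_i)}{\lauK(\vn_{i-1})}.$$
This product telescopes to $e^{\lambK t}\lauK(\vn_k)/\lauK(\vn_0)$. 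A monotone class argument, using right continuity of paths, then extends \eqref{radon} to the whole restricted $\sigma$-field.

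\emph{Bound \eqref{distancepq}.} Since $d\mu_{\sK}=\lauK\,d\laKqsd$ by \eqref{lamuK}, the factor $\lauK(\vs(0))$ cancels. For $\mathcal A\in\mathscr{F}_{0,t}^{K,*}$ (contained in $\Omega^*_t$) this gives
$$\QK_{\mu_{\sK}}(\mathcal A)=e^{\lambK t}\,\esperance_{\laKqsd}\big(\un_{\mathcal A}\,\lauK(\vN(t))\big).$$
Hence
$$\big|\QK_{\mu_{\sK}}(\mathcal A)-\proba_{\laKqsd}(\mathcal A)\big|\le (e^{\lambK t}-1)\,\esperance_{\laKqsd}\big(\un_{\mathcal A}\lauK(\vN(t))\big)+\esperance_{\laKqsd}\big(\un_{\tauex>t}\,|\lauK(\vN(t))-1|\big).$$
For the second term, the quasi-stationarity \eqref{invqsd} gives the law of $\vN(t)$ on $\{\tauex>t\}$ under $\proba_{\laKqsd}$ as $e^{-\lambK t}\laKqsd$. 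So this term equals $e^{-\lambK t}\int|1-\lauK|\,d\laKqsd\le\int|1-\lauK|\,d\laKqsd$. For the first term, $\esperance_{\laKqsd}\big(\un_{\tauex>t}\lauK(\vN(t))\big)=e^{-\lambK t}\int\lauK\,d\laKqsd=e^{-\lambK t}\le1$ by \eqref{supuK}. This bounds the first term by $e^{\lambK t}-1$, which gives \eqref{distancepq}.

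\emph{Total variation limit \eqref{distancetv}.} Take the supremum over $\mathcal A$ in \eqref{distancepq}. By Corollary \ref{starstar}, $\int|1-\lauK|\,d\laKqsd\le 2e^{-C''K}$, and $e^{\lambK t(K)}-1\to0$ by \eqref{lambdak}. Both measures are considered on the trace $\sigma$-field, so the mass of $\proba_{\laKqsd}$ off $\Omega^*_{t}$, namely $1-e^{-\lambK t}$, also vanishes. This yields \eqref{distancetv}.

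\emph{Comparison \eqref{comparison}.} This should follow from the same identity as above:
$$\int F_K\,d\QK_{\mu_{\sK}}=e^{\lambK t(K)}\int F_K\,\lauK(\vs(t(K)))\,d\proba_{\laKqsd}.$$
By \eqref{supuK}, $\lauK$ lies between $C'''$ and $2$. Also $e^{\lambK t(K)}$ is bounded above by some $D_0$ and below by $1$, uniformly in $K$ under \eqref{lambdak}. One can then take $D=\max(2D_0,1/C''')$.

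\emph{Main obstacle.} The delicate point is the measure-theoretic justification of \eqref{radon}. One must restrict correctly to $\Omega^*_t$, where $\proba$ has mass less than one, and extend from cylinders to $\mathscr{F}^{K,*}_{0,t}$. This needs a $\pi$–$\lambda$ argument and care with the fact that $\QK$ lives on $\Omega^*$. The remaining steps are direct computations from \eqref{invqsd} and \eqref{supuK}.
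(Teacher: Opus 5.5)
Your proposal is correct and follows essentially the paper's proof. Both rest on the identity $\QK_{\mu_{\sK}}(\mathcal A)=e^{\lambK t}\,\esperance_{\laKqsd}\big(\un_{\mathcal A}\,\lauK(\vN(t))\big)$ from \eqref{transq} and \eqref{lamuK}, then use \eqref{invqsd} and \eqref{supuK} for \eqref{distancepq}, Corollary \ref{starstar} with \eqref{lambdak} for \eqref{distancetv}, and the two-sided bounds on $\lauK$ and $e^{\lambK t(K)}$ for \eqref{comparison}. Your write-up is only more explicit: it gives a cylinder-set and $\pi$--$\lambda$ justification of \eqref{radon} (which the paper calls immediate), an equivalent splitting of the difference, and a remark on the mass of $\proba_{\laKqsd}$ off $\Omega^*_t$. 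Like the paper, you tacitly need $\lambK t(K)$ bounded for every $K$, whereas \eqref{lambdak} guarantees this only for $K$ large.
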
 

\begin{proof}
Equation \eqref{radon} immediately follows from \eqref{lamuK}.
For any  $K>0$ and any  
$\ 
{\cal A}\in\mathscr{F}_{0,t(K)}^{K,*}\;,
 $
we have by \eqref{transq} and  \eqref{lamuK}
\begin{align*}
\QK_{\mu_{\sK}}\big({\cal A}\big)&=e^{\lambK\,t(K)}
\,\esperance_{\laKqsd}\big(\un_{{\cal A}}\;\lauK(\vN(t(K))\big)\\
&=\esperance_{\laKqsd}\big(\un_{{\cal A}}\big)
+e^{\lambK\,t(K)}\big(\esperance_{\laKqsd}\big(\un_{{\cal A}}\;
\lauK(\vN(t(K))\big)-\esperance_{\laKqsd}\big(\un_{{\cal
    A}}\big)\big)\\
&\hskip 0.5cm +\left(e^{\lambK\,t(K)}-1\right)\,\esperance_{\laKqsd}\big(\un_{{\cal
    A}}\big)\big)\;.
\end{align*}
This implies
\begin{align*}
\big|\QK_{\mu_{\sK}}\big({\cal A}\big)
-\proba_{\laKqsd}\big({\cal A}\big)\big|&\le
 e^{\lambK\,t(K)}\esperance_{\laKqsd}\big(\big|1-\lauK(\vN(t(K))\big|\big)
+\left(e^{\lambK\,t(K)}-1\right)
\\&
\le \int_{\domq} |1-\lauK|\,d\laKqsd+\left(e^{\lambK\,t(K)}-1\right)\;,
\end{align*}
by using \eqref{invqsd}.
Therefore
\begin{align*}
d_{\mathrm{TV}}\left(\proba_{\laKqsd}\big|_{\mathscr{F}_{0,t(K)}^{K,*}}\,,\,
\QK_{\mu_{\sK}}\big|_{\mathscr{F}_{0,t(K)}^{K,*}}\right)
&=\sup_{{\cal
  A}\in\mathscr{F}_{0,t(K)}^{K,*}}
\big|\QK_{\mu_{\sK}}\big({\cal A}\big)
-\proba_{\laKqsd}\big({\cal A}\big)\big|
\\&\le \int_{\domq} |1-\lauK|\,d\laKqsd+\left(e^{\lambK\,t(K)}-1\right)
\end{align*}
which tends to zero when $K$ tends to infinity by the hypothesis \eqref{lambdak} and 
Corollary \ref{starstar}. This proves the first part of the
lemma.\\

For the second part we have
$$
\int_{\Omega_{*}} F_{K} \,d \QK_{\mu_{\sK}} =e^{\lambK\,t(K)}\;
\esperance_{\laKqsd}\Big(F_{K}\big(\vN(s), s\le t(K)\big)\,\lauK(\vN(t(K))\big)\Big)
$$
and the result follows from
$$
1\le \inf_{\sK}e^{\lambK\,t(K)}\le \sup_{\sK}e^{\lambK\,t(K)}< +\infty
$$
and \eqref{supuK}.
\end{proof}

We now show that the q-process is exponentially 
$\phi$-mixing.

\begin{theorem}\label{phimelange}
There exist three   constants $K_{*}>1$, $C_{1}'>0$  and $C_{3}'>0$ 
 such that for any $K\ge K_{*}$, for any $\vm$ in $\domq$ and any $t\ge0$
\begin{align*}
\sup_{\vm\in\domq}d_{TV}\big(\QK_{\vm}\big(\vN(t)\in\vsbullet \big)
\,,\,\mu_{\sK}(\vsbullet)\big) &= \sup_{\vm\in\domq}\sum_{\vn\in\domq} \left|\QK_{\vm}\big(\vN(t)\in\vn \big)
-\mu_{\sK}(\vn)
\right|\\
&\le C_{1}'\;e^{-C_{3}'\,t/\log K}\;.
\end{align*}
\end{theorem}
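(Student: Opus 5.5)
We will derive the bound directly from the mixing estimate \eqref{melange}, using the explicit form \eqref{transq} of the q-process transition probabilities and the bounds \eqref{supuK} on $\lauK$. By \eqref{transq} and \eqref{lamuK}, for $\vm,\vn\in\domq$,
$$
\QK_{\vm}\big(\vN(t)=\vn\big)-\mu_{\sK}(\vn)=\frac{\lauK(\vn)}{\lauK(\vm)}\Big(e^{\lambK t}\proba_{\vm}\big(\vN(t)=\vn\big)-\lauK(\vm)\,\laKqsd(\vn)\Big).
$$
Hence, since $\laKqsd(\vecz)=0$ and the Dirac term in \eqref{melange} does not charge $\domq$,
$$
\sum_{\vn\in\domq}\big|\QK_{\vm}(\vN(t)=\vn)-\mu_{\sK}(\vn)\big|\le \frac{\sup\lauK}{\inf\lauK}\,e^{\lambK t}\sum_{\vn\in\domq}\Big|\proba_{\vm}(\vN(t)=\vn)-e^{-\lambK t}\lauK(\vm)\laKqsd(\vn)\Big|\le \frac{1+e^{-C''K}}{C'''}\,e^{\lambK t}\,\Gamma(t,K).
$$
Inserting \eqref{borneGamma} gives the bound
$$
\frac{2C_1}{C'''}\Big(e^{-C_2K}+e^{\lambK t}e^{-C_3t/\log K}\Big).
$$

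The second term is harmless: since $\lambK=e^{-K\Oun}\le C_3/(2\log K)$ for $K$ large, it is at most $e^{-C_3t/(2\log K)}$. The real obstacle is the first term, $e^{-C_2K}$, which does not decay in $t$. This is precisely the ``unusual'' term in \eqref{melange}, coming from the non-conservative nature of the absorbed process. The estimate above therefore gives the claim directly only for $t\le C_2K\log K/C_3'$, where $e^{-C_2K}\le e^{-C_3't/\log K}$ with, say, $C_3'=\min(C_3/2,\,\cdot)$.

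For larger $t$ I would use the semigroup property of the q-process to bootstrap. Set $t_0=c\,K\log K$ with $c$ chosen so that the bound above yields
$$
\sup_{\vm}d_{TV}\big(\QK_{\vm}(\vN(t_0)\in\cdot),\,\mu_{\sK}\big)\le\rho<1/2
$$
for all large $K$; this is possible because both terms become small as $K\to\infty$. The standard Dobrushin contraction argument then applies: $\mu_{\sK}$ is invariant for the q-process, and the Dobrushin coefficient is submultiplicative, so
$$
\sup_{\vm}d_{TV}\big(\QK_{\vm}(\vN(t)\in\cdot),\,\mu_{\sK}\big)\le 2(2\rho)^{[t/t_0]}.
$$
This decays only like $e^{-c't/(K\log K)}$, which is too weak. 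To obtain the rate $1/\log K$, I would instead take $t_0=c\log K$. At that scale the first term $e^{-C_2K}$ is negligible, and the second term $e^{-C_3c}$ can be made less than $1/8$ by choosing $c$ large. The Dobrushin coefficient of $\QK$ at time $t_0$ is then bounded by $\rho\le\frac{1}{C'''}(2C_1e^{-C_2K}+2C_1e^{-C_3c})<1/2$ for suitably chosen constants and $K\ge K_*$; if $C_1/C'''$ is large, I would enlarge $c$. Iterating gives $2(2\rho)^{[t/(c\log K)]}\le C_1'e^{-C_3't/\log K}$. I expect the main care to be needed in this step: controlling the contraction coefficient uniformly in the starting point $\vm$, which is where the lower bound $\inf\lauK>C'''$ from \eqref{supuK} is essential.
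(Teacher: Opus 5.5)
Your argument is correct, but it runs in the opposite order from the paper's. The paper first improves the mixing estimate for the killed semigroup itself (Lemmas \ref{mieuxcor} and \ref{mieuxmelange}). It uses the rank-one kernel $\mathcal{R}_*(\vm,\vn)=e^{-\lambK\zeta\log K}\lauK(\vm)\laKqsd(\vn)$, which satisfies $\mathcal{R}_*(\sgK_{\zeta\log K}-\mathcal{R}_*)=(\sgK_{\zeta\log K}-\mathcal{R}_*)\mathcal{R}_*=0$. Hence $(\sgK_{\zeta\log K})^p-e^{-\lambK\zeta\log K\,(p-1)}\mathcal{R}_*=(\sgK_{\zeta\log K}-\mathcal{R}_*)^p$, whose norm is at most $\Gamma(\zeta\log K,K)^p\le e^{-p}$. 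This removes the non-decaying $e^{-C_2K}$ term already at the level of the sub-Markovian semigroup. Only then does the paper apply the h-transform \eqref{transq}, \eqref{lamuK}, absorbing $e^{\lambK t}$ via $\lambK\le C_3''/(2\log K)$.

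You instead apply the h-transform once, at the fixed time $t_0=c\log K$. You then iterate on the q-process side with Dobrushin's contraction coefficient. This is legitimate because $\QK$ is a conservative Markov semigroup with invariant law $\mu_{\sK}$, and the bound at $t_0$ is uniform in $\vm$.

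Both proofs use the same submultiplicativity mechanism, just on opposite sides of the h-transform. The paper's algebraic identity is what stands in for a contraction coefficient for a sub-Markovian kernel that has no invariant probability. Your route needs only standard Markov chain tools.

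The paper's ordering gives the killed-process estimates (Lemma \ref{mieuxmelange}, Lemma \ref{mieuxcor}(ii)) as by-products, and these are reused in Lemma \ref{nuAcL} and Theorem \ref{gdvAc}. Your result recovers them as well, since
$\sgK_t(\vm,\vn)-e^{-\lambK t}\lauK(\vm)\laKqsd(\vn)=e^{-\lambK t}\frac{\lauK(\vm)}{\lauK(\vn)}\big(\QK_{\vm}(\vN(t)=\vn)-\mu_{\sK}(\vn)\big)$.

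Two cosmetic points. Your final bound on $\rho$ drops the factor $e^{\lambK t_0}$, which is harmless because $\lambK\log K\to0$. The detour through $t_0=cK\log K$ can simply be deleted.
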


For the proof of Theorem \ref{phimelange} we need  preparatory
Lemmas.

\begin{lemma}\label{mieuxcor}
(i) For any integer $p\ge 1$ and any $\zeta>0$ and any $K\ge 1$,
\begin{align*}
&\sup_{\vm\in\domq}\sum_{\vn\in\domq}\left|\proba_{\vm}\big(\vN(p\,
 \zeta \log K)=\vn\big)
-e^{-p\,\lambK\, \zeta \log K} \; u_{\sK}(\vm)\;\laKqsd(\vn)
\right|\\
&= \sup_{\vm\in\domq}\sum_{\vn\in\domq}\left|\sgK_{p \zeta \log
  K}(\vm,\vn)
-e^{-p\,\lambK\, \zeta \log K} \; u_{\sK}(\vm)\;\laKqsd(\vn)
\right|\\
&
\le \Gamma( \zeta \log K,\,K)^{p}\;.
\end{align*}

(ii) It follows that for any $K$ large enough
$$
\sup_{\vm\in\domq}\sum_{\vn\in\domq}\left|\proba_{\vm}\big(\vN(
\deltaK)=\vn\big)
-e^{-\lambK\,\deltaK} \; u_{\sK}(\vm)\;\laKqsd(\vn)
\right|\le \lambK\;.
$$
\end{lemma}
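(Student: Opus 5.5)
The plan for (i) is to make the quasi-stationary correction term compose exactly like a semigroup, so that iterating \eqref{melange} multiplies the errors. For $t\ge 0$ define on $\domq$ the kernel
$$
R_{t}(\vm,\vn)=\sgK_{t}(\vm,\vn)-e^{-\lambK t}\,\lauK(\vm)\,\laKqsd(\vn)\;,
$$
and write $\Pi=\lauK\otimes\laKqsd$ for the rank-one kernel $\Pi(\vm,\vn)=\lauK(\vm)\laKqsd(\vn)$. Since the chain is absorbed at $\vecz$, the restriction of $\sgK_{t}$ to $\domq$ is a sub-Markovian semigroup, so $\sgK_{s}\sgK_{t}=\sgK_{s+t}$ as kernels on $\domq$. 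From \eqref{invu} we have $\sgK_{s}\Pi=e^{-\lambK s}\Pi$, and from \eqref{invqsd} we have $\Pi\,\sgK_{t}=e^{-\lambK t}\Pi$. The normalisation in \eqref{supuK} gives $\Pi\Pi=\big(\int\lauK\,d\laKqsd\big)\Pi=\Pi$. Expanding the product then yields
$$
R_{s}R_{t}=\sgK_{s+t}-2e^{-\lambK(s+t)}\Pi+e^{-\lambK(s+t)}\Pi=R_{s+t}\;,
$$
and hence $R_{p\tau}=R_{\tau}^{p}$ for every integer $p\ge 1$ and every $\tau>0$.

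Next consider the norm $\|R\|=\sup_{\vm}\sum_{\vn\in\domq}|R(\vm,\vn)|$. This is the operator norm on bounded functions, so it is submultiplicative. Restricting the total-variation estimate \eqref{melange} to subsets of $\domq$ kills the $\delta_{\vecz}$ term, because $\delta_{\vecz}$ gives no mass to $\domq$. This gives $\|R_{\tau}\|\le\Gamma(\tau,K)$, up to the factor convention for $|\cdot|_{TV}$, which I take as in \eqref{melange}. Taking $\tau=\zeta\log K$, we get $\|R_{p\zeta\log K}\|\le\|R_{\zeta\log K}\|^{p}\le\Gamma(\zeta\log K,K)^{p}$, which is (i). The first equality in (i) is just the definition of $\sgK$ restricted to $\domq$.

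For (ii), apply (i) with $p=K$; I assume $K$ is an integer, and otherwise take $p=[K]$. Choose $\zeta=\deltaK/(p\log K)$, so that $p\,\zeta\log K=\deltaK=[K^{2}]$ and $\zeta\log K\approx K$. Since $e^{-\lambK t}\le 1$, the definition \eqref{borneGamma} gives
$$
\Gamma(\zeta\log K,K)\le C_{1}\big(e^{-C_{2}K}+e^{-C_{3}K/(2\log K)}\big)\le e^{-cK/\log K}
$$
for some $c>0$ and $K$ large. Consequently the error in (ii) is at most $e^{-cK^{2}/\log K}$. By \eqref{cherlambda}, $\lambK\ge e^{-c'K}$ for some constant $c'$, so this bound is smaller than $\lambK$ for $K$ large.

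The main point to check carefully is the algebraic identity $R_{s}R_{t}=R_{s+t}$. It relies on using both eigen-relations \eqref{invqsd} and \eqref{invu} together with the normalisation $\int\lauK\,d\laKqsd=1$, and on everything being restricted to $\domq$ so that the absorbing state does not interfere. The remaining steps are routine bookkeeping of constants.
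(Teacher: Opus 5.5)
Your proposal is correct and follows the paper's argument. Your identity $R_sR_t=R_{s+t}$ is the same rank-one algebra the paper uses: there it is phrased as $\mathcal{R}_{*}(\sgK_{\zeta\log K}-\mathcal{R}_{*})=(\sgK_{\zeta\log K}-\mathcal{R}_{*})\mathcal{R}_{*}=0$, so that $(\sgK_{\zeta\log K}-\mathcal{R}_{*})^{p}$ telescopes, and it is combined with submultiplicativity of $\normiii{\cdot}$.

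For (ii) the routes differ only in parametrization. The paper fixes $\zeta^{*}$ with $\Gamma\le 1/e$ and takes $p\approx\deltaK/(\zeta^{*}\log K)$; you take $p=[K]$ and a $K$-dependent real $\zeta$ with $p\,\zeta\log K=\deltaK$ exactly. Both give an error of order $e^{-cK^{2}/\log K}\ll\lambK$. Your choice has the small advantage of landing exactly on $\deltaK$, avoiding the paper's unstated divisibility requirement $p([\zeta^{*}\log K]+1)=\deltaK$.
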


\begin{proof}

(i) Let $\mathcal{R}_{*}$  be the rank one operator in
$\ell^{\infty}(\domq)$ with kernel 
$$
\mathcal{R}_{*}(\vm,\,\vn)=e^{-\lambK\, \zeta \log K} \; u_{\sK}(\vm)\;\laKqsd(\vn)\;.
$$
For a real bi-infinite matrix $T$ with indices in $\domq$, we define

$$
\normiii{ T}  = \sup_{\vm\in\domq}\sum_{\vn\in\domq}\left|
T(\vm,\vn)\right|.
$$

Note that for two bi-infinite matrices  $T_{1}$ and $T_{2}$, 
$$
\normiii{T_{1}T_{2}}  \le \normiii{ T_{1}}
\, \normiii{ T_{2}}.
$$
We have from  \eqref{melange} 
$$
\normiii[\Big]{  \sgK_{ \zeta \log
  K}-\mathcal{R}_{*}}
 \le \Gamma( \zeta \log K,\,K)\;.
$$
From \eqref{invqsd}, \eqref{cherlambda} and \eqref{invu} we have
$$
\mathcal{R}_{*}\;\sgK_{ \zeta \log K}=\sgK_{ \zeta \log K}\mathcal{R}_{*}=
e^{-\lambK\, \zeta \log K} \mathcal{R}_{*}\;.
$$
We have also from \eqref{supuK}
$$
\mathcal{R}_{*}^{2}=e^{-\lambK\, \zeta \log K} \mathcal{R}_{*}\;.
$$
It follows using
$$
\mathcal{R}_{*}\;\big(\sgK_{ \zeta \log K}-\mathcal{R}_{*}\big)=
\big(\sgK_{ \zeta \log K}-\mathcal{R}_{*}\big)\; \mathcal{R}_{*}=0
$$
that for any $p\ge 1$
$$
\normiii[\Big]{\big(\sgK_{ \zeta \log K}\big)^{p}-e^{-\lambK\, \zeta \log K\,(p-1)}
\mathcal{R}_{*}}
 \le \Gamma( \zeta \log K,\,K)^{p}\;,
$$
which proves the first part of 
the lemma.

(ii) We check from \eqref{borneGamma} that $\,\Gamma( [\zeta \log K]+1,K) \le
C_{1}(e^{-C_{2} K}e^{-\lambK\, \zeta \log K} + e^{-C_{3} \zeta })$.  We choose $\zeta^*$ independent of $K$  such that for $K$ large enough
$\, \Gamma( [\zeta^* \log K]+1,K) \le 1/ e$. Then there exists an integer $p$ such that  $p( [\zeta^* \log K]+1) = \deltaK$ . The results follows.
\end{proof}

\begin{lemma}\label{mieuxmelange}
There exist two  constants $C_{1}''>0$ and $C_{3}''>0$ 
 such that for any $t\ge0$
\begin{equation}
\label{eq:mieux}
\sup_{\vm \in \domq}d_{TV}\Big(\proba_{\vm}\big(\vN(t)\in\vsbullet\big),
e^{-\lambK\,t}\lauK(\vm)\,\laKqsd(\vsbullet)\Big)
\le C_{1}''\;e^{-C_{3}''\,t/\log K}\;.
\end{equation}
\end{lemma}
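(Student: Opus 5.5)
The natural route is to bootstrap from Lemma \ref{mieuxcor}(i), which already gives geometric decay along the discrete times $p\,\zeta\log K$, and then interpolate to arbitrary $t$ with the semigroup property. First we fix $\zeta^*$ as in the proof of Lemma \ref{mieuxcor}(ii). By \eqref{borneGamma}, for $K$ large we have $\Gamma(\zeta^*\log K,K)\le C_{1}(e^{-C_{2}K}+e^{-C_{3}\zeta^*})\le e^{-1}$. Set $\tau_K=\zeta^*\log K$. For $t\ge0$ we write $t=p\,\tau_K+r$ with $p=[t/\tau_K]$ and $0\le r<\tau_K$.

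Next we use the rank one operator $\mathcal{R}_{*}$ with kernel $e^{-\lambK\tau_K}\lauK(\vm)\laKqsd(\vn)$. The claim is
\[
\sgK_{t}-e^{-\lambK t}\lauK\otimes\laKqsd=\sgK_{r}\Big(\big(\sgK_{\tau_K}\big)^{p}-e^{-\lambK\tau_K(p-1)}\mathcal{R}_{*}\Big),
\]
which follows from \eqref{invu}: $\sgK_{r}\lauK=e^{-\lambK r}\lauK$. Since $\sgK_{r}$ is sub-Markovian, $\normiii{\sgK_{r}}\le1$. Submultiplicativity of $\normiii{\cdot}$ together with Lemma \ref{mieuxcor}(i) then gives
\[
\normiii{\sgK_{t}-e^{-\lambK t}\lauK\otimes\laKqsd}\le \Gamma(\tau_K,K)^{p}\le e^{-p}\le e\,e^{-t/(\zeta^*\log K)}.
\]
The total variation distance in \eqref{eq:mieux} is bounded by this $\normiii{\cdot}$ quantity on $\domq$. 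The mass at $\vecz$ needs no separate treatment, because the measure $e^{-\lambK t}\lauK(\vm)\laKqsd$ lives on $\domq$, so we just restrict to $\domq$ as in Lemma \ref{mieuxcor}. This yields $C_{1}''=e$ and $C_{3}''=1/\zeta^*$ for $K\ge K_0$.

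The remaining point, and the main one to check, is uniformity in $K$ for small $K$ and small $t$. For $K<K_0$, or whenever the bound is trivial, both measures have total mass at most $1+\sup\lauK\le 3$ by \eqref{supuK}. So the left side is at most $3$, and enlarging $C_{1}''$ covers these cases, using $e^{-C_{3}''t/\log K}$ only where it is meaningful. For $K$ near $1$, where $\log K$ is small, this is handled by restricting to $K\ge K_0>1$, or by replacing $\log K$ with $\log(K\vee e)$. The only delicate verification is the commutation identity: $\mathcal{R}_{*}$ must be the correct projection so that the error term stays $e^{-\lambK t}\lauK\otimes\laKqsd$ exactly. This rests on \eqref{invqsd}, \eqref{invu} and the normalization $\int\lauK\,d\laKqsd=1$ from \eqref{supuK}.
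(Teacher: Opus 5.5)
Your proposal is correct and is essentially the paper's proof. It uses the same $\zeta^*$, the same splitting $t=p\,\zeta^*\log K+r$ with the cross term removed by \eqref{invu}, the bound $\normiii{\sgK_r}\le 1$ combined with Lemma~\ref{mieuxcor}(i), and the trivial bound $3$ from \eqref{supuK} for $t<\zeta^*\log K$. That trivial bound is genuinely needed there: in that range $p=0$, Lemma~\ref{mieuxcor}(i) only covers $p\ge1$, and this is why the paper ends with $C_1''=3e$ rather than $e$.

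There are three small points to tighten. First, restricting to $\domq$ is not justified by the target measure living there, since the mass $\proba_{\vm}(\tauex\le t)$ at $\vecz$ does not decay. The restriction to $\domq$ is simply how the statement must be read, as in the paper. Second, the range $K<K_0$ is handled by taking $K$ large, as the paper does, not by enlarging $C_1''$: for large $t$ the bound $3$ cannot dominate $C_1''e^{-C_3''t/\log K}$.
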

\begin{proof}
As in the previous proof, we
choose $\zeta^*$ independent of $K$  such that for $K$ large enough
$\, \Gamma( \zeta^* \log K,K) \le 1/ e$. It follows from Lemma
\ref{mieuxcor} that  \eqref{eq:mieux} is proved with $C_{1}'' = 1$ and
$C_{3}'' = 1 /\zeta^*$ for any $t$ of the form $p\,\zeta^* \log K$. 

To obtain  \eqref{eq:mieux}  for a  general $t>\zeta^* \log K$, we write
$ \, t= p\,\zeta^* \log K + s$, with $0\le s<\zeta^* \log K$ for a
positive $p$. We have by the semigroup property
\begin{align*}
&\un_{\ell^{\infty}(\domq)}\,\sgK_{t}
-e^{-\lambK\,t}\,\lauK\otimes\laKqsd
\\&=
\un_{\ell^{\infty}(\domq)}\sgK_{s}
\left(\un_{\ell^{\infty}(\domq)}\sgK_{p\,\zeta^* \log K}-
e^{-\lambK\,p\,\zeta^* \log K}\,\lauK\otimes\laKqsd\right)
\\& \hskip 1cm
+e^{-\lambK\,p\,\zeta^* \log
  K}\left(\un_{\ell^{\infty}(\domq)}\sgK_{s}\lauK\otimes\laKqsd
-e^{-\lambK\,s}\,\lauK\otimes\laKqsd\right).
\end{align*}
We observe that the second term vanishes by \eqref{invu}. Therefore
using Lemma \ref{mieuxcor} with $\zeta=\zeta^{*}$ we get
\begin{align*}
&\normiii{ \,\sgK_{t}
-e^{-\lambK\,t}\,\lauK\otimes\laKqsd}
\\
&\le \sup_{0\le s\le \zeta^* \log K}
\normiii{\,\sgK_{s}}\, 
e^{-p}\le e^{-p\,\zeta^* \log K/(\zeta^* \log K)}
\\
&\le 
e^{-t/(\zeta^* \log K)}\sup_{0\le s\le \zeta^* \log K}e^{s/(\zeta^*
  \log K)}\le e\;e^{-t/(\zeta^* \log K)}\;,
\end{align*}
which is the estimate \eqref{eq:mieux} with $C_{1}''=e$ and $C_{3}'' =
1 /\zeta^*$. 

Finally, to obtain  \eqref{eq:mieux}  for  $0\le t\le \zeta^* \log K$,
we observe that
\begin{align*}
\normiii{ \,\sgK_{t}
-e^{-\lambK\,t}\,\lauK\otimes\laKqsd}
&\le
\normiii{ \,\sgK_{t}}
+\normiii{ 
e^{-\lambK\,t}\,\lauK\otimes\laKqsd} \le 2+e^{-C''\,K}\le 3 
\end{align*}
by \eqref{supuK}. We can write
\begin{align*}
\normiii{ \,\sgK_{t}
-e^{-\lambK\,t}\,\lauK\otimes\laKqsd} &\le 3 
e^{-t/(\zeta^* \log K)}\;e^{t/(\zeta^* \log K)}
\le 3\,e\, e^{-t/(\zeta^* \log K)}
\end{align*}
which is the estimate \eqref{eq:mieux} with $C_{1}''=3\, e$ and
 $C_{3}'' = 1 /\zeta^*$. We have obtained  \eqref{eq:mieux} for
 all $t\ge 0$ with $C_{1}''=3\, e$ and
 $C_{3}'' = 1 /\zeta^*$.
\end{proof}

\begin{proof}[Proof of Theorem \ref{phimelange}.]
It follows immediately from Lemma
\ref{mieuxmelange}, the bounds \eqref{supuK} and formulas
\eqref{transq} and \eqref{lamuK} that for any $\vm\in\domq$
$$
d_{TV}\big(\QK_{\vm}\big(\vN(t)\in\vsbullet\big)
,\mu_{\sK}(\vsbullet)\big)\le
e^{\lambK\,t}\;\frac{2}{C'''}\;C_{1}''\;e^{-C_{3}''\,t/\log K}.
$$
From \eqref{cherlambda} there exists $K_{*}>1$ such that for any $K\ge
K_{*}$, $\lambK\le C_{3}''/(2\log K)$.
Theorem  \ref{phimelange} follows  with
$C_{1}'=2\,C_{1}''/C''' $ and $C_{3}'=C_{3}''/2 $.
\end{proof}

We deduce the exponential $\phi$-mixing property.  

\begin{theorem}
\label{thm:mixing}
Let us consider $0<t_{1}<t_{2}$ and $B\in \mathcal{F}^{K,*}_{t_{2},\infty} $. Then we have
$$\left | \QK_{\mu_{\sK}}\Big(B \,|\, \mathcal{F}^K_{0,t_{1}}\Big)-  \QK_{\mu_{\sK}}(B)\right |\leq C'_{1}\,e^{-(t_{2}-t_{1})C'_{3}/\log K}.$$
\end{theorem}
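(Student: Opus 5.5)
The argument reduces the conditional probability to a one-time marginal estimate, to which Theorem \ref{phimelange} applies directly. Since $B\in\mathcal{F}^{K,*}_{t_{2},\infty}$, I write $B=\theta_{-t_{2}}B'$ with $B'\in\mathcal{F}^{K,*}_{0,\infty}$ the shifted event. I then introduce the bounded function $g(\vn)=\QK_{\vn}(B')$, with values in $[0,1]$ and defined on $\domq$. The q-process is time homogeneous and Markov (it is defined through the transition kernel \eqref{transq}). The Markov property at time $t_{2}$ gives $\QK_{\mu_{\sK}}(B\,|\,\mathcal{F}^K_{0,t_{2}})=g(\vN(t_{2}))$. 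Conditioning further on $\mathcal{F}^K_{0,t_{1}}$ and using the Markov property at time $t_{1}$ yields
\[
\QK_{\mu_{\sK}}\big(B\,\big|\,\mathcal{F}^K_{0,t_{1}}\big)=\widehat{\mathds E}^{(\sK)}_{\vN(t_{1})}\big(g(\vN(t_{2}-t_{1}))\big).
\]
By stationarity of the q-process under $\mu_{\sK}$, we also have $\QK_{\mu_{\sK}}(B)=\int g\,d\mu_{\sK}$.

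The difference to be bounded is therefore
\[
\Big|\sum_{\vn\in\domq}\Big(\QK_{\vm}\big(\vN(t_{2}-t_{1})=\vn\big)-\mu_{\sK}(\vn)\Big)g(\vn)\Big|,\qquad \vm=\vN(t_{1}).
\]
Since $0\le g\le 1$, this is at most $\sum_{\vn}|\QK_{\vm}(\vN(t_{2}-t_{1})=\vn)-\mu_{\sK}(\vn)|$. This sum is exactly the quantity controlled uniformly in $\vm$ by Theorem \ref{phimelange} with $t=t_{2}-t_{1}$. That theorem gives the bound $C_{1}'e^{-C_{3}'(t_{2}-t_{1})/\log K}$ almost surely, for $K\ge K_{*}$. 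Because the supremum in Theorem \ref{phimelange} is over all starting states $\vm$, the bound holds whatever the value of $\vN(t_{1})$, so no integration over the past is needed. This gives the claimed uniform $\phi$-mixing estimate.

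The only point requiring care is the measure-theoretic justification of the Markov property for the q-process on $\Omega^*$ with the restricted $\sigma$-fields $\mathcal{F}^{K,*}$. This is where I expect the main (mild) obstacle. I would handle it by first checking the identity for cylinder events $B=\{\vN(s_{1})\in E_{1},\dots,\vN(s_{k})\in E_{k}\}$ with $t_{2}\le s_{1}<\dots<s_{k}$. For such events $g$ is given by an explicit product of the kernels \eqref{transq}, and the Chapman--Kolmogorov relation, which follows from \eqref{invu}, makes the computation routine. I would then extend to all of $\mathcal{F}^{K,*}_{t_{2},\infty}$ by a monotone class argument, since both sides of the inequality are controlled by the same bound uniformly over cylinder events. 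Finally, the constraint $K\ge K_{*}$ is inherited from Theorem \ref{phimelange}.
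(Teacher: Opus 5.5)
Your proof is correct and is essentially the paper's argument. The paper also uses the Markov property at times $t_{1}$ and $t_{2}$ to write $\QK_{\mu_{\sK}}(B\,|\,\mathcal{F}^K_{0,t_{1}})$ as $\sum_{\vn}\QK_{\mu_{\sK}}(B\,|\,\vN(t_{2})=\vn)\,\QK_{\mu_{\sK}}(\vN(t_{2})=\vn\,|\,\vN(t_{1}))$, uses stationarity to obtain $\QK_{\mu_{\sK}}(B)$ by replacing the transition probabilities with $\mu_{\sK}(\vn)$, and concludes with the bound of Theorem \ref{phimelange}, which is uniform in the starting state. One small point on your measure-theoretic remark: apply the monotone class argument to the Markov identity $\QK_{\mu_{\sK}}(B\,|\,\mathcal{F}^K_{0,t_{2}})=g(\vN(t_{2}))$, since the class of such $B$ is a Dynkin system, rather than to the inequality, which is not obviously stable under disjoint unions; once the identity holds on all of $\mathcal{F}^{K,*}_{t_{2},\infty}$, your bound via $0\le g\le 1$ applies as written.
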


\begin{proof}
\begin{align*}
  \QK_{\mu_{\sK}}\Big(B \,|\, \mathcal{F}^K_{0,t_{1}}\Big) 
 & =\sum_{\vn\in \domq, \vm\in \domq} \QK_{\mu_{\sK}}\Big(B;  \vN(t_{2})=\vn ; \vN(t_{1}) =\vm \,|\, \mathcal{F}^K_{0,t_{1}}\Big)\\
 &= \sum_{\vn\in \domq, \vm\in \domq} \QK_{\mu_{\sK}}\Big(B \,|\, \vN(t_{2})=\vn ;  \vN(t_{1}) =\vm ; \mathcal{F}^K_{0,t_{1}}\Big)\\
& \hskip 2cm \times \QK_{\mu_{\sK}}\Big( \vN(t_{2})=\vn \,|\,\vN(t_{1}) =\vm ; \mathcal{F}^K_{0,t_{1}}\Big)\, \QK_{\mu_{\sK}}\Big(\vN(t_{1}) =\vm  \,|\, \mathcal{F}^K_{0,t_{1}}\Big) \\
&= \sum_{\vn\in \domq, \vm\in \domq} \QK_{\mu_{\sK}}\Big(B \,|\, \vN(t_{2})=\vn \Big)\\
& \hskip 2cm \times \QK_{\mu_{\sK}}\Big( \vN(t_{2})=\vn \,|\,\vN(t_{1}) =\vm \Big)\, \QK_{\mu_{\sK}}\Big(\vN(t_{1}) =\vm  \,|\, \mathcal{F}^K_{0,t_{1}}\Big)\,.
 \end{align*}
Then we  can write
\begin{align*}
 \left | \QK_{\mu_{\sK}}\Big(B \,|\, \mathcal{F}^K_{0,t_{1}}\Big)-  \QK_{\mu_{\sK}}(B)\right |
&\leq
\sum_{\vn\in \domq, \vm\in \domq} \QK_{\mu_{\sK}}\Big(B \,|\, \vN(t_{2})=\vn \Big)\  \QK_{\mu_{\sK}}\Big(\vN(t_{1}) =\vm  \,|\, \mathcal{F}^K_{0,t_{1}}\Big)\\
& \hskip 2cm \times \left| \QK_{\mu_{\sK}}\Big( \vN(t_{2})=\vn \,|\,\vN(t_{1}) =\vm \Big)  - \mu_{K}(\vn)\right |  \\
&\leq  C'_{1}\,e^{-(t_{2}-t_{1})C'_{3}/\log K},
 \end{align*}
where the last inequality results from Theorem \ref{phimelange}.
\end{proof}

\subsection{Convergence to the Poisson process for the $q$-process}

An immediate consequence of Theorem \ref{thm:mixing} is that the sequence $(X_{j}(A,K), j \ge 0)$ satisfies the mixing condition required in Chen \cite{LChen}.

\begin{corollary} 
\label{mix} The  sequence $(X_{j}(A,K), j \ge 0)$ satisfies the following mixing condition. For all $j, k\ge 2$ and any $B\in \sigma(X_{j+\ell}(A,K), \ell \ge k)$, one has
$$\left|\, \QK_{\mu_{\sK}}\Big(B\,|\, \sigma(X_{0}(A,K),\ldots, X_{j}(A,K))\Big) - \QK_{\mu_{\sK}}(B)\right| \leq \phi_{K}(k),$$
where 
$\, \phi_{K}(k) = C'_{1} e^{-C'_{3} (k-1) \etaK(A)/\log K}$. 
\end{corollary}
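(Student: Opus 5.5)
The corollary is a direct translation of the continuous-time $\phi$-mixing estimate of Theorem \ref{thm:mixing} into the discrete index $j$. The plan is to identify the $\sigma$-fields generated by the Bernoulli variables with $\sigma$-fields of the path on suitable time windows, and then apply the theorem with the right choice of $t_1$ and $t_2$.

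First, by definition \eqref{def-xjak}, $X_i(A,K)$ depends only on the trajectory on $[i\etaK(A),(i+1)\etaK(A)[$. Hence
$$\sigma(X_{0}(A,K),\ldots,X_{j}(A,K))\subset \mathcal{F}^K_{0,(j+1)\etaK(A)}.$$
Similarly, any $B\in\sigma(X_{j+\ell}(A,K),\ \ell\ge k)$ belongs to $\mathcal{F}^{K}_{(j+k)\etaK(A),\infty}$. Under $\QK_{\mu_{\sK}}$ the trajectories lie a.s. in $\Omega^*$ by Remark \ref{R12}, so we may regard $B$ as an element of $\mathcal{F}^{K,*}_{(j+k)\etaK(A),\infty}$.

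Next we apply Theorem \ref{thm:mixing} with
$$t_1=(j+1)\etaK(A),\qquad t_2=(j+k)\etaK(A).$$
Since $k\ge 2$, we have $t_2>t_1$, and $t_2-t_1=(k-1)\etaK(A)$. This gives
$$\left|\QK_{\mu_{\sK}}\big(B\,|\,\mathcal{F}^K_{0,t_1}\big)-\QK_{\mu_{\sK}}(B)\right|\le C'_1 e^{-C'_3(k-1)\etaK(A)/\log K}=\phi_K(k)$$
almost surely.

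To pass from the larger $\sigma$-field to the smaller one, write $\mathcal{G}=\sigma(X_0(A,K),\ldots,X_j(A,K))\subset\mathcal{F}^K_{0,t_1}$. The tower property gives
$$\QK_{\mu_{\sK}}(B\,|\,\mathcal{G})-\QK_{\mu_{\sK}}(B)=\widehat{\mathds E}^{(\sK)}_{\mu_{\sK}}\big(\QK_{\mu_{\sK}}(B\,|\,\mathcal{F}^K_{0,t_1})-\QK_{\mu_{\sK}}(B)\,\big|\,\mathcal{G}\big).$$
Conditional Jensen (the absolute value of a conditional expectation is at most the conditional expectation of the absolute value) then transfers the almost sure bound to conditioning on $\mathcal{G}$, which is the statement.

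The only delicate point is measurability. We must make sure the windows are half-open as in \eqref{def-xjak}, so that $X_j$ is measurable with respect to $\mathcal{F}^K_{0,(j+1)\etaK(A)}$; the closed endpoint in $\mathcal{F}_{a,b}$ causes no harm. We also need $B$ to be viewed on $\Omega^*$ as required by the hypothesis of Theorem \ref{thm:mixing}, which the full $\QK$-measure of $\Omega^*$ handles. Everything else is a one-line application of the theorem.
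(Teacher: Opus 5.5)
Your proof is correct and follows the paper's argument, which simply applies Theorem \ref{thm:mixing} with suitable $t_1<t_2$; you also spell out the tower-property and conditional-Jensen step that the paper leaves implicit. Your choice $t_1=(j+1)\etaK(A)$ is the right one: the paper writes $t_1=j\,\etaK(A)$, which does not cover the window $[j\etaK(A),(j+1)\etaK(A)[$ on which $X_j(A,K)$ depends, whereas your $t_2-t_1=(k-1)\etaK(A)$ produces exactly the factor $(k-1)$ in $\phi_K(k)$.
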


The proof is  deduced from Theorem \ref{thm:mixing} with $t_{1} = j\,\etaK(A) $ and $t_{2}= (j+k)\etaK(A)$.

\begin{theorem}\label{MainBDq}
Let $A$    be  a subset of $\rpd$ satisfying Assumption  {\bf (HS)}. 
Under the hypotheses {\bf (HV)}, 
for any $K>1$ 
one can find  a number
$\etaK(A)$ and an integer $\MK(A)$  (see Section \ref{preuve:BD}) 
such that for any fixed $s>0$ 
$$
\lim_{K\to\infty}d_{\mathrm{TV}}\left(
\mathscr{L}^{(\QK_{\mu_{\sK}})}\bigg(\sum_{j=0}^{[\MK(A)\,s]}X_{j}(A,K)\bigg)
\,,\,\mathrm{Poisson}(s)\right)=0\;.
$$
\end{theorem}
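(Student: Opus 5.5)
The strategy is to apply a Chen–Stein type Poisson approximation for $\phi$-mixing sequences of Bernoulli variables, following \cite{LChen}, to the stationary sequence $(X_{j}(A,K))_{j\ge0}$ under $\QK_{\mu_{\sK}}$. Stationarity of this sequence comes from the stationarity of the q-process, and its mixing rate is given by Corollary \ref{mix}. Chen's bound for $W=\sum_{j=0}^{n}X_j$ with $n=[\MK(A)s]$ and $p=\QK_{\mu_{\sK}}(X_0=1)$ controls $d_{\mathrm{TV}}(\mathscr L(W),\mathrm{Poisson}((n+1)p))$ by a sum of three pieces:
\begin{itemize}
\item a term of order $n p^2$;
\item a term of order $n\sum_{1\le \ell\le k}\QK_{\mu_{\sK}}(X_0=1,X_\ell=1)$ for a cut-off $k$;
\item a mixing term of order $n\,\phi_K(k)$, or $\phi_K(k)$ times a bounded factor, depending on the version of the bound.
\end{itemize}
After bounding these, a final step replaces $\mathrm{Poisson}((n+1)p)$ by $\mathrm{Poisson}(s)$. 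This requires $(n+1)p\to s$, and since the Poisson total variation distance is Lipschitz in the parameter, that is enough.

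\emph{First step: the mean.} By \eqref{radon} and \eqref{comparison}, or more precisely by \eqref{distancepq} applied to the event $\{X_0=1\}\in\mathscr F^{K,*}_{0,\etaK(A)}$, we get
\[
\bigl|p-\proba_{\laKqsd}(X_0=1)\bigr|\le 2e^{-C''K}+(e^{\lambK\etaK(A)}-1).
\]
Lemma \ref{encadre}, combined with the bounds \eqref{borne:MKA}, Lemma \ref{borne:Z0} and \eqref{asympto}, gives $\MK(A)\proba_{\laKqsd}(X_0=1)\to1$. The bound \eqref{borne:MKA} also makes $\MK(A)e^{-C''K}\to0$. It remains to control $\MK(A)(e^{\lambK\etaK}-1)\approx \MK(A)\lambK\etaK(A)$, and this tends to zero by \eqref{asympto}. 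Hence $(n+1)p\to s$ and $np^2\to0$.

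\emph{Second step: the mixing term.} Take the cut-off $k=2$, or more generally any fixed $k$. Then
\[
\phi_K(k)\le C_1'e^{-C_3'\etaK(A)/\log K}.
\]
Since $\etaK(A)\ge\deltaK=[K^2]$ by Lemma \ref{deltaeta}, this is at most $C_1'e^{-cK^2/\log K}$. After multiplication by $n\le \MK(A)s\le e^{O(K)}$, the term still tends to zero.

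\emph{Third step: short-range clustering, the main obstacle.} We must show
\[
\MK(A)\,\QK_{\mu_{\sK}}(X_0=1,X_\ell=1)\to0\quad\text{for } \ell=1,\dots,k.
\]
For $\ell\ge2$ there is a gap of at least $\etaK(A)$ between the two blocks. Theorem \ref{thm:mixing} then gives the bound $p\,(p+\phi_K(1))$, and $\MK(A)\,p\,(p+\phi_K(1))\to 0$. The adjacent case $\ell=1$ is the delicate one, because two consecutive blocks have no separation. I would split the block $[\etaK,2\etaK[$ into $[\etaK,\etaK+\deltaK[$ and $[\etaK+\deltaK,2\etaK[$.
\begin{itemize}
\item The first piece is bounded by $\QK_{\mu_{\sK}}$ of an exit during a window of length $\deltaK$. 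By stationarity and \eqref{comparison} this is at most $D\,\proba_{\laKqsd}(Z_0=1)\le De^{-\beta_AK}$ (Lemma \ref{borne:Z0}). Multiplied by $\MK(A)$ it tends to zero by \eqref{borne:MKA}.
\item The second piece is separated from $[0,\etaK[$ by a gap of $\deltaK$. Theorem \ref{thm:mixing} bounds it by $p\,(p+C_1'e^{-C_3'K^2/\log K})$, which is again negligible after multiplying by $\MK(A)$.
\end{itemize}
If Chen's condition needs $k\to\infty$ rather than fixed $k$, the same split works for each $\ell$, and summing over $\ell\le k$ with $k$ growing slowly keeps all terms negligible.

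Combining the three steps with Chen's inequality and the Lipschitz dependence of the Poisson law on its parameter yields the theorem. The mean estimate in the first step is the point where the precise choices of $\MK(A)$ and $\etaK(A)$ from Subsection \ref{preuve:BD} are essential.
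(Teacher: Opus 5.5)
Your proposal is correct and follows essentially the paper's route. It uses Chen's Poisson approximation for the stationary $\phi$-mixing sequence $(X_j(A,K))_j$ under $\QK_{\mu_{\sK}}$, gets the mean $([\MK(A)s]+1)\,p\to s$ from \eqref{distancepq}, Corollary \ref{starstar}, Lemma \ref{encadre} and \eqref{asympto}, makes the mixing term negligible via $\etaK(A)\ge\deltaK=[K^2]$, and treats the adjacent pair $(X_0,X_1)$ exactly as the paper does: cut $[\etaK(A),2\etaK(A)[$ at $\etaK(A)+\deltaK$, apply Lemma \ref{borne:Z0} with \eqref{comparison} on the short window, and Theorem \ref{thm:mixing} across the gap $\deltaK$.

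There are two bookkeeping differences. The paper uses Chen's bound in the form with $\mathrm{Var}(W)-\EQK_{\mu_{\sK}}(W)$, so it must also sum the long-range covariances $p\sum_{\ell\ge2}\phi_K(\ell)$; this is immediate from the geometric decay you already use. Also, your ``$\phi_K(1)$'' for lags $\ell\ge2$ should read $\phi_K(2)=C_1'e^{-C_3'\etaK(A)/\log K}$, since with the paper's indexing $\phi_K(1)=C_1'$ is not small.
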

\begin{proof}
Let
$$
W(K,\,A,\,s)=\sum_{j=0}^{[\MK(A)\,s]}X_{j}(A,K)\;.
$$
We have by the stationarity of the q-process 
$$
\widehat {\mathds E}^{(K)}_{\mu_{\sK}}\big(W(K,\,A,\,s)\big)=s_{\sK}(A)
$$
with 
$$
s_{\sK}(A)=[\MK(A)\,s]\; \widehat {\mathds E}^{(K)}_{\mu_{\sK}}\big(X_{0}(A,K)\big) =[\MK(A)\,s]\; \QK_{\mu_{\sK}}\big(X_{0}(A,K)=1\big) \;.
$$
From Lemma \ref{connect}  \eqref{distancepq}, we have
$$
\big|\QK_{\mu_{\sK}}\big(X_{0}(A,K)=1\big)
-\proba_{\laKqsd}\big(X_{0}(A,K)=1\big)\big| 
\leq \int_{\domq} |1-\lauK|\,d\laKqsd
+\left(e^{\lambK\,\etaK(A)}-1\right)\;.
$$ 

Using $\MK(A)$ and $\etaK(A)$ defined in  \eqref{alpha}, \eqref{choix:MKA} and 
\eqref{choix:etaK} respectively,  Corollary \ref{starstar},  \eqref{supuK} and \eqref{lambdaeta}, we deduce that
$$
\lim_{K\to\infty}\big|[\MK(A)\,s]\;\QK_{\mu_{\sK}}\big(X_{0}(A,K)=1\big)
-[\MK(A)\,s]\;\proba_{\laKqsd}\big(X_{0}(A,K)=1\big)\big|=0\;.
$$

It follows from  Lemma \ref{encadre} using Lemma \ref{borne:Z0} and our
choices of $\MK(A)$, $\deltaK$ and $\etaK(A)$ that  $\, \MK(A)\;\proba_{\laKqsd}\big(X_{0}(A,K)=1\big)$ tends to $1$ as $K$ tends to infinity. We deduce that
\begin{equation}
\label{eq:sKA}
\lim_{K\to\infty} s_{\sK}(A)=s\;,
\end{equation}
and from estimate (2.2) in \cite{AJ} we obtain
$$
\lim_{K\to\infty}d_{\mathrm{TV}}\left(\mathrm{Poisson}(s_{\sK}(A))
\,,\,\mathrm{Poisson}(s)\right)=0\;.
$$

From the triangle inequality for the total variation norm, it will be
enough to prove that
$$
\lim_{K\to\infty}d_{\mathrm{TV}}\left(
\mathscr{L}^{(\QK_{\mu_{\sK}})}\bigg(\sum_{j=0}^{[\MK(A)\,s]}X_{j}(A,K)\bigg)
\,,\,\mathrm{Poisson}(s_{\sK}(A))\right)=0\;.
$$
An estimate on the l.h.s. follows from \cite{LChen} Theorem
4.1. The decorrelation assumption required in \cite{LChen} is proved in  Corollary \ref{mix}.  With our notations  we have (using the
stationarity of the q-process) for the integer  $m=2 $ in \cite{LChen} that 
\begin{align}
\label{eq:formula}
&d_{\mathrm{TV}}\left(
\mathscr{L}^{(\QK_{\mu_{\sK}})}\bigg(\sum_{j=0}^{[\MK(A)\,s]}X_{j}(A,K)\bigg)
\,,\,\mathrm{Poisson}(s_{\sK}(A))\right)\nonumber\\
&\le 6\,\min\big\{s_{\sK}(A)^{-1/2}, 1\big\}\,
 \bigg[\mathrm{Var}(W(K,\,A,\,s))-s_{\sK}(A)+ 10\,\,[\MK(A)\,s]\,\,
\QK_{\mu_{\sK}}\big(X_{0}(A,K)=1\big)^{2}\nonumber \\
&
\hskip 3cm +4\, (s_{\sK}(A)+1)\,[\MK(A)\,s]\; C_{1}'e^{-C_{3}'\,\etaK(A)/\log(K)}
\bigg]\;.
\end{align}

We  show that each term in brackets tends to $0$ as $K$ tends to infinity. 

We have
\begin{align*}
\mathrm{Var}(W(K,\,A,\,s))&=\widehat {\mathds E}^{(K)}_{\mu_{\sK}}\big(W(K,\,A,\,s)^{2}\big)
-\widehat {\mathds E}^{(K)}_{\mu_{\sK}}\big(W(K,\,A,\,s)\big)^{2}
\\&
=\sum_{j=0}^{[\MK(A)\,s]}\widehat {\mathds E}^{(K)}_{\mu_{\sK}}\big(X_{j}(A,K)^{2}\big)
+\sum_{j,\,k=0\,,j\neq
  k}^{[\MK(A)\,s]}\widehat {\mathds E}^{(K)}_{\mu_{\sK}}\big(X_{j}(A,K)\;X_{k}(A,K)\big)
-s_{\sK}(A)^{2}
\\&
=s_{\sK}(A)-s_{\sK}(A)^{2}+2\,\sum_{j,\,k=0\,,j<k
  }^{[\MK(A)\,s]}\widehat {\mathds E}^{(K)}_{\mu_{\sK}} \big(X_{j}(A,K)\,,X_{k}(A,K)\big)\;.
\end{align*}
We have
$$
\sum_{j,\,k=0\,,j<k
  }^{[\MK(A)\,s]}\widehat {\mathds E}^{(K)}_{\mu_{\sK}}\big(X_{j}(A,K)\;X_{k}(A,K)\big)=I_{1}
+I_{2}
$$
with
\begin{align*}
I_{1}&=\sum_{1\le j+1<k\le [\MK(A)\,s]}
\widehat {\mathds E}^{(K)}_{\mu_{\sK}}\big(X_{j}(A,K)\;X_{k}(A,K)\big) =\sum_{1\le j+1<k\le [\MK(A)\,s]}\; \widehat {\mathds E}^{(K)}_{\mu_{\sK}}
\big(X_{0}(A,K)\;X_{k-j}(A,K)\big)\;,
\end{align*}
and by stationarity,
$$
I_{2}=\sum_{j=0}^{[\MK(A)\,s]-1}\widehat {\mathds E}^{(K)}_{\mu_{\sK}}
\big(X_{j}(A,K)\,,X_{j+1}(A,K)\big)=[\MK(A)\,s]\;\widehat {\mathds E}^{(K)}_{\mu_{\sK}}
\big(X_{0}(A,K)\;X_{1}(A,K)\big)\;.
$$
For any $\ell\ge 2$ and conditioning on $\sigma(X_{0})$, we obtain by using Corollary \ref{mix} that 
\begin{align*}
& \left| \QK_{\mu_{\sK}}\big(X_{0}(A,K)=1\ ;\ X_{\ell}(A,K)=1\big) - \QK_{\mu_{\sK}}\big(X_{0}(A,K)=1\big) \QK_{\mu_{\sK}}\big(X_{\ell}(A,K)=1\big) \right|\\
&\le \phi_{K}(\ell-1)\,  \QK_{\mu_{\sK}}\big(X_{0}(A,K)=1\big).
\end{align*}
Therefore
\begin{align*}
&  \bigg| I_{1} -  \sum_{j\le 0\,,1<k-j\le [\MK(A)\,s]-j} \QK_{\mu_{\sK}}\big(X_{0}(A,K)=1\big) \QK_{\mu_{\sK}}\big(X_{k-j}(A,K)=1\big) \bigg|\\
& \le  \QK_{\mu_{\sK}}\big(X_{0}(A,K)=1\big)\, \sum_{j\ge 0\,,1<k-j\le [\MK(A)\,s]-j} \phi_{K}(k-j-1)\,.
\end{align*}
Note that
\begin{align*}
 & \sum_{j\le 0\,,1<k-j\le [\MK(A)\,s]-j} \QK_{\mu_{\sK}}\big(X_{0}(A,K)=1\big)\, \QK_{\mu_{\sK}}\big(X_{k-j}(A,K)=1\big)\\ 
&= \Big(\frac{1}{2}[\MK(A)\,s]^2 + \mathcal{O}([\MK(A)\,s])\Big) 
 \QK_{\mu_{\sK}}\big(X_{0}(A,K)=1\big)^2\\
 &=\frac{1}{2}\, s_{\sK}(A)^2 + s_{\sK}(A) \QK_{\mu_{\sK}}\big(X_{0}(A,K)=1\big)\, \mathcal{O}(1)\\
 &=\frac{1}{2}\, s_{\sK}(A)^2 + \frac{s_{\sK}(A)^2}{ [\MK(A)\,s]} \, \mathcal{O}(1)\,,
\end{align*}
and the last term tends to $0$ by  \eqref{eq:sKA} and \eqref{choix:MKA}.

Let us now remark that 
\begin{align*}
 \QK_{\mu_{\sK}}\big(X_{0}(A,K)=1\big)\,  \sum_{j\ge 0\,,1<k-j\le [\MK(A)\,s]-j} \phi_{K}(k-j-1)&\leq \QK_{\mu_{\sK}}\big(X_{0}(A,K)=1\big)\, \sum_{j=0}^{[\MK(A)\,s]} \sum_{\ell=2}^\infty \phi_{K}(\ell) \\
&=  s_{\sK}(A) C'_{1} \sum_{\ell=1}^\infty e^{-C'_{3}\,\ell \, \etaK(A)/\log(K)}.
\end{align*}
Using Lemma \ref{deltaeta}, we conclude that this quantity tends to $0$ as $K$ tends to infinity. (Recall that $\delta_{K}=[K^2]$). 

Summarizing  the previous results, we get that $\, I_{1} - \frac{1}{2}\, s_{\sK}(A)^2\,$ tends to $0$ as $K$ tends to infinity.

\medskip
In order to estimate $I_{2}$ we introduce the  Bernoulli random variable  $X'_{1}(A,K)$ which is equal to $1$ if $\vN(t)\in A$ for some $t\in [\etaK(A)+\delta_{K}, 2\etaK(A)[$ and $0$ otherwise. 
We observe that $$
\widehat {\mathds E}^{(K)}_{\mu_{\sK}}\big(X_{0}(A,K)\;X_{1}(A,K)\big) = \QK_{\mu_{\sK}}\big(X_{0}(A,K) =1\; ;\,X_{1}(A,K)=1\big) ,$$
and  that
\begin{align*}&\left|  \QK_{\mu_{\sK}}\big(X_{0}(A,K) =1\; ;\,X_{1}(A,K)=1\big) -  \QK_{\mu_{\sK}}\big(X_{0}(A,K) =1\; ;\,X'_{1}(A,K)=1\big)\right| \le 2\,  \QK_{\mu_{\sK}}\big(Z_{1}(A,K) =1\big).\end{align*}
We also have
$$
 \QK_{\mu_{\sK}}\big(X_{1}(A,K)=1\big) = \QK_{\mu_{\sK}}\big(Z_{1}(A,K) =1\big) + \QK_{\mu_{\sK}}\big(Z_{1}(A,K) =0\; ;\,X'_{1}(A,K)=1\big)$$
which implies
\begin{align*} \QK_{\mu_{\sK}}\big(X'_{1}(A,K)=1\big) =&  \QK_{\mu_{\sK}}\big(X_{1}(A,K)=1\big) - \QK_{\mu_{\sK}}\big(Z_{1}(A,K) =1\big)\\
& +  \QK_{\mu_{\sK}}\big(Z_{1}(A,K) =1\; ;\,X'_{1}(A,K)=1\big).\end{align*}
Using stationarity and Theorem \ref{thm:mixing}, we get
\begin{align*} 
&\left|  \QK_{\mu_{\sK}}\big(X_{0}(A,K) =1\; ;\,X_{1}(A,K)=1\big) -  \QK_{\mu_{\sK}}\big(X_{0}(A,K) =1\big)^2\right|\\
& \leq 4\,  \QK_{\mu_{\sK}}\big(Z_{0}(A,K) =1\big) + C_{1}'e^{-C_{3}'\delta_{K})/\log(K)}.
\end{align*}
Note that using \eqref{comparison},
\begin{align*}  \QK_{\mu_{\sK}}\big(X_{0}(A,K) =1\big) &\le D \,\proba_{\laKqsd}\big(X_{0}(A,K) =1\, ;\, \tauex > \etaK(A) \big)\\&  \le D \,\proba_{\laKqsd}\big(X_{0}(A,K) =1 \big)\,.\end{align*}
Then \eqref{eq:sKA}, \eqref{choix:etaK} and \eqref{choix:MKA} imply that $\, [\MK(A)\,s]\, \QK_{\mu_{\sK}}\big(X_{0}(A,K) =1\big)^2\,$ tends to $0$ as $K$ tends to infinity. 
By using in addition \eqref{borne:MKA} and Lemma \ref{borne:Z0} we conclude that $I_{2}$ tends to $0$ as $K$ tends to infinity. 
At this stage we have obtained that $\,\mathrm{Var}(W(K,\,A,\,s))-s_{\sK}(A)\,$ tends to $0$ as $K$ tends to infinity.  
The other terms in the r.h.s. term in \eqref{eq:formula} are estimated in the same way.
This concludes the proof of Theorem \ref{MainBDq}.
\end{proof}

\subsection{Back and forth with  the killed process.}

\subsubsection{From the q-process to the killed process, proof of
  Theorem \ref{MainBD}.}

Let $A$    be  a subset of $\rpd$ satisfying Assumption {\bf (HS)}. 
 We consider $[\MK(A)\,s]$ and $\etaK(A)$ as in  Theorem \ref{MainBDq}.

The following corollary of Theorem \ref{MainBDq} is a proof of Theorem \ref{MainBD}.

\begin{corollary}
Under the hypotheses {\bf (HV)}, 
for any $K>1$,
and  for any fixed $s>0$ 
$$
\lim_{K\to\infty}d_{\mathrm{TV}}\left(
\mathscr{L}^{(\proba_{\laKqsd})}\bigg(\sum_{j=0}^{[\MK(A)\,s]}X_{j}(A,K)\bigg)
\,,\,\mathrm{Poisson}(s)\right)=0\;.
$$
\end{corollary}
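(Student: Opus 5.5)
The plan is to transfer Theorem \ref{MainBDq} from the stationary q-process $\QK_{\mu_{\sK}}$ to the killed process started from the \qsd, using the total variation comparison \eqref{distancetv} of Lemma \ref{connect}, and then close with the triangle inequality for $d_{\mathrm{TV}}$.

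Set $t(K)=([\MK(A)\,s]+1)\,\etaK(A)$ and $W(K,A,s)=\sum_{j=0}^{[\MK(A)\,s]}X_{j}(A,K)$. First I check hypothesis \eqref{lambdak}. By \eqref{asympto}, $\lambK\,\MK(A)\,\etaK(A)\to 0$ exponentially fast. Since $s$ is fixed, this gives $\lambK\,t(K)\le (s+1)\,\lambK\,\MK(A)\,\etaK(A)\to0$. Hence \eqref{distancetv} holds for this $t(K)$.

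Next I handle measurability. The variable $W(K,A,s)$ is determined by the path on $[0,t(K)]$, so $\{W=k\}$ lies in $\mathscr{F}^{K}_{0,t(K)}$. However, Lemma \ref{connect} is stated on the restricted field $\mathscr{F}^{K,*}_{0,t(K)}$, i.e.\ on trajectories not extinct before $t(K)$. I therefore split each event as $\{W=k\}=(\{W=k\}\cap\Omega^*_{t(K)})\cup(\{W=k\}\cap\{\tauex\le t(K)\})$. Under $\QK_{\mu_{\sK}}$ the second piece has probability $0$ by Remark \ref{R12}. Under $\proba_{\laKqsd}$ it has probability at most $\proba_{\laKqsd}(\tauex\le t(K))=1-e^{-\lambK t(K)}\le\lambK t(K)$ by \eqref{expqsd}, and this tends to $0$. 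This gives
$$
\sup_{k}\big|\proba_{\laKqsd}(W=k)-\QK_{\mu_{\sK}}(W=k)\big|\le d_{\mathrm{TV}}\Big(\proba_{\laKqsd}\big|_{\mathscr{F}_{0,t(K)}^{K,*}},\QK_{\mu_{\sK}}\big|_{\mathscr{F}_{0,t(K)}^{K,*}}\Big)+\lambK t(K).
$$
Since the bound \eqref{distancepq} holds uniformly over events, it controls the total variation distance between the two laws of $W$, not just each point mass. This yields $d_{\mathrm{TV}}\big(\mathscr{L}^{(\proba_{\laKqsd})}(W),\mathscr{L}^{(\QK_{\mu_{\sK}})}(W)\big)\to0$.

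Finally, the triangle inequality together with Theorem \ref{MainBDq} gives the claim. The main point to be careful about is exactly this passage between $\mathscr{F}^{K}$ and its restriction $\mathscr{F}^{K,*}$. I expect it to be benign here, because the extinction probability over the window is $O(\lambK t(K))$, which vanishes by \eqref{asympto}.
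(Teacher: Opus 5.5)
Your proposal is correct and follows the paper's proof essentially verbatim. You use the same choice $t(K)=([\MK(A)\,s]+1)\,\etaK(A)$ and the same split on $\{\tauex\le t(K)\}$, whose probability is bounded by $\lambK\,t(K)\to 0$. You then apply the comparison \eqref{distancetv} of Lemma \ref{connect} and conclude with Theorem \ref{MainBDq} via the triangle inequality. The only cosmetic difference is the order: you first compare the two laws of $W$ and then pass to the Poisson law, whereas the paper, after splitting off the extinction event, bounds $\sum_k\big|\cdots-e^{-s}s^k/k!\big|$ directly.
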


\begin{proof}
\begin{align*}
&d_{\mathrm{TV}}\left(
\mathscr{L}^{(\proba_{\laKqsd})}\bigg(\sum_{j=0}^{[\MK(A)\,s]}X_{j}(A,K)\bigg)
\,,\,\mathrm{Poisson}(s)\right)\\
& \leq \sum_{k=0}^\infty \bigg|\proba_{\laKqsd}
\bigg(\sum_{j=0}^{[\MK(A)\,s]}X_{j}(A,K)=k\,;\, 
\tauex> ([\MK(A)\,s]+1)\etaK(A)\bigg) - e^{-s}\,\frac{s^k}{k!}\bigg|\\
& \hskip 0.5 cm+ \proba_{\laKqsd}\big(\tauex\le ([\MK(A)\,s]+1)\etaK(A)\big)\\
& = \sum_{k=0}^\infty
 \bigg|\proba_{\laKqsd}\big|_{\mathscr{F}_{0,([\MK(A)\,s]+1)\etaK(A)}^{K,*}}
\bigg(\sum_{j=0}^{[\MK(A)\,s]}X_{j}(A,K)=k\bigg) - e^{-s}\,
\frac{s^k}{k!}\bigg| \\ 
&\hskip 0.5 cm + \proba_{\laKqsd}\big(\tauex\le ([\MK(A)\,s]+1)\etaK(A)\big)\,.
\end{align*}
For the second term we have from \eqref{expqsd}
$$
 \proba_{\laKqsd}\big(\tauex\le ([\MK(A)\,s]+1)\etaK(A)\big)\le 
\lambK\,([\MK(A)\,s]+1)\etaK(A)
$$
which tends to zero when $K$ tends to infinity from our choices of 
$\MK(A)$ and $\etaK(A)$ (see \eqref{borne:MKA}, Lemma
\eqref{encadre} and Lemma \eqref{lambdaV}).

The result follows using the first part of Lemma \ref{connect} with
$t(K)=([\MK(A)\,s]+1)\etaK(A)$ and Theorem \ref{MainBDq}.

\end{proof}

\begin{proposition}
\label{part:point}
 The same result
follows for the process starting in $[K\vx_{0}]$ (instead of $\laKqsd$)
with $\vx_{0}\in A$ with $A$ satisfying Assumption {\bf (HS)} : for any $s>0$, 
$$
\lim_{K\to\infty}d_{\mathrm{TV}}\left(
\mathscr{L}^{(\proba_{[K\vx_{0}]})}\bigg(\sum_{j=0}^{[\MK(A)\,s]}X_{j}(A,K)\bigg)
\,,\,\mathrm{Poisson}(s)\right)=0\;.
$$
\end{proposition}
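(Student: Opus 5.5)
The plan is to transfer the result from the process started in $\laKqsd$ (the Corollary just proved) to the process started at $[K\vx_{0}]$. The bridge is the mixing estimate \eqref{melange} (or Lemma \ref{mieuxmelange}) applied after a short burn-in time $\deltaK=[K^{2}]$. On $[0,\deltaK]$ the process started at $[K\vx_{0}]$ stays in $K*A$ with overwhelming probability. After this time its law is close to $e^{-\lambK\deltaK}\lauK([K\vx_{0}])\laKqsd$ plus a mass at $\vecz$.

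\textbf{Step 1 (burn-in).} Write $W=\sum_{j=0}^{[\MK(A)s]}X_{j}(A,K)$ and let $W'$ be the analogous count of clumping intervals after time $\deltaK$. Concretely, $W'$ is built from the shifted variables $X_{j}\circ\theta_{\deltaK}$ for $j=0,\dots,[\MK(A)s]$. By Corollary \ref{iter:Kurtz} with $p=\deltaK$ and $G=A$,
\[
\proba_{[K\vx_{0}]}\big(\TK_{A}\le \deltaK\big)\le K^{2}e^{-C(\vx_{0},A)K}\to0 .
\]
So with high probability no exit from $A$ occurs in $[0,\deltaK]$. The counts $W$ and $W'$ then differ only through a shift of the clumping grid by $\deltaK\ll\etaK(A)$ (Lemma \ref{deltaeta}). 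This shift can affect at most the boundary intervals. Their total contribution has probability $O(\MK(A)\proba(Z_0=1))+O(1/\MK(A))$ by stationarity-type estimates (Lemma \ref{decalgen} and Lemma \ref{borne:Z0}), which tends to $0$.

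A cleaner alternative is to choose the time shift equal to a multiple of $\etaK(A)$. One takes a single clumping interval as burn-in, since $\proba_{[K\vx_0]}(\TK_A\le\etaK(A))$ is still small: $\etaK(A)\le e^{K(V_*(A)+o(1))}$ is below the exit time scale from $[K\vx_0]$. This last claim is delicate, so I prefer to shift by $\deltaK$ and absorb the grid misalignment as above.

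\textbf{Step 2 (Markov property and mixing).} By the Markov property at time $\deltaK$,
\[
\proba_{[K\vx_{0}]}(W'=k)=\sum_{\vn}\proba_{[K\vx_0]}(\vN(\deltaK)=\vn)\,\proba_{\vn}(W=k).
\]
Lemma \ref{mieuxcor}(ii) gives that $\proba_{[K\vx_0]}(\vN(\deltaK)\in\cdot)$ is within $\lambK$ in total variation of
\[
e^{-\lambK\deltaK}\lauK([K\vx_0])\,\laKqsd+\big(1-e^{-\lambK\deltaK}\lauK([K\vx_0])\big)\delta_{\vecz}.
\]
The mass at $\vecz$ requires care: starting from $\vecz$ every $X_j=1$. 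Its weight is therefore the real issue. Here $\lauK$ is only known to lie in $(C''',1+e^{-C''K}]$, so a priori the weight $1-\lauK([K\vx_0])$ need not be small.

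\textbf{Step 3 (main obstacle: the weight at $\vecz$).} The key claim is $\lauK([K\vx_{0}])\to1$. I would derive it from the representation $\lauK(\vn)=\lim_t e^{\lambK t}\proba_{\vn}(\tauex>t)$. Using $\proba_{\laKqsd}(\tauex>t)=e^{-\lambK t}$, I would compare $\proba_{[K\vx_0]}(\tauex>t)$ with the $\laKqsd$-average. By Step 1 and \eqref{melange} at time $\deltaK$, the process from $[K\vx_0]$ survives to $\deltaK$ with probability $1-o(1)$. Its law at $\deltaK$ conditioned on survival is close to $\laKqsd$. Then the relation $\lauK=e^{\lambK\deltaK}P^{(\sK)}_{\deltaK}\lauK$ together with $\int\lauK d\laKqsd=1$ and Corollary \ref{starstar} yields
\[
\lauK([K\vx_0])=e^{\lambK\deltaK}\esperance_{[K\vx_0]}\big(\lauK(\vN(\deltaK))\big)=1+o(1).
\]
This uses $\|\lauK\|_\infty\le 2$ and the closeness to $\laKqsd$ up to error $\lambK$ plus the surviving mass, where the mass at $\vecz$ contributes zero since $\lauK(\vecz)=0$. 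The consistency check $e^{-\lambK\deltaK}\lauK([K\vx_0])\approx 1-\proba(\tauex\le\deltaK)\approx1$ follows.

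\textbf{Step 4 (conclusion).} Hence $\proba_{[K\vx_0]}(W'=k)$ is within $o(1)$, uniformly in $k$ and in total variation, of $\proba_{\laKqsd}(W=k)$. The Corollary proved above then gives the Poisson$(s)$ limit for $W'$. Combined with Step 1, it gives the Poisson$(s)$ limit for $W$.
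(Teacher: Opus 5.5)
Your proposal is correct. It shares the paper's skeleton: a burn-in of length $\deltaK$ controlled by Corollary \ref{iter:Kurtz}, the Markov property at $\deltaK$, the mixing estimate \eqref{melange}, and $\lauK([K\vx_0])\to1$. The last fact is the paper's Lemma \ref{conv:uKnf}, proved there by essentially your argument, with $K*A$ and time $[K\log K]$ in place of survival and $\deltaK$.

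The difference is in how you reconcile the burn-in with the clumping grid. The paper keeps the grid fixed and replaces $X_0$ by $X'_0$ (exits in $[\deltaK,\etaK(A)[$), which coincides with $X_0$ on $\{\TK_A>\deltaK\}$. The count then depends only on times after $\deltaK$. After the Markov property, the grid under $\laKqsd$ is offset by $-\deltaK$, and quasi-invariance \eqref{invqsd} shifts it back at cost $e^{\lambK\deltaK}$. This leaves a single discrepancy window $[0,\deltaK[$ under $\laKqsd$, controlled by $\proba_{\laKqsd}(Z_0(A,K)=1)$. You instead shift the whole grid under $\proba_{[K\vx_0]}$. The transfer then lands directly on $W$ under $\laKqsd$ with no appeal to \eqref{invqsd}, but you pay for misalignment at every grid point.

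Your Step 1 is loose on exactly this cost.
\begin{itemize}
\item It is not only the boundary intervals that move: all $[\MK(A)\,s]+2$ grid points are displaced. You need a union bound over the windows $[j\etaK(A),j\etaK(A)+\deltaK[$. This gives $O(\MK(A)\proba_{\laKqsd}(Z_0(A,K)=1))+O(\lambK\MK(A)\etaK(A))$, the second term accounting for extinction, and both vanish by \eqref{borne:MKA}, Lemma \ref{borne:Z0} and \eqref{asympto}.
\item Lemmas \ref{decalgen} and \ref{borne:Z0} are statements under $\proba_{\laKqsd}$. So for the windows with $j\ge1$, which lie after time $\deltaK$, the bound must be derived after the transfer of Steps 2--3, not before it.
\end{itemize}
With this reordering your route works. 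The paper's is more economical, since it needs one $\deltaK$-window instead of about $\MK(A)$ of them.

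Your Step 3 is valid but roundabout. Its real content is \eqref{melange} evaluated on $\domq$, namely $|\proba_{[K\vx_0]}(\tauex>\deltaK)-e^{-\lambK\deltaK}\lauK([K\vx_0])|\le\Gamma(\deltaK,K)$. Combine this with $\proba_{[K\vx_0]}(\tauex>\deltaK)\ge\proba_{[K\vx_0]}(\TK_A>\deltaK)\to1$ and the upper bound in \eqref{supuK}. Inserting the unconditioned mixing estimate into $\lauK=e^{\lambK\deltaK}\sgK_{\deltaK}\lauK$ by itself would be circular. All the information comes from the survival bound, which your conditioning on survival does use.
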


\begin{proof}

For any integer $q\ge0$ we have using Corollary \ref{iter:Kurtz} with
$p=\deltaK$ 
\begin{align*}
&\proba_{[K\,\vx_{0}]}\bigg(\sum_{j=0}^{[\MK(A)\,s]}X_{j}(A,K)=q\bigg)
=\proba_{[K\,\vx_{0}]}\bigg(\sum_{j=0}^{[\MK(A)\,s]}X_{j}(A,K)=q\,;\,
\TK_{A}>\deltaK\bigg) +\deltaK\,e^{-C(\vx_{0},G)\,K}\;,
\end{align*}
where the last term tends to zero when $K$ tends to infinity.

Let
$$
X'_{0}(A,\,K)=\begin{cases}1&\;\mathrm{if}\; 
\NK_{t}\in K*A^c,\,\mathrm{for\ some}\; t\in [\deltaK,\,\etaK(A)[\;,\\
0&\;\mathrm{otherwise}\;.
\end{cases}
$$
We have by Corollary \ref{iter:Kurtz}
\begin{align*}
&\proba_{[K\,\vx_{0}]}\bigg(\sum_{j=0}^{[\MK(A)\,s]}X_{j}(A,K)=q\,;\,
\TK_{A}>\deltaK\bigg)
\\&
=\proba_{[K\,\vx_{0}]}\bigg(X'_{0}(A,K)+\sum_{j=1}^{[\MK(A)\,s]}X_{j}(A,K)=q\,;\,
\TK_{A}>\deltaK\bigg)
\\&
=
\proba_{[K\,\vx_{0}]}\bigg(X'_{0}(A,K)+\sum_{j=1}^{[\MK(A)\,s]}X_{j}(A,K)=q
\bigg)+\mathcal{O}\left(\deltaK\,e^{-C(\vx_{0},G)\,K}\right)\;.
\end{align*}
Using the Markov property we have
\begin{align*}
&\proba_{[K\,\vx_{0}]}\bigg(X'_{0}(A,K)+\sum_{j=1}^{[\MK(A)\,s]}X_{j}(A,K)=q\,\bigg)=\esperance_{[K\,\vx_{0}]}\left(\esperance_{\vN(\deltaK)}\left(
\un_{X'_{0}(A,K)+\sum_{j=1}^{[\MK(A)\,s]}X_{j}(A,K)=q}
\circ\theta_{-\deltaK}\right)\right)\;,
\end{align*}
where we recall that $\theta_{-\deltaK}$ is the $-\deltaK$ time translation.

\smallskip Using \eqref{melange}  with $t= \deltaK$, we get
\begin{align*}
&\bigg|\proba_{[K\,\vx_{0}]}\bigg(X'_{0}(A,K)+\sum_{j=1}^{[\MK(A)\,s]}X_{j}(A,K)=q
\bigg)
\\&
\hskip 1cm -e^{-\lambK\,\deltaK}\lauK([K\,\vx_{0}])\,
\esperance_{\laKqsd}\left(
\un_{X'_{0}(A,K)+\sum_{j=1}^{[\MK(A)\,s]}X_{j}(A,K)=q}
\circ\theta_{-\deltaK}\right)\bigg|
\\&
\le
\Gamma(K,\,\deltaK)+\bigg(1-e^{-\lambK\,\deltaK}\lauK([K\,\vx_{0}])\bigg)
\, \esperance_{\delta_{\vecz}}\left(
\un_{X'_{0}(A,K)+\sum_{j=1}^{[\MK(A)\,s]}X_{j}(A,K)=q}
\circ\theta_{-\deltaK}\right)
\bigg)\;.
\end{align*}
By Lemma \ref{conv:uKnf}, \eqref{choix:deltaK} and \eqref{cherlambda},
it follows that the right hand side tends to zero when $K$ tends to
infinity.

We similarly  conclude that
\begin{align*}
&\lim_{K\to\infty}
\bigg|\proba_{[K\,\vx_{0}]}\bigg(X'_{0}(A,K)+\sum_{j=1}^{[\MK(A)\,s]}X_{j}(A,K)=q
\bigg)
-\esperance_{\laKqsd}\left(
\un_{X'_{0}(A,K)+\sum_{j=1}^{[\MK(A)\,s]}X_{j}(A,K)=q}
\circ\theta_{-\deltaK}\right)\bigg|=0\;.
\end{align*}
Using \eqref{invqsd} we have
\begin{align*}
&\esperance_{\laKqsd}\left(
\un_{X'_{0}(A,K)+\sum_{j=1}^{[\MK(A)\,s]}X_{j}(A,K)=q}
\circ\theta_{-\deltaK}\right)=e^{\lambK \deltaK}
\proba_{\laKqsd}\bigg(X'_{0}(A,K)+\sum_{j=1}^{[\MK(A)\,s]}X_{j}(A,K)=q
\bigg)
\end{align*}
and using Lemma \ref{borne:Z0},
\begin{align*}
&\bigg|\proba_{\laKqsd}\bigg(X'_{0}(A,K)+\sum_{j=1}^{[\MK(A)\,s]}X_{j}(A,K)=q
\bigg)-\proba_{\laKqsd}\bigg(\sum_{j=0}^{[\MK(A)\,s]}X_{j}(A,K)=q
\bigg)\bigg|
\\&
\le \proba_{\laKqsd}\big(\TK_{A}< \deltaK\big)=
\proba_{\laKqsd}\big(Z_{0}(A,K)=1\big)\le e^{-\beta_{A}\,K}\;.
\end{align*}
Collecting all the error terms we get
$$
\lim_{K\to\infty}\bigg|
\proba_{K\vx_{0}}\bigg(\sum_{j=0}^{[\MK(A)\,s]}X_{j}(A,K)=q
\bigg)-\proba_{\laKqsd}\bigg(\sum_{j=0}^{[\MK(A)\,s]}X_{j}(A,K)=q
\bigg)\bigg|=0
$$
and the result follows.
\end{proof}



\section{Exponential law and some related results.}
\label{sec:4}

\begin{theorem}\label{loiexpo}
Let $A$ be an open subset of $\rpd$ satisfying Assumption ${\bf (HS)}$.
For any $t\ge0$
$$
\lim_{K\to\infty}\proba_{\laKqsd}\big(\TK_{A}>t\, \MK(A)\, \etaK(A)\big)=e^{-t}\;.
$$
In other words, the sequence  $\Big(\frac{\TK_{A}}{ \MK(A)\, \etaK(A)}\Big)$ issued from $\laKqsd$ converges in law to the exponential law of parameter $1$, as $K$ tends to infinity.
\end{theorem}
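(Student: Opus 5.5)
The plan is to reduce the statement to the Poisson limit of Theorem \ref{MainBD}, taking $k=0$. By definition of $X_{j}(A,K)$, for $t>0$ and $m_{K}=[\MK(A)\,t]$, the event $\big\{\sum_{j=0}^{m_{K}}X_{j}(A,K)=0\big\}$ equals $\big\{\TK_{A}\ge (m_{K}+1)\,\etaK(A)\big\}$, up to the convention on the half-open intervals. Indeed, $\TK_{A}$ is the first time $\vN/K$ leaves $A$, and the sum vanishes iff no visit to $K*A^c$ occurs in $[0,(m_{K}+1)\etaK(A)[$. Theorem \ref{MainBD} then gives
$$\proba_{\laKqsd}\big(\TK_{A}\ge (m_{K}+1)\etaK(A)\big)\longrightarrow \mathrm{Poisson}(t)(\{0\})=e^{-t}.$$
The case $t=0$ is trivial.

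It remains to compare $(m_{K}+1)\etaK(A)$ with $t\,\MK(A)\,\etaK(A)$. We have
$$t\,\MK(A)\etaK(A)-\etaK(A)\le m_{K}\etaK(A)\le t\,\MK(A)\etaK(A)\le (m_{K}+1)\etaK(A).$$
Hence
$$\proba_{\laKqsd}\big(\TK_{A}>t\MK(A)\etaK(A)\big)$$
is sandwiched between the probability that $\TK_{A}\ge (m_{K}+1)\etaK(A)$ and the probability that $\TK_{A}\ge m_{K}\etaK(A)$. The latter equals $\proba_{\laKqsd}\big(\sum_{j=0}^{m_{K}-1}X_{j}=0\big)$.

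To handle the lower endpoint, I would apply the same argument with $t$ replaced by $t-\varepsilon$. For $K$ large, $m_{K}-1\ge [\MK(A)(t-\varepsilon)]$, so that probability is at most $\proba_{\laKqsd}\big(\sum_{j=0}^{[\MK(A)(t-\varepsilon)]}X_{j}=0\big)\to e^{-(t-\varepsilon)}$. Monotonicity of the sum in the number of terms justifies this, and so does $\MK(A)\to\infty$, which follows from \eqref{choix:MKA}. Letting $\varepsilon\to0$ gives
$$\limsup_{K\to\infty}\proba_{\laKqsd}\big(\TK_{A}>t\MK(A)\etaK(A)\big)\le e^{-t}.$$
The matching $\liminf\ge e^{-t}$ comes from the upper endpoint directly. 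An alternative to the $\varepsilon$ argument is to bound the probability of a single extra block, $\proba_{\laKqsd}(X_{m_{K}}=1)$, using Lemma \ref{decalgen} and \eqref{inspiration}. This gives $\le 1/\MK(A)+2\lambK(m_{K}+1)\etaK(A)$, which tends to $0$ by \eqref{asympto}.

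The only delicate point is the boundary convention, namely strict versus non-strict inequality at the block endpoints and the right-continuity of paths. It is harmless, because the sandwich argument absorbs any single block. Convergence in law to $\mathrm{Exp}(1)$ then follows, since the limit distribution function is continuous.
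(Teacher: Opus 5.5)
Your proof is correct and follows the paper's argument. The paper also identifies the event that $\TK_{A}$ exceeds $(q+1)\etaK(A)$ with $\{\sum_{j=0}^{q}X_{j}(A,K)=0\}$, applies Theorem \ref{MainBD} at $k=0$, and sandwiches $t\,\MK(A)$ between $[(t\pm\varepsilon)\MK(A)]+1$ using $\MK(A)\to\infty$. Your differences are minor refinements of the same route: you treat the strict versus non-strict inequality at block endpoints more carefully, you perturb only one side, and you mention an optional single-block bound.
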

\begin{proof}
We first observe that since the $(X_{j}(A,K))$ are Bernoulli random
variables we have  for any $q\in\mathds{N}$
\begin{align*}
\proba_{\laKqsd}\big(\TK_{A}>(q+1) \etaK(A)\big)&=
\proba_{\laKqsd}\big(X_{j}(A,K)=0\,,\, 0\le j\le q\big)=\proba_{\laKqsd}\bigg(\sum_{j=0}^{q}X_{j}(A,K)=0\bigg)\;.
\end{align*}
From Theorem  \ref{MainBD}  we conclude that for any $t\ge0$
$$
\lim_{K\to\infty}\proba_{\laKqsd}\big(\TK_{A}>   ([t\,\MK(A)]+1)   \,
\etaK(A)\big)=e^{-t}\;.
$$

Thanks to  the 
divergence of $\MK(A) $ with $K$ (see equality \eqref{choix:MKA}), for any $\varepsilon$ small enough and for any  $K$ large enough, we have that
$$[(t-\varepsilon)\,\MK(A)] + 1\le t\,\MK(A) \le [(t+\varepsilon)\,\MK(A)] + 1.$$
Then,
\begin{align*}
\proba_{\laKqsd}\big(\TK_{A}>   ([(t+\varepsilon)\,\MK(A)]+1)   \,
\etaK(A)\big)& \le \proba_{\laKqsd}\big(\TK_{A}>t\, \MK(A)\, \etaK(A)\big)\\
&\le \proba_{\laKqsd}\big(\TK_{A}>([(t-\varepsilon)\,\MK(A)]+1)\, \etaK(A)\big).
\end{align*}
Therefore, we deduce that 
 $$e^{-(t+\varepsilon)}
\le  \lim_{K\to\infty}\proba_{\laKqsd}\big(\TK_{A}>t\, \MK(A)\, \etaK(A)\big)\le  \lim_{K\to\infty}e^{-(t-\varepsilon)} ,$$
and the result follows by letting $\varepsilon$ tend to $0$. 
\end{proof}

\begin{theorem}
\label{loiTAK}  
The sequence  of random variables $\frac{\TK_{A}}{ \MK(A)\, \etaK(A)}$ issued from $\laKqsd$ is exponentially tight. It follows that all moments converge to the moments of the exponential law of parameter $1$.

In particular,  $$
\lim_{K\to\infty}\frac{ \esperance_{\laKqsd}(\TK_{A}) }{ \MK(A)\, \etaK(A) }=1\;,
$$
and
$$
\lim_{K\to\infty}\proba_{\laKqsd}\big(\TK_{A}>t\,  \esperance_{\laKqsd}(\TK_{A}) \big)=e^{-t}\;.
$$

 \end{theorem}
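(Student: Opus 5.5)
The plan is to show a uniform geometric tail bound for $\TK_{A}/(\MK(A)\,\etaK(A))$ under $\proba_{\laKqsd}$, namely
$$\proba_{\laKqsd}\big(\TK_{A}>t\,\MK(A)\,\etaK(A)\big)\le C\,\rho^{\,t}$$
for some $\rho<1$, $C$ independent of $K$ (large) and all $t\ge 0$. This exponential tightness, together with the convergence in law of Theorem \ref{loiexpo}, gives uniform integrability of every power. Hence all moments converge to those of the exponential law, in particular the first one. The last claim then follows from Theorem \ref{loiexpo}, since the ratio $\esperance_{\laKqsd}(\TK_{A})/(\MK(A)\etaK(A))\to 1$ and the limit distribution function is continuous.

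To get the tail bound, set $L_K=(\MK(A)+1)\etaK(A)$, i.e.\ a block of $\MK(A)+1$ clumping intervals. By Theorem \ref{loiexpo} with $t=1$, for $K$ large
$$\proba_{\laKqsd}\big(\TK_{A}>L_K\big)\le e^{-1}+\varepsilon=:\rho_0<1 .$$
I would then iterate with the Markov property at times $nL_K$. The event $\{\TK_{A}>(n+1)L_K\}$ equals $\{\TK_{A}>nL_K\}$ intersected with the shifted event $\theta_{-nL_K}\{\TK_{A}>L_K\}$, which lies in $\mathscr F^K_{nL_K,(n+1)L_K}$. To decouple I would insert a gap of length $\deltaK$ and apply Theorem \ref{RBD} with $s=\deltaK$. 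This gives
$$\proba_{\laKqsd}\big(\TK_{A}>(n+1)L_K\big)\le \proba_{\laKqsd}\big(\TK_{A}>nL_K\big)\Big(e^{-\lambK\deltaK}\,\sup\lauK\,\big(\rho_0+\proba_{\laKqsd}(Z_0(A,K)=1)\big)+\Gamma(\deltaK,K)\Big).$$
Here the shifted block is enlarged by $\deltaK$ at its start, and that enlargement costs at most $\proba_{\laKqsd}(Z_0(A,K)=1)$, by Lemma \ref{borne:Z0} and invariance \eqref{invqsd}. Using \eqref{supuK}, Lemma \ref{borne:Z0} and $\Gamma(\deltaK,K)\to0$ (since $\deltaK=[K^2]$), the bracket is at most some $\rho<1$ for large $K$. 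Induction then gives $\proba_{\laKqsd}(\TK_{A}>nL_K)\le\rho^n$, and since $L_K\sim\MK(A)\etaK(A)$ this is the required tail bound.

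Alternatively, the decoupling can be done under the q-process with Theorem \ref{thm:mixing} and then transferred by \eqref{comparison}. That route is only valid on time windows where $\lambK t$ stays bounded. For $t$ beyond $1/\lambK$ one instead uses directly that $\TK_{A}\le\tauex$ and $\proba_{\laKqsd}(\tauex>t)=e^{-\lambK t}$, which decays much faster on the rescaled scale by \eqref{asympto}. The main obstacle I expect is this bookkeeping in the induction: the normalization in Theorem \ref{RBD} involves $\esperance_{\laKqsd}(\un_{A_1}\lauK(\vN(t_1))\un_{\tauex>t_1})$ rather than the probability itself, and the killing weight has to be controlled so that the constant stays strictly below one uniformly in $n$, with $\sup\lauK\le 1+e^{-C''K}$ absorbing the $\lauK$ factor.
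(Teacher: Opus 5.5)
Your main argument is the paper's: blocks of $\MK(A)+1$ clumping intervals, decoupling of consecutive blocks via Theorem \ref{RBD} and \eqref{supuK}, a per-block bound from Theorem \ref{loiexpo}, and induction giving $\proba_{\laKqsd}\big(\TK_{A}>q(\MK(A)+1)\etaK(A)\big)\le\rho^{q}$. The only difference is the gap. The paper discards one whole clumping interval, so $s=\etaK(A)$ and no $Z_{0}$ term appears, while you take $s=\deltaK$ and pay $\proba_{\laKqsd}(Z_{0}(A,K)=1)$; both work.

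Drop the aside on the q-process route, though. On the rescaled scale $e^{-\lambK t}=e^{-\lambK\MK(A)\etaK(A)\,u}$ decays much more slowly, not faster, because $\lambK\MK(A)\etaK(A)\to0$ by \eqref{asympto}, so that tail alone would not give uniform integrability. Your first route never needs it, since Theorem \ref{RBD} is uniform in $t_{1},t_{2}$.
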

 
\begin{proof}

Let for $q\in \mathds{N}$
$$
\Theta(q,K)=\proba_{\laKqsd}\big(\TK_{A}>  q\,[\MK(A) +1]\,  \,
\etaK(A)\big)\;.
$$
As observed in the previous proof we have
\begin{align*}
\Theta(q+1,K)&=\proba_{\laKqsd}\bigg(\sum_{j=0}^{   (q+1)\,[\MK(A) +1]-1\,  \,
   }X_{j}(A,K)=0\bigg)
\\&
\le 
\proba_{\laKqsd}\bigg(\sum_{j=0}^{   q\,(\MK(A) +1)-1\,     }X_{j}(A,K)=0\,;\,  
\sum_{j=   q\,(\MK(A) +1) +1\,      }^{   (q+1)\,(\MK(A) +1)-1\,     }
X_{j}(A,K)=0
\bigg).
\end{align*}
We apply Theorem \ref{RBD} and  the bound \eqref{supuK} with $t_{2}= (q+1)\,(\MK(A) +1)\,\etaK(A) $, $t_{1}= q (\MK(A) +1)\,\etaK(A) $ and $s=\etaK(A) $, to obtain

\begin{align*}
&\proba_{\laKqsd}\bigg(\sum_{j=0}^{   q\,[\MK(A) +1]-1\,     }X_{j}(A,K)=0\,;\,  
\sum_{j=   q\,(\MK(A) +1) +1\,      }^{   (q+1)\,(\MK(A) +1-1\,     }
X_{j}(A,K)=0\bigg)\\
&\le \Theta(q,K)\,\Gamma\big(\etaK(A)  ,K\big)
 + (1+e^{-C'' K})\Theta(q,K) 
\proba_{\laKqsd}\bigg(\sum_{j=   q\,(\MK(A) +1)+1\,    }^{   (q+1)\,(\MK(A) +1)-1 }X_{j}(A,K)=0 \bigg).
\end{align*}

Since 
$$
\sum_{j=   q\,(\MK(A) +1)+1\,     }^{   (q+1)\,(\MK(A) +1)-1\,      }
X_{j}(A,K)=0
$$
implies
$$
\tauex\ge (q+1)\,(\MK(A) +1)\; \etaK(A) ,
$$
 we get
\begin{align*}
&\proba_{\laKqsd}\left(\sum_{j=   q\,(\MK(A) +1)+1\,    }^{   (q+1)\,(\MK(A) +1)-1 }X_{j}(A,K)=0 \right)\\
&=\proba_{\laKqsd}\left(\sum_{j=   q\,(\MK(A) +1)+1\,     }^{   (q+1)\,(\MK(A) +1)-1\,      }
X_{j}(A,K)=0\,;\, \tauex\ge (q+1)\,(\MK(A) +1)\; \etaK(A)  \right)\\
&= e^{-\lambK     q\,(\MK(A) +1)\,  \,\etaK(A)  }\,
\proba_{\laKqsd}\left(\sum_{j= 0  }^{   \MK(A)  -1     }
X_{j}(A,K)=0
\right)\\
&= e^{-\lambK     q\,(\MK(A) +1)\,  \,\etaK(A)  }\,
\proba_{\laKqsd}\left(\TK_{A}>  \MK(A)  \,
\etaK(A)\right)\;. 
\end{align*}

Therefore
$$
\Theta(q+1,K) \le \Theta(q,K) \, \left\{\Gamma\big(\etaK(A)  ,K\big)
 + (1+e^{-C'' K}) \proba_{\laKqsd}\left(\TK_{A}>  \MK(A)  \,
\etaK(A)\right)\right\}.
$$
Using  \eqref{borneGamma}, Theorem \ref{loiexpo} and the definition of $\etaK(A)$, we get

$$
\frac{\Theta(q+1,K) }{\Theta(q,K) }\le \left(\frac{1}{e}+o(1)\right)< \frac{1}{2} 
$$
for $K$ large enough. This implies that  for any $K$ large enough, for any $q$
$$
 \Theta(q,K) \le 2^{-q}.
$$
The uniform exponential tightness of the random variables $(\tfrac{\TK_{A}}{ \MK(A)\, \etaK(A)})$ issued from $\laKqsd$ and their uniform integrability follow. 
\end{proof}

\medskip
 We  now relate the time scale of the exit time $\TK_{A}$ under
the initial distribution $\laKqsd$  and the large deviations potential.

\begin{theorem}\label{echelleTAK}
Under the hypotheses {\bf (HV)},
 we have for any $\epsilon>0$
and any open set $A$ of $\rpd$ satisfying Assumption ${\bf (HS)}$, that
$$
\lim_{K\to\infty}\proba_{ \laKqsd}\left(e^{K\,(V_{*}(A)-\epsilon)}\le
  \TK_{A}\le e^{K\,(V_{*}(A)+\epsilon)}\right)=1\;.
$$
\end{theorem}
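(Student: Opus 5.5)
We have from Theorem \ref{loiexpo} that $\TK_{A}/(\MK(A)\,\etaK(A))$ converges in law under $\proba_{\laKqsd}$ to an exponential law of parameter $1$, which has no atom at $0$ or at $\infty$. The plan is therefore to reduce the statement to the single estimate
\[
\MK(A)\,\etaK(A)=e^{K(V_{*}(A)+o(1))}\,,\qquad K\to\infty .
\]
Granting this, fix $\epsilon>0$. For $K$ large we have $e^{K(V_{*}(A)-\epsilon)}\le \theta\,\MK(A)\etaK(A)$ and $e^{K(V_{*}(A)+\epsilon)}\ge \theta^{-1}\MK(A)\etaK(A)$ for any fixed $\theta>0$. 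Hence the probability in the statement is at least $\proba_{\laKqsd}\big(\theta\le \TK_{A}/(\MK(A)\etaK(A))\le\theta^{-1}\big)$. This tends to $e^{-\theta}-e^{-1/\theta}$, and letting $\theta\to0$ gives the limit $1$.

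The core step is the two-sided bound on $\MK(A)\etaK(A)$. Since $\MK(A)=2^{[\alpha_{A}K]}$, it suffices to show $\frac1K\log\etaK(A)\to V_{*}(A)-\alpha_{A}\log 2$.

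For the upper bound I will redo the argument of Lemma \ref{encadre} more carefully. By \eqref{minoration} and \eqref{choix:etaK},
\[
\proba_{\nf}\big(\TK_{A}\le\etaK(A)\big)\le C^{-1}K^{d/2}\,\MK(A)^{-1}.
\]
Split $[0,\etaK(A)]$ into roughly $\etaK(A)$ unit intervals. The aim is a lower bound of the form $\proba_{\nf}(\TK_{A}\le t)\ge t\,e^{-K(V_{*}(A)+o(1))}$ for $1\le t\le e^{K(V_{*}(A)-\epsilon')}$. This is the lower large deviation bound: in each unit-order time window, a near-optimal path from near $\xf$ to $\partial A$ has probability $e^{-K(V_*(A)+o(1))}$. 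Between windows, the process returns near $\nf$ by Theorem \ref{lipG}, and the Markov property then makes the windows nearly independent trials. This yields $\etaK(A)\le e^{K(V_{*}(A)-\alpha_A\log2+o(1))}$.

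For the lower bound I will use the lower bound of \eqref{inspiration}: $\proba_{\laKqsd}(\TK_{A}\le\etaK(A))\ge \frac{1}{2\MK(A)}$ for $K$ large. On the other side, I need an upper bound $\proba_{\laKqsd}(\TK_{A}\le t)\le t\,e^{-K(V_{*}(A)-o(1))}+e^{-cK}$. To get it, first use Corollary \ref{largedevW} to restrict the initial point to $K*A'$, with $A'\Subset A$ a small neighbourhood of $\xf$; choosing $A'$ small makes $V_*$ relative to $A'$ close to $V_*(A)$. Then apply a union bound over unit intervals through Lemma \ref{decalgen}, with the per-interval probability controlled by the upper large deviation bound of \cite{SW} (Theorem 6.17 (i)), uniformly in starting points in $K*A'$. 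Together these give $\etaK(A)\ge e^{K(V_*(A)-\alpha_A\log2-o(1))}$. Note that $\alpha_A\log2<V_*(A)$ is not guaranteed by \eqref{alpha}, but Lemma \ref{deltaeta} already forces it up to $o(1)$.

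The main obstacle I expect is this uniform, per-unit-time probability $e^{-K(V_*(A)+o(1))}$ of exiting $A$ from a neighbourhood of $\nf$, in both directions. It requires the restricted-rate coupling of Theorem \ref{lipG} to transfer the Shwartz–Weiss estimates, and it requires the subtlety that \cite{SW} Assumption 6.6 is unavailable. If the lower large deviation bound proves hard to justify directly, I will instead compare with $\esperance_{\nf}(\TK_A)=e^{K(V_*(A)+o(1))}$ (used in Lemma \ref{lambdaV}). Theorem \ref{loiTAK} gives $\esperance_{\laKqsd}(\TK_A)\sim\MK(A)\etaK(A)$. Passing from $\nf$ to $\laKqsd$ is then done by the mixing estimate \eqref{melange} after a time $\deltaK$: starting from $\nf$, the process does not exit $A$ before $\deltaK$ with overwhelming probability, by Corollary \ref{iter:Kurtz}. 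After $\deltaK$ its law is close to $\laKqsd$, so $\esperance_{\nf}(\TK_A)=\esperance_{\laKqsd}(\TK_A)(1+o(1))+O(\deltaK)$, which gives the key estimate directly and makes this second route the cleaner one.
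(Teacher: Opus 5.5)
Your reduction is valid. If $\MK(A)\,\etaK(A)=e^{K(V_{*}(A)+o(1))}$, Theorem~\ref{loiexpo} gives the statement exactly as you argue. Theorem~\ref{loiTAK}, whose proof does not use the present theorem, does give $\esperance_{\laKqsd}(\TK_{A})\sim\MK(A)\,\etaK(A)$. But the key estimate is Theorem~\ref{equivtout}, which the paper \emph{deduces from} the statement you are proving. So all the content lies in an independent proof of it, and neither of your routes supplies one.

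Your first route presupposes two things: per-window exit probabilities $e^{-K(V_{*}(A)\pm o(1))}$ that are uniform over starting points near $\nf$, and a regeneration argument. Neither follows from \cite{SW} Theorem 6.17 or from the paper. Its lower-bound half also does not close. Lemma~\ref{decalgen} requires the initial law $\laKqsd$, so the restriction to $K*(A')^{c}$ has to be made in every unit window after shifting it back to time $0$. The error $\laKqsd(K*(A')^{c})$ is really of order $e^{-KV_{*}(A')}$ with $V_{*}(A')<V_{*}(A)$, and it is then paid $\etaK(A)$ times. Comparison with \eqref{inspiration} therefore yields at best $\MK(A)\,\etaK(A)\gtrsim e^{KV_{*}(A')}$. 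That half is trivial anyway, since $\esperance_{\laKqsd}(\TK_{A})\ge\laKqsd(\{\nf\})\,\esperance_{\nf}(\TK_{A})$ and \eqref{minoration} hold.

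Your second route fails at the step ``after $\deltaK$ its law is close to $\laKqsd$, so $\esperance_{\nf}(\TK_{A})=\esperance_{\laKqsd}(\TK_{A})(1+o(1))+O(\deltaK)$''. Total-variation closeness only transfers bounded functionals. For the nontrivial direction, the upper bound on $\esperance_{\laKqsd}(\TK_{A})$, you must start from $\esperance_{\nf}(\TK_{A})\ge\esperance_{\nf}\big(\un_{\{\TK_{A}>\deltaK\}}\,f(\vN(\deltaK))\big)$ with $f(\vn)=\esperance_{\vn}(\TK_{A})$. The error terms then include $\proba_{\nf}(\TK_{A}\le\deltaK)\,\sup_{\vn}f(\vn)$, and also $\Gamma(\deltaK,K)\sup_{\vn}f(\vn)$ if you use \eqref{melange}.

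The only uniform bound at hand is $\sup_{\vn}f(\vn)\le\sup_{\vn}\esperance_{\vn}(\tauex)=O(1/\lambK)$. Meanwhile $\proba_{\nf}(\TK_{A}\le\deltaK)\le\deltaK\,e^{-C(\xf,A)K}$ and $\Gamma(\deltaK,K)\approx C_{1}e^{-C_{2}K}$ involve constants unrelated to $V_{*}(\rpd)$. Nothing makes these errors $o(e^{KV_{*}(A)})$ unless you first prove $\sup_{\vn\in K*A}\esperance_{\vn}(\TK_{A})\le e^{K(V_{*}(A)+o(1))}$, which is not available.

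The fix is to apply the Markov property at $\deltaK$ and \eqref{melange} to a bounded functional. For instance, with $M=e^{K(V_{*}(A)+\epsilon)}$,
\[
\proba_{\laKqsd}\big(\TK_{A}>M-\deltaK\big)\le\frac{e^{\lambK\deltaK}}{\lauK(\nf)}\Big(\proba_{\nf}(\TK_{A}>M)+\proba_{\nf}(\TK_{A}\le\deltaK)+\Gamma(\deltaK,K)\Big)\longrightarrow0
\]
by \cite{SW} Theorem 6.17 (i), Corollary~\ref{iter:Kurtz} and \eqref{supuK}. But once you do this, the same manipulation applied directly to the two-sided event $\{e^{K(V_{*}(A)-\epsilon/2)}\le\TK_{A}\le e^{K(V_{*}(A)+\epsilon/2)}\}$ proves the theorem outright. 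That is exactly the paper's proof, and it makes the detour through $\MK(A)\,\etaK(A)$ and Theorem~\ref{loiexpo} superfluous. In the paper the implication runs the other way, towards Theorem~\ref{equivtout}.
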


This result is well known if the initial distribution is a Dirac measure (see references in the proof below). The difficulty here is to obtain the same result starting from a qsd. 
\begin{proof}
Recall that $\,
\deltaK = [K^2] < e^{K\,(V_{*}(A)-\epsilon/2)}
$, for $K$ large enough.  By the Markov property we have  by \eqref{melange} for $K$ large enough  that
\begin{align*}
&\proba_{ \nf}\left(e^{K\,(V_{*}(A)-\epsilon/2)}\le
  \TK_{A}\le e^{K\,(V_{*}(A)+\epsilon/2)}\right)\\
&\le 
\esperance_{\nf}\left(
\esperance_{\vN(\deltaK)}\left(\un_{\exp(K\,(V_{*}(A)-\epsilon/2))-\deltaK\le
  \TK_{A}\le \exp(K\,(V_{*}(A)+\epsilon/2))-\deltaK}
\right)\right)\\
&\le e^{-\lambK\,\deltaK}\,\lauK(\nf)\,  \proba_{\laKqsd}\big(
\exp(K\,(V_{*}(A)-\epsilon/2))-\deltaK\le
  \TK_{A}\le \exp(K\,(V_{*}(A)+\epsilon/2))-\deltaK \big)\\
&\hskip 0.3cm +\big(1- e^{-\lambK \deltaK}\,\lauK(\nf) \big)
+\Gamma(\deltaK,\,K)\;.
\end{align*}
Let $K$ be large enough so that
$$
e^{K\,(V_{*}(A)-\epsilon)}<e^{K\,(V_{*}(A)-\epsilon/2)}-\deltaK\, $$
and $$
e^{K\,(V_{*}(A)+\epsilon)}>e^{K\,(V_{*}(A)+\epsilon/2)}-\deltaK\;.
$$
We have 
\begin{align*}
1&\ge \proba_{ \laKqsd}\left(e^{K\,(V_{*}(A)-\epsilon)}\le
  \TK_{A}\le e^{K\,(V_{*}(A)+\epsilon)}\right)
\\&
\ge \proba_{\laKqsd}\big(
\exp(K\,(V_{*}(A)-\epsilon/2))-\deltaK\le
  \TK_{A}\le \exp(K\,(V_{*}(A)+\epsilon/2))-\deltaK\big)
\\&
\ge e^{\lambK\,\deltaK}\,\frac{1}{\lauK(\nf)}
\;\proba_{ \nf}\left(e^{K\,(V_{*}(A)-\epsilon/2)}\le
  \TK_{A}\le e^{K\,(V_{*}(A)+\epsilon/2)}\right)
\\&\hskip 0.3cm
-e^{\lambK\,\deltaK}\,\frac{1}{\lauK(\nf)}\big(1- e^{-\lambK \deltaK}\,\lauK(\nf) \big)-
e^{\lambK\,\deltaK}\,\frac{1}{\lauK(\nf)}
\;\Gamma(\deltaK,\,K)\;.
\end{align*}
From Theorem 6.17 (i) page 136 in \cite{SW} or Theorem 7.8  in \cite{KP} we have for
any $\epsilon/2>0$
$$
\lim_{K\to\infty}\proba_{ \nf}\left(e^{K\,(V_{*}(A)-\epsilon/2)}\le
  \TK_{A}\le e^{K\,(V_{*}(A)+\epsilon/2)}\right)=1\;,
$$
and the result follows using Lemma \ref{lambdaV} and Theorem
\ref{ordreV} since from our choice of $\deltaK$ (see
\eqref{choix:deltaK} and \eqref{cherlambda}), we have
$$
\lim_{K\to\infty} e^{\lambK\,\deltaK}=1
$$
and from \ref{conv:uKnf}
$$
\lim_{K\to\infty} \lauK(\nf)=1\;.
$$
\end{proof}

\begin{theorem}\label{equivtout}
Under Assumptions {\bf (HV)},
 we have for any
open  set $A$ of $\, \rpd$ satisfying Assumption ${\bf (HS)}$,
$$
\lim_{K\to\infty}\frac{\log \big(\esperance_{\laKqsd}(\TK_{A})\big)
}{K}=\lim_{K\to\infty}\frac{\log\big(\etaK(A)\;\MK(A)\big) }{K}=
V_{*}(A)\;.
$$
\end{theorem}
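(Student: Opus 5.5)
The plan is to combine Theorem \ref{loiTAK} with Theorem \ref{echelleTAK}. By Theorem \ref{loiTAK},
$$
\frac{\esperance_{\laKqsd}(\TK_{A})}{\MK(A)\,\etaK(A)}\longrightarrow 1 ,
$$
so $\log \esperance_{\laKqsd}(\TK_{A}) - \log\big(\etaK(A)\MK(A)\big)\to 0$. Hence the two limits in the statement exist simultaneously and coincide. It therefore suffices to prove that $\frac1K\log\big(\etaK(A)\MK(A)\big)\to V_{*}(A)$, or equivalently the same statement for $\esperance_{\laKqsd}(\TK_{A})$. I will establish a lower and an upper bound for this quantity separately.

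\emph{Lower bound.} Fix $\epsilon>0$. By Theorem \ref{echelleTAK},
$$
\proba_{\laKqsd}\big(\TK_{A}\ge e^{K(V_{*}(A)-\epsilon)}\big)\to 1 .
$$
Markov's inequality then gives
$$
\esperance_{\laKqsd}(\TK_{A})\ge e^{K(V_{*}(A)-\epsilon)}\,(1-o(1)),
$$
so that $\liminf_K \frac1K\log \esperance_{\laKqsd}(\TK_{A})\ge V_{*}(A)-\epsilon$ for every $\epsilon>0$.

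\emph{Upper bound.} Here I will work with $\etaK(A)\MK(A)$. Theorem \ref{loiexpo} says that $\TK_{A}/(\MK(A)\etaK(A))$ converges in law to an exponential variable of parameter $1$. Consequently
$$
\proba_{\laKqsd}\big(\TK_{A}\le \tfrac12\MK(A)\etaK(A)\big)\to 1-e^{-1/2}>0 .
$$
Suppose that along some subsequence $\MK(A)\etaK(A)\ge 2e^{K(V_{*}(A)+\epsilon)}$. Along that subsequence,
$$
\proba_{\laKqsd}\big(\TK_{A}\le e^{K(V_{*}(A)+\epsilon)}\big)\le \proba_{\laKqsd}\big(\TK_{A}\le \tfrac12\MK(A)\etaK(A)\big)\to 1-e^{-1/2}<1 .
$$
This contradicts Theorem \ref{echelleTAK}, which forces the left-hand side to tend to $1$. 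Hence $\limsup_K \frac1K\log\big(\MK(A)\etaK(A)\big)\le V_{*}(A)+\epsilon$. The symmetric argument, using $\proba_{\laKqsd}\big(\TK_{A}\ge 2\MK(A)\etaK(A)\big)\to e^{-2}<1$, reproves the lower bound directly for $\MK(A)\etaK(A)$ without going through Markov's inequality. Letting $\epsilon\to 0$ then concludes the proof.

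The argument is short, and I expect no serious obstacle. The only delicate point is to use the convergence in law from Theorem \ref{loiexpo}, rather than the first-moment convergence alone, to rule out mass escaping in the upper bound; first moments alone would not control this. The first-moment convergence, supplied by the uniform integrability in Theorem \ref{loiTAK}, is needed only to identify the two limits with each other.
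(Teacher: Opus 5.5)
Your argument is correct and is essentially the paper's proof: the paper likewise uses Theorem \ref{loiTAK} to reduce to $\etaK(A)\MK(A)$ and then plays the convergence in law of Theorem \ref{loiexpo} against Theorem \ref{echelleTAK}, except that it gets both bounds at once by intersecting the events $\{e^{K(V_{*}(A)-\epsilon)}\le \TK_{A}\le e^{K(V_{*}(A)+\epsilon)}\}$ and $\{\etaK(A)\MK(A)/4\le \TK_{A}\le 2\,\etaK(A)\MK(A)\}$ on a set of positive probability, whereas you treat each side separately by contradiction. Your one-sided version also avoids the paper's slightly optimistic constant: the second event has limiting probability $e^{-1/4}-e^{-2}\approx 0.64$, not $\ge 2/3$ as claimed there, though the intersection still has positive probability, so the paper's conclusion is unaffected.
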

\begin{proof}
From Theorem \ref{loiTAK} it is enough to prove the second equality. 

Let $\epsilon>0$, for $K$ large enough we have from 
Theorem \ref{echelleTAK}
$$
\proba_{ \laKqsd}\left(e^{K\,(V_{*}(A)-\epsilon)}\le
  \TK_{A}\le e^{K\,(V_{*}(A)+\epsilon)}\right)\ge \frac{2}{3}\;.
$$
From Theorem \ref{loiexpo}, we have for $K$ large enough 
$$
\proba_{ \laKqsd}\left(\frac{\etaK(A)\;\MK(A)}{4}\le
  \TK_{A}\le 2\, \etaK(A)\;\MK(A)\right)\ge \frac{2}{3}\;.
$$
Therefore with $\proba_{ \laKqsd}$ probability larger than $1/3$ we
have
$$
e^{K\,(V_{*}(A)-\epsilon)}\le
  \TK_{A}\le e^{K\,(V_{*}(A)+\epsilon)}\;,
$$
and
$$
\frac{\etaK(A)\;\MK(A)}{4}\le
  \TK_{A}\le 2\, \etaK(A)\;\MK(A)\;.
$$
This implies
$$
e^{K\,(V_{*}(A)-\epsilon)}\le 2\, \etaK(A)\;\MK(A)
$$
and
$$
\frac{\etaK(A)\;\MK(A)}{4}\le e^{K\,(V_{*}(A)+\epsilon)}\;.
$$
We conclude that
\begin{align*}
V_{*}(A)-\epsilon&\le\liminf_{K\to\infty}\frac{\log\big(\etaK(A)\;\MK(A)\big)
}{K} 
\\&\le \limsup_{K\to\infty}\frac{\log\big(\etaK(A)\;\MK(A)\big)}{K} 
\le V_{*}(A)+\epsilon\;.
\end{align*}
Since this is true for any $\epsilon>0$ the result follows.
\end{proof}

\medskip 
We are now interested in ergodic properties of the process starting from $\nu_{K}$. 

\begin{lemma}\label{nuAcL}
For any open subset $A$ of $\rpd$ satisfying Assumption {\bf (HS)} and any $\lambK^{-1}>L>
\deltaK$, there exist  constants $C_{1}(A)$ and $C_{2}(A)$ such that   for any $0\le s<L-\,\deltaK$
$$
\left|\esperance_{\laKqsd}\left(\un_{K*A^{c}}\big(\vN(s)\big)\; 
\un_{\tauex>L}\right)-e^{-\lambK\,L}\;\mu_{\sK}\big(K*A^{c}\big)\right|\le
C_{1}(A)\,\lambK\,\deltaK\;.
$$
If $L-\,\deltaK\le s\le L$ we have
$$
\esperance_{\laKqsd}\left(\un_{K*A^{c}}\big(\vN(s)\big)\; 
\un_{\tauex>L}\right)\le \laKqsd\big(K*A^{c}\big)\le 
C_{2}(A) \; \mu_{\sK}\big(K*A^{c}\big)\;.
$$
\end{lemma}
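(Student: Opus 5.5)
For $0\le s<L-\deltaK$, condition at time $s$ via the Markov property. Since $\un_{K*A^c}(\vN(s))$ vanishes on extinction at time $s$, we have
$$
\esperance_{\laKqsd}\big(\un_{K*A^{c}}(\vN(s))\,\un_{\tauex>L}\big)
=\esperance_{\laKqsd}\big(\un_{K*A^{c}}(\vN(s))\,\un_{\tauex>s}\,\proba_{\vN(s)}(\tauex>L-s)\big).
$$
By \eqref{invqsd} this equals $e^{-\lambK s}\sum_{\vn\in K*A^c}\laKqsd(\vn)\,\proba_{\vn}(\tauex>L-s)$.

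Since $L-s>\deltaK$, apply \eqref{melange} to the total mass (test function $\un_{\domq}$), or the sharper Lemma \ref{mieuxcor}(ii) combined with the semigroup property at time $\deltaK$. This gives, uniformly in $\vn$,
$$
\big|\proba_{\vn}(\tauex>L-s)-e^{-\lambK(L-s)}\lauK(\vn)\big|\le \Gamma(L-s,K).
$$
I would actually use Lemma \ref{mieuxmelange}, which bounds this by $C_1''e^{-C_3''(L-s)/\log K}\le C_1''e^{-C_3''\deltaK/\log K}$. Because $\deltaK=[K^2]$, this is $e^{-cK^2/\log K}$, which is much smaller than $\lambK\deltaK=e^{-\mathcal O(K)}$ by \eqref{cherlambda}.

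Multiplying by $e^{-\lambK s}$ and summing against $\laKqsd$ on $K*A^c$ yields $e^{-\lambK L}\int_{K*A^c}\lauK\,d\laKqsd=e^{-\lambK L}\mu_{\sK}(K*A^c)$ by \eqref{lamuK}. The error is at most $\laKqsd(K*A^c)\,C_1''e^{-C_3''\deltaK/\log K}$, which is bounded by $C_1(A)\lambK\deltaK$ for $K$ large. The constant $C_1(A)$ absorbs the finitely many small $K$, since all quantities are probabilities and $\lambK\deltaK$ is positive.

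The main delicacy in this range is making the error proportional to $\lambK\deltaK$ rather than merely $\Gamma$. This reduces to comparing the super-exponential decay $e^{-cK^2/\log K}$ with $\lambK\ge e^{-CK}$, so it requires the lower bound in \eqref{cherlambda}, i.e.\ that $\lambK$ is exactly exponential in $K$. The hypothesis $L<\lambK^{-1}$ is not even needed for this part, but it keeps $e^{-\lambK L}$ of order one.

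For $L-\deltaK\le s\le L$, the first inequality is immediate: bound $\un_{\tauex>L}\le 1$ and use \eqref{invqsd}, which gives $\esperance_{\laKqsd}(\un_{K*A^c}(\vN(s))\,\un_{\tauex>s})=e^{-\lambK s}\laKqsd(K*A^c)\le\laKqsd(K*A^c)$. The second inequality follows from \eqref{supuK}: $\mu_{\sK}(K*A^c)=\int_{K*A^c}\lauK\,d\laKqsd\ge C'''\laKqsd(K*A^c)$, so one may take $C_2(A)=1/C'''$.
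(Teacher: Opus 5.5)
Your proof is correct and follows essentially the paper's route. Both arguments use the Markov property and \eqref{invqsd} to reduce to a $\laKqsd$-average over $K*A^c$ of survival probabilities, then control these with the uniform mixing estimate, and handle $L-\deltaK\le s\le L$ via \eqref{invqsd} and \eqref{supuK}. The one difference is bookkeeping: you apply Lemma \ref{mieuxmelange} over the whole remaining horizon $L-s\ge\deltaK$, which gives you a sharper error. The paper instead shifts to time $0$, conditions at time $\deltaK$ and uses Lemma \ref{mieuxcor}(ii), picking up an extra harmless $\mathcal{O}(\proba_{\laKqsd}(\tauex\le\deltaK))=\mathcal{O}(\lambK\deltaK)$ term. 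The lower bound $\lambK\ge e^{-CK}$ from \eqref{cherlambda} is needed in both versions.

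One small correction: since $A\Subset\rpd$, you have $\vecz\in K*A^c$, so the indicator does not vanish on extinction. Your first identity still holds simply because $\{\tauex>L\}\subset\{\tauex>s\}$.
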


\medskip
\begin{proof}
For $0\le s<L-\,\deltaK$, from \eqref{invqsd} and the
 Markov property we have
\begin{align*}
&\esperance_{\laKqsd}\left(\un_{K*A^{c}}\big(\vN(s)\big)\; 
\un_{\tauex>L}\right)=e^{-\lambK\,s}\;\esperance_{\laKqsd}
\left(\un_{K*A^{c}}\big(\vN(0)\big)\; \un_{\tauex>L-s}\right)
\\&
=e^{-\lambK\,s}\;\esperance_{\laKqsd}
\left(\un_{K*A^{c}}\big(\vN(0)\big)\;\un_{\tauex>\,\deltaK}
\esperance_{\vN(\,\deltaK)}
\left( \un_{\tauex>L-s-\deltaK}\right)\right)
\\&
=e^{-\lambK\,s}\;\esperance_{\laKqsd}
\left(\un_{K*A^{c}}\big(\vN(0)\big)\;
\esperance_{\vN(\,\deltaK)}
\left( \un_{\tauex>L-s-\,\deltaK}\right)\right)
+\mathcal{O}\big( \laKqsd\big(\tauex\le \,\deltaK\big)\big)\;.
\end{align*}
Note that 
\begin{align*}
&\esperance_{\laKqsd}
\left(\un_{K*A^{c}}\big(\vN(0)\big)\;
\esperance_{\vN(\,\deltaK)}
\left( \un_{\tauex>L-s-\,\deltaK}\right)\right)\\
& = \sum_{m}\nu_{K}(m)\un_{K*A^{c}}(m) \sum_{n} \proba_{\vm}\big(\vN(
\deltaK)=\vn\big) \proba_{\vn}(\tauex>L-s-\,\deltaK).
\end{align*}
Then, using Corollary \ref{mieuxcor} (ii) and \eqref{invqsd} we get
$$
\Bigg|\esperance_{\laKqsd}
\left(\un_{K*A^{c}}\big(\vN(0)\big)\;
\esperance_{\vN(\,\deltaK)}
\left( \un_{\tauex>L-s-\,\deltaK}\right)\right)
-\mu_{\sK}\big(K*A^{c}\big)\;e^{-\lambK\,(L-s)}\Bigg|
\le \lambK\;.
$$
The first bound  follows using \eqref{expqsd}. Indeed,
\begin{align*}
&\left|\esperance_{\laKqsd}\left(\un_{K*A^{c}}\big(\vN(s)\big)\; 
\un_{\tauex>L}\right)-e^{-\lambK\,L}\;\mu_{\sK}\big(K*A^{c}\big)\right| \\
& \le  \left| e^{-\lambK s}\;\esperance_{\laKqsd}
\bigg(\un_{K*A^{c}}\big(\vN(0)\big)\;
\esperance_{\vN(\,\deltaK)}
\left( \un_{\tauex>L-s-\,\deltaK}\right)\right)\\
&\hskip 1cm
+\mathcal{O}\big( \laKqsd\big(\tauex\le \,\deltaK\big)\big) -e^{-\lambK\,L}\;\mu_{\sK}\big(K*A^{c}\big)\bigg|\\
&\le  \left| e^{-\lambK s}\;\esperance_{\laKqsd}
\bigg(\un_{K*A^{c}}\big(\vN(0)\big)\;
\esperance_{\vN(\,\deltaK)}
\left( \un_{\tauex>L-s-\,\deltaK}\right)\right) -e^{-\lambK\,L}\;\mu_{\sK}\big(K*A^{c}\big)\bigg|\\
&\hskip 1cm + |\mathcal{O}\big( \laKqsd\big(\tauex\le \,\deltaK\big)\big)| \\
&\le \lambK + C(A)(1- e^{-\lambK \deltaK}) \qquad (C(A) \hbox{ being a positive constant})\\
&\le C_{1}(A) \lambK \deltaK\,,
\end{align*} 
by using \eqref{expqsd} and since $\lambK \deltaK$ tends to $0$ as $K$ tends to infinity. 

\noindent The second bound follows from \eqref{invqsd} and \eqref{supuK}.

\end{proof}

We now prove a mean ergodic like result  starting from $\nu_{K}$.  Recall that $\nu_{K}$ is not an invariant measure and we will see in the next lemma that for a time large enough but not too large to avoid extinction, the time average approaches the 
measure $\mu_{K}$.

\begin{lemma}\label{tmpAc}
If
$A$  is an open subset of $\rpd$ satisfying Assumption ${\bf (HS)}$, there exists a constant $C_{3}(A)$ such that  for any $L$ with $\lambK^{-1}>L> \,\deltaK$ and
for $K$  large enough,
\begin{align}
\label{56-1}
&\left|\frac{1}{L}\;\esperance_{\laKqsd}\left(
\un_{\tauex>L}\;
\int_{0}^{L}\un_{K*A^{c}}\big(\vN(s)\big)\;ds\right)-\mu_{K}(K*A^{c})\right|
\le C_{3}(A) \Big((\lambK\,L+\,\deltaK/L)\;\mu_{\sK}\big(K*A^{c}\big)\;+
\lambK\,\deltaK\Big)\;.
\end{align}
We also have
\begin{align}
\label{56-2}
&
\frac{\frac{1}{L}\;\esperance_{\laKqsd}\left(
\un_{\tauex>L}\;
\int_{0}^{L}\un_{K*A^{c}}\big(\vN(s)\big)\;ds\right)
- C_{3}(A)\lambK\,\,\deltaK}{1+ C_{3}(A)(\lambK\,L+\,\deltaK/L)}\nonumber \\
&
\le \mu_{\sK}\big(K*A^{c}\big)\le
\frac{\frac{1}{L}\;\esperance_{\laKqsd}\left(
\un_{\tauex>L}\;
\int_{0}^{L}\un_{K*A^{c}}\big(\vN(s)\big)\;ds\right)
+C_{3}(A) (\lambK\,\,\deltaK)}{1-C_{3}(A)(\lambK\,L+\,\deltaK/L)}\;.
\end{align}
\end{lemma}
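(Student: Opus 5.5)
The plan is to integrate the pointwise-in-time estimates of Lemma \ref{nuAcL} over $s\in[0,L]$ using Fubini. We write
$$
\frac{1}{L}\esperance_{\laKqsd}\Big(\un_{\tauex>L}\int_{0}^{L}\un_{K*A^{c}}(\vN(s))\,ds\Big)
=\frac{1}{L}\int_{0}^{L}\esperance_{\laKqsd}\big(\un_{K*A^{c}}(\vN(s))\,\un_{\tauex>L}\big)\,ds
$$
and split the integral into the pieces $[0,L-\deltaK)$ and $[L-\deltaK,L]$.

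On the first piece, the first bound of Lemma \ref{nuAcL} gives an integrand equal to $e^{-\lambK L}\mu_{\sK}(K*A^{c})$ up to an error $C_{1}(A)\lambK\deltaK$. After dividing by $L$, this piece contributes
$$
\frac{L-\deltaK}{L}\,e^{-\lambK L}\,\mu_{\sK}(K*A^{c})+\mathcal{O}\big(C_{1}(A)\lambK\deltaK\big).
$$
On the second piece, the second bound of Lemma \ref{nuAcL} bounds the integrand by $C_{2}(A)\mu_{\sK}(K*A^{c})$. Since the integrand is nonnegative, this piece contributes an amount between $0$ and $C_{2}(A)(\deltaK/L)\mu_{\sK}(K*A^{c})$.

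Next we compare the result with $\mu_{\sK}(K*A^{c})$, using $0\le 1-e^{-\lambK L}\le \lambK L$ (valid since $\lambK L<1$) and $\deltaK/L\le1$. This gives
$$
\Big|\tfrac{L-\deltaK}{L}e^{-\lambK L}-1\Big|\le \lambK L+\deltaK/L .
$$
Collecting all terms yields \eqref{56-1} with $C_{3}(A)=\max\{1+C_{2}(A),\,C_{1}(A)\}$, or any larger constant.

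For \eqref{56-2}, we write $E_{L}$ for the normalized expectation and $\mu=\mu_{\sK}(K*A^{c})$. Then \eqref{56-1} reads $|E_{L}-\mu|\le C_{3}(A)\big((\lambK L+\deltaK/L)\mu+\lambK\deltaK\big)$. Each of the two one-sided inequalities is linear in $\mu$, so we solve for $\mu$:
\begin{align*}
\mu\big(1-C_{3}(A)(\lambK L+\deltaK/L)\big)&\le E_{L}+C_{3}(A)\lambK\deltaK,\\
\mu\big(1+C_{3}(A)(\lambK L+\deltaK/L)\big)&\ge E_{L}-C_{3}(A)\lambK\deltaK.
\end{align*}
Dividing gives the stated bounds. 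For the upper bound this requires the denominator to be positive, which is where ``$K$ large enough'' is used, together with a suitable regime of $L$ (for instance $\lambK L$ and $\deltaK/L$ small); otherwise the upper bound is read as trivial.

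No step is a real obstacle, since all the analytic work sits in Lemma \ref{nuAcL}. The only points needing care are the bookkeeping of constants in the boundary strip of length $\deltaK$ and the positivity of the denominator in \eqref{56-2}.
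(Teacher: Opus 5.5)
Your proposal is correct and matches the paper's proof: Fubini, splitting $[0,L]$ into $[0,L-\deltaK)$ and $[L-\deltaK,L]$, then applying the two bounds of Lemma \ref{nuAcL}. The only difference is cosmetic bookkeeping in the boundary strip; the paper compares that piece directly with $(\deltaK/L)\,\mu_{K}(K*A^{c})$ and gets the factor $C_{2}(A)+1$. Your derivation of \eqref{56-2} by solving the two linear inequalities fills in a step the paper leaves implicit, and your caveat is right: the upper bound requires $1-C_{3}(A)(\lambK L+\deltaK/L)>0$, which holds in the paper's later uses where $\lambK L\to 0$ and $\deltaK/L\to 0$.
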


\medskip
\begin{proof}
We have by Fubini's Theorem
$$
\esperance_{\laKqsd}\left(
\un_{\tauex>L}\;
\int_{0}^{L}\un_{K*A^{c}}\big(\vN(s)\big)\;ds\right)=
\int_{0}^{L}ds\;\esperance_{\laKqsd}\left(\un_{\tauex>L}\;
\un_{K*A^{c}}\big(\vN(s)\big)\right).
$$
Then we have
\begin{align*}
&\left|\frac{1}{L}\;\esperance_{\laKqsd}\left(
\un_{\tauex>L}\;
\int_{0}^{L}\un_{K*A^{c}}\big(\vN(s)\big)\;ds\right)-\mu_{K}(K*A^{c})\right|\\
& \le \left|\frac{1}{L}\;
\int_{0}^{L-\deltaK}\esperance_{\laKqsd}\left(
\un_{\tauex>L}\;
\un_{K*A^{c}}\big(\vN(s)\big)\;ds\right) ds - \frac{L-\deltaK}{L}e^{-\lambK L}\mu_{K}(K*A^{c})\right| \\
&\hskip 1cm  + \frac{L-\deltaK}{L}(1 -e^{-\lambK L})\mu_{K}(K*A^{c})\\
&\hskip 1cm   +  \left|\frac{1}{L}\;
\int_{L-\deltaK}^{L}\esperance_{\laKqsd}\left(
\un_{\tauex>L}\;
\un_{K*A^{c}}\big(\vN(s)\big)\;ds\right) ds - \frac{\deltaK}{L}\mu_{K}(K*A^{c})\right|\\
&\le  C_{1}(A) \frac{L-\deltaK}{L} \lambK\,\deltaK + \lambK\, L\,\mu_{K}(K*A^{c}) + (C_{2}(A)+1) \frac{\deltaK}{L}\, \,\mu_{K}(K*A^{c})
\end{align*}
where $C_{1}(A)$  and $C_{2}(A)$ have been introduced in Lemma \ref{nuAcL}.
\end{proof}

\medskip  We have established in Lemma \ref{largedevW}   an upper-bound for $\mu_{K}(K*A^{c})$. In the next theorem, we give a more precise asymptotics for $\mu_{K}(K*A^{c})$ and relate this quantity  to the inverse of the scale of the exit time from $A$. A similar result holds for  $\nu_{K}(K*A^{c})$.
\begin{theorem}\label{gdvAc}
Let $A$  be an open subset of $\rpd$ satisfying Assumption ${\bf (HS)}$. Then there exists a number $\rho_{A}>0$ independent of
$K$ such that for $K$ large enough
$$
\frac{\rho_{A}}{3\,K^2\,\MK(A)\,\etaK(A) }\le
\mu_{K}(K*A^{c})\le\frac{2\;\deltaK}{\MK(A)\,\etaK(A)}\;.
$$
Moreover,
$$
\lim_{K\to\infty}\frac{\log\big(\mu_{\sK}(K*A^{c})\big)}{K}
=\lim_{K\to\infty}\frac{\log\big(\laKqsd(K*A^{c})\big)}{K}=-V^{*}(A)\;.
$$
\end{theorem}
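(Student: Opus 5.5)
The plan is to compare $\mu_{\sK}(K*A^{c})$ with the probabilities of the Bernoulli variables $Z_{0}(A,K)$ and $X_{0}(A,K)$. For the upper bound I use a block decomposition inside one clumping interval. For the lower bound I use the ergodic-type identity of Lemma \ref{tmpAc} with $L=\MK(A)\,\etaK(A)$. The logarithmic asymptotics then follow from Theorem \ref{equivtout}, since $\deltaK$ and $K^{2}$ are subexponential and $\laKqsd$ and $\mu_{\sK}$ are comparable by \eqref{supuK}.

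\emph{Upper bound.} By \eqref{lamuK} and \eqref{supuK}, $\mu_{\sK}(K*A^{c})\le(1+e^{-C''K})\,\laKqsd(K*A^{c})$. Moreover $\laKqsd(K*A^{c})\le p_{\sK}:=\proba_{\laKqsd}(Z_{0}(A,K)=1)$, because $Z_{0}=1$ as soon as $\vN(0)\in K*A^{c}$. It therefore suffices to show $p_{\sK}\le(2-o(1))\,\deltaK/(\MK(A)\etaK(A))$.
\begin{enumerate}[1)]
\item Split $[0,\etaK(A)[$ into the $\JK$ blocks $[i\deltaK,(i+1)\deltaK[$ and keep only the blocks with even index. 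There are about $\JK/2$ of them, and consecutive kept blocks are separated by a gap of length $\deltaK$.
\item The event $\{X_{0}(A,K)=0\}$ is contained in the event that no kept block meets $K*A^{c}$.
\item Apply Theorem \ref{RBD} iteratively with gap $s=\deltaK$, as in the proof of Theorem \ref{loiTAK}. Together with Lemma \ref{decalgen}, \eqref{supuK} and $\Gamma(\deltaK,K)\le\lambK$ (Lemma \ref{mieuxcor}(ii)), this gives
\[
\proba_{\laKqsd}\big(X_{0}(A,K)=0\big)\le (1-p_{\sK}+\epsilon_{\sK})^{\JK/2}\,,
\]
where $\epsilon_{\sK}$ is exponentially small relative to $1/(\MK(A)\JK)$.
\item By Lemma \ref{encadre}, $\proba_{\laKqsd}(X_{0}(A,K)=0)\ge 1-1/\MK(A)$. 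Taking logarithms gives $\JK\,p_{\sK}/2\le (1+o(1))/\MK(A)$, that is $p_{\sK}\le(2+o(1))\,\deltaK/(\MK(A)\etaK(A))$.
\end{enumerate}
The main technical point is that the accumulated error $\JK\,\epsilon_{\sK}$ must be $o(1/\MK(A))$. This is where the choice \eqref{alpha} enters, through the constants $C_{2}$, $C''$ and $\lambK^{-1/3}$. Obtaining exactly the constant $2$ in place of $2+o(1)$ may need slack, for instance taking $K$ large and using the strict inequalities in \eqref{borne:MKA}.

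\emph{Lower bound.} Take $L=\MK(A)\etaK(A)$. By \eqref{asympto} we have $\lambK L\to0$, and $\deltaK/L\to0$, so Lemma \ref{tmpAc}, \eqref{56-2}, gives
\[
\mu_{\sK}(K*A^{c})\ge (1-o(1))\,\frac{1}{L}\,\esperance_{\laKqsd}\Big(\un_{\tauex>L}\int_{0}^{L}\un_{K*A^{c}}(\vN(s))\,ds\Big)-C_{3}(A)\,\lambK\deltaK\,.
\]
To bound the expectation from below, I use the strong Markov property at $\TK_{A}$ on the event $\{\TK_{A}\le L-1\}$. The state $\vN(\TK_{A})$ lies in $K*A^{c}$ within distance one of $K\partial A$. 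All jump rates there are at most $\bar c\,K$, with $\bar c$ depending only on a compact neighbourhood of $\bar A$. Hence the holding time in that state has expectation at least $1/(\bar cK)$, and it is spent outside $A$.

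By Theorem \ref{loiexpo}, $\proba_{\laKqsd}(\TK_{A}\le L-1)\to 1-e^{-1}$, and the survival restriction $\tauex>L$ costs only $O(\lambK L)$. So the expectation is at least $\rho'/K$ for some $\rho'>0$. Finally, \eqref{asympto} shows that $\lambK\deltaK$ is negligible compared with $1/(K^{2}L)$. This yields $\mu_{\sK}(K*A^{c})\ge\rho_{A}/(3K^{2}\MK(A)\etaK(A))$; the factor $K^{2}$ leaves ample room, since the argument actually gives $1/K$.

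\emph{Logarithmic asymptotics.} Theorem \ref{equivtout} gives $\log(\MK(A)\etaK(A))/K\to V_{*}(A)$, and $\log(K^{2})/K\to0$, $\log\deltaK/K\to0$. So both bounds give $\log\mu_{\sK}(K*A^{c})/K\to -V_{*}(A)$. Since $\laKqsd$ and $\mu_{\sK}$ differ by the density $\lauK$, which is bounded above and below by \eqref{supuK}, the same limit holds for $\laKqsd(K*A^{c})$.

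I expect the main obstacle to be step 3 of the upper bound: controlling the near-independence of the kept blocks sharply enough that the accumulated decorrelation error does not swamp the $1/\MK(A)$ scale.
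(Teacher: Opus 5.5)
Your lower bound and the reduction of the logarithmic limits to Theorem \ref{equivtout} and \eqref{supuK} are sound. The lower bound is essentially the paper's argument; you should restrict first to $\{\vN(0)\in K*A\}$, which costs only $\laKqsd(K*A^{c})$, so that the exit state really is adjacent to $K\partial A$. Your holding-time estimate even gives order $1/K$ rather than $1/K^{2}$.

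The gap is in step 3 of the upper bound. Each application of Theorem \ref{RBD} produces the factor $e^{-\lambK\deltaK}\,\esperance_{\laKqsd}\big(\un_{A_{1}}\,\lauK(\vN(t_{1}))\,\un_{\tauex>t_{1}}\big)$. To compare it with $\proba_{\laKqsd}(A_{1})$ you only have \eqref{supuK}, i.e.\ a relative error $e^{-C''K}$ per kept block. (Using Lemma \ref{mieuxcor}(ii) instead of \eqref{melange} inside the proof of Theorem \ref{RBD} is legitimate, and the resulting $\lambK$ errors are harmless.) Over $\JK/2$ blocks you therefore need $\MK(A)\,\JK\,e^{-C''K}\to0$. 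But $\MK(A)\JK=\MK(A)\etaK(A)/\deltaK=e^{K(V_{*}(A)+o(1))}$ by Theorem \ref{equivtout}, so this holds only if $V_{*}(A)<C''$, and nothing in the paper guarantees that. The choice \eqref{alpha} only gives $\MK(A)\ll e^{C''K}$, while $\JK$ is itself exponentially large. If $V_{*}(A)>C''$, your inequality $(1-p_{\sK}+\epsilon_{\sK})^{\JK/2}\ge 1-1/\MK(A)$ only yields $p_{\sK}\lesssim e^{-C''K}$. That loses both the stated bound and the limit $-V_{*}(A)$.

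The blocking idea works if you run it under the stationary q-process instead:
\begin{enumerate}[1)]
\item Theorem \ref{thm:mixing} gives an additive decorrelation error $C_{1}'e^{-C_{3}'g/\log K}$ across a gap $g$. This is super-exponentially small for $g=\deltaK$ (even for $g=[K^{3/2}]$), so it survives $\JK$ iterations.
\item Stationarity gives every block the same probability $\widehat p_{\sK}=\QK_{\mu_{\sK}}(Z_{0}(A,K)=1)$, and directly $\mu_{\sK}(K*A^{c})=\QK_{\mu_{\sK}}(\vN(0)\in K*A^{c})\le\widehat p_{\sK}$.
\item The only transfer to the killed process is the single estimate $\QK_{\mu_{\sK}}(X_{0}(A,K)=1)=e^{\lambK\etaK(A)}\esperance_{\laKqsd}\big(\un_{X_{0}(A,K)=1}\un_{\tauex>\etaK(A)}\lauK(\vN(\etaK(A)))\big)\le(1+o(1))/\MK(A)$, by \eqref{supuK}, \eqref{lambdaeta} and \eqref{choix:etaK}. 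So $\lauK$ enters once instead of $\JK/2$ times.
\end{enumerate}
With gaps $\deltaK$ this gives $\mu_{\sK}(K*A^{c})\le(2+o(1))\deltaK/(\MK(A)\etaK(A))$. With gaps $[K^{3/2}]$ it gives $(1+o(1))\deltaK/(\MK(A)\etaK(A))$, which also settles the exact constant $2$.

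The paper avoids iterating altogether. It splits the occupation integral over $[0,\MK(A)\etaK(A)]$ into $\MK(A)$ clumping intervals and uses the strong Markov property at the first exit in each. It bounds the remaining occupation time with Lemma \ref{mieuxcor}(ii), a crude constant from \eqref{supuK} and Lemma \ref{tmpAc}. It then closes with a self-consistent inequality for $\mu_{\sK}(K*A^{c})$ given by Lemma \ref{tmpAc} with $L=\MK(A)\etaK(A)$.
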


\medskip
\begin{proof}
The  proof of  the large deviation principle for an invariant measure in  \cite{SW} and in  \cite{VF} requires some assumptions that  do not hold as such in our case.
Therefore we cannot directly apply their results to the q-process, neither directly to 
 the
\qsd

\medskip The proof extensively uses Lemma \ref{tmpAc} with $L=\MK(A)\,\etaK(A)$. 
We have
\begin{align}
\label{occupation}
&\esperance_{\laKqsd}\left(
\un_{\tauex>\MK(A)\,\etaK(A)}\;
\int_{0}^{\MK(A)\,\etaK(A)}\un_{K*A^{c}}\big(\vN(s)\big)\;ds\right)\nonumber\\
&=\sum_{j=0}^{\MK(A)-1}\esperance_{\laKqsd}\left(
\un_{\tauex>\MK(A)\,\etaK(A)}\;
\int_{j\,\etaK(A)}^{(j+1)\,\etaK(A)}\un_{K*A^{c}}\big(\vN(s)\big)\;ds\right).
\end{align}
From the properties of the \qsd we get for any $0\le j\le\MK(A)-1 $
\begin{align*}
&\esperance_{\laKqsd}\left(
\un_{\tauex>\MK(A)\,\etaK(A)}\;
\int_{j\,\etaK(A)}^{(j+1)\,\etaK(A)}\un_{K*A^{c}}\big(\vN(s)\big)\;ds\right)\\
&=
e^{-\lambK\,j\,\etaK(A)}
\esperance_{\laKqsd}\left(
\un_{\tauex>(\MK(A)-j)\,\etaK(A)}\;
\int_{0}^{\etaK(A)}\un_{K*A^{c}}\big(\vN(s)\big)\;ds\right)\;.
\end{align*}
We now observe that if $X_{0}(A,K)=0$ we have
$$
\int_{0}^{\etaK(A)}\un_{K*A^{c}}\big(\vN(s)\big)\;ds=0\;.
$$
Therefore we have to obtain an upper and a lower bound for 
\begin{equation}
\label{occ}
\esperance_{\laKqsd}\left(\un_{X_{0}(A,K)=1}\;
\un_{\tauex>(\MK(A)-j)\,\etaK(A)}\;
\int_{0}^{\etaK(A)}\un_{K*A^{c}}\big(\vN(s)\big)\;ds\right),
\end{equation}
with $0\le j\le \MK(A)-1$.

Note that this quantity is increasing in $j$. Therefore, we will only consider $j=0$ for the lower bound and $j= \MK(A) -1$  and $j= \MK(A) -2$ for the upper bound.

For the lower bound, using the Markov property 
(and $\tauex\ge\TK_{A}$) we get
\begin{align*}
&\esperance_{\laKqsd}\left(\un_{X_{0}(A,K)=1}\;
\un_{\tauex>\MK(A)\, \,\etaK(A)}\;
\int_{0}^{\etaK(A)}\un_{K*A^{c}}\big(\vN(s)\big)\;ds\right)
\\&
=\esperance_{\laKqsd}\left(\un_{\TK_{A}<\etaK(A)}\;\esperance_{\vN(\TK_{A})}\left(
\un_{\tauex>\MK(A) \,\etaK(A)-\TK_{A}}\;\ \int_{0}^{\etaK(A)-\TK_{A}}\un_{K*A^{c}}\big(\vN(s)\big)\;ds\right)\right).
\end{align*}
Since the jump rate is bounded below on the exterior boundary $\partial_{e}(K*A)$ (the set of points in $K*A^c$ at distance $1$ from $K*A$), we deduce
that there exists a constant $\rho_{A}>0$ such that for any $K$
large enough
$$
\inf_{\vn\in \partial_{e}(K*A)}\proba_{\vn}\big(\vN(t)=
\vn\,,\;\forall\;t\in[0,1/K]\big)>\rho_{A}/K\;.
$$
Therefore
\begin{align*}
&\esperance_{\laKqsd}\left(\un_{\TK_{A}<\etaK(A)-1/K}
\un_{\tauex>\MK(A)\,\etaK(A)}\;
\int_{0}^{\etaK(A)}\un_{K*A^{c}}\big(\vN(s)\big)\;ds\right)\\
&\ge 
\esperance_{\laKqsd}\bigg(\un_{\TK_{A}<\etaK(A)-1/K} \un_{\{\vN(t) = \vN(\TK_{A}); \, \TK_{A}\le t\le \TK_{A}+\frac{1}{K}\}}
 \un_{\{\tauex>\MK(A)\,\etaK(A)\}}\;
\int_{\TK_{A}}^{\TK_{A}+1/K}\un_{K*A^{c}}\big(\vN(s)\big)\;ds\bigg)\\
&\ge \frac{\rho_{A}}{K^2}\;\proba_{\laKqsd}(\TK_{A}<\etaK(A)-1/K)\;.
\end{align*}
Thus we get
\begin{align*}
&\esperance_{\laKqsd}\left(
\un_{\tauex>\MK(A)\,\etaK(A)}\;
\int_{0}^{\MK(A)\,\etaK(A)}\un_{K*A^{c}}\big(\vN(s)\big)\;ds\right)\\
&\ge e^{-\lambK\,\MK(A)\,\etaK(A)}\MK(A)\; \frac{\rho_{A}}{K^2}\,\proba_{\laKqsd}(\TK_{A}<\etaK(A)-1/K).
\end{align*}
We also have using \eqref{invqsd} that
\begin{align*}
&\proba_{\laKqsd}\big(\etaK(A)-1/K\le \TK_{A}<\etaK(A)\big)\\
&\le \lambK\etaK(A)+
\proba_{\laKqsd}\big(\etaK(A)-1/K\le \TK_{A}<\etaK(A)\,,\; \tauex>\etaK(A)
\big)
\\ &
\le \lambK\etaK(A)+
\proba_{\laKqsd}\big(0\le \TK_{A}<1/K\,,\; \tauex>1/K
\big)
\le \lambK\etaK(A)+ \proba_{\laKqsd}\big(Z_{0}(A,K)=1\big)
\\&
\le \lambK\etaK(A)+e^{-\beta_{A}\,\sK}
\end{align*}
by Lemma \ref{borne:Z0} (Recall that $\deltaK=[K^2]$).\\

 Summing over $j$ in \eqref{occupation}, we obtain
\begin{align*}
&\esperance_{\laKqsd}\left(
\un_{\tauex>\MK(A)\,\etaK(A)}\;
\int_{0}^{\MK(A)\,\etaK(A)}\un_{K*A^{c}}\big(\vN(s)\big)\;ds\right)\\
& \ge e^{-\lambK\,\MK(A)\,\etaK(A)}\MK(A)\; \frac{\rho_{A}}{K^2}\,
\left(\;\proba_{\laKqsd}(X_{0}(A,K)=1\big)
-\,\lambK\etaK(A)-e^{-\beta_{A}\,\sK}\right)\ge \frac{\rho_{A}}{2\,K^2}
\end{align*}
for $K$ large enough using $\alpha_{A}<\beta_{A}$, Lemma
\ref{encadre} and \eqref{asympto}.\\

Using Lemma \ref{tmpAc} with $L=\MK(A)\,\etaK(A)$, we get for $K$ large enough
$$
\mu_{K}(K*A^{c})\ge
\frac{\rho_{A}/(2\,K^2\,\MK(A)\,\etaK(A))
- C_{3}\,\lambK\,\,\deltaK}{1+ C_{3}(\lambK\,\MK(A)\,\etaK(A)
+\deltaK/\MK(A)\,\etaK(A))}
 \ge \frac{\rho_{A}}{3\,K^2\,\MK(A)\,\etaK(A)}\;.
$$

\bigskip
For the upper bound in \eqref{occ},we use  Lemma \ref{tmpAc} with $L=\etaK(A)$, and first consider the largest term with $j=\MK(A)-1$.
\begin{align*}
&\esperance_{\laKqsd}\left(\un_{X_{0}(A,K)=1}\;
\un_{\tauex>\,\etaK(A)}\;
\int_{0}^{\etaK(A)}\un_{K*A^{c}}\big(\vN(s)\big)\;ds\right)\\
&\le \esperance_{\laKqsd}\left(
\un_{\tauex>\,\etaK(A)}\;
\int_{0}^{\etaK(A)}\un_{K*A^{c}}\big(\vN(s)\big)\;ds\right)\\
&
\le \etaK(A)\;\big(\big(1+C_{3}(\lambK\,\etaK(A)+\,\deltaK/\etaK(A))\big)\;
\mu_{\sK}\big(K*A^{c}\big)\;+
C_{3}\,\lambK\,\deltaK\big).
\end{align*}
The above upper bound  is too large to provide the good estimate. We need a more precise upper bound for the other terms ($j\le \MK(A)-2$), using the mixing property.

\medskip 
From the monotonicity, it is enough to consider the case  $j= \MK(A)-2$. We have
\begin{align*}
&\esperance_{\laKqsd}\left(\un_{X_{0}(A,K)=1}\;
\un_{\tauex>2\,\etaK(A)}\;
\int_{0}^{\etaK(A)}\un_{K*A^{c}}\big(\vN(s)\big)\;ds\right)\\
&=\esperance_{\laKqsd}\Bigg(\un_{\TK_{A}<\etaK(A)}\;\esperance_{\vN(\TK_{A})}
\left(\un_{\tauex>2\,\etaK(A)-\TK_{A}}\;
\int_{0}^{\etaK(A)-\TK_{A}}\un_{K*A^{c}}\big(\vN(s)\big)\;ds\right)\Bigg)\;.
\end{align*}

We will estimate from above 
  for each $\vn\in \partial_{e}(K*A)$ and
 each $0\le u<\etaK(A)$ the quantity
$$
\esperance_{\vn}\left(
\un_{\tauex>2\,\etaK(A)-u}\;
\int_{0}^{\etaK(A)-u}\un_{K*A^{c}}\big(\vN(s)\big)\;ds\right)\;.
$$
If $\etaK(A)-2\,\deltaK\le u\le \etaK(A)$  we have
$$
\esperance_{\vn}\left(
\un_{\tauex>2\,\etaK(A)-u}\;
\int_{0}^{\etaK(A)-u}\un_{K*A^{c}}\big(\vN(s)\big)\;ds\right)
\le 2\,\deltaK\;.
$$
If $0\le u\le \etaK(A)-2\,\deltaK$ we have
\begin{align*}
&\esperance_{\vn}\left(
\un_{\tauex>2\,\etaK(A)-u}\;
\int_{0}^{\etaK(A)-u}\un_{K*A^{c}}\big(\vN(s)\big)\;ds\right)\\
&\le
\deltaK+ 
\esperance_{\vn}\left(
\un_{\tauex>2\,\etaK(A)-u}\;
\int_{\deltaK}^{\etaK(A)-u}\un_{K*A^{c}}\big(\vN(s)\big)\;ds\right)\\
&
\le \deltaK+ 
\esperance_{\vn}\Bigg(\esperance_{\vN(\,\deltaK)}\Bigg(
\un_{\tauex>2\,\etaK(A)-u-\,\deltaK}\times 
\left(\int_{0}^{\etaK(A)-u-\,\deltaK}\un_{K*A^{c}}
\big(\vN(s)\big)\;ds\right)\Bigg)
\\
&
\le \deltaK+ 2\,\esperance_{\laKqsd}\Bigg(
\un_{\tauex>2\,\etaK(A)-u-\,\deltaK}\times\int_{0}^{\etaK(A)-u-\,\deltaK}\un_{K*A^{c}}
\big(\vN(s)\big)\;ds\Bigg)
+\lambK\,\etaK(A)
\end{align*}
for $K$ large enough by Corollary \ref{mieuxcor} (ii) and
\eqref{supuK}.

Using Lemma \ref{tmpAc} we get (since
$2\,\etaK(A)-u-\,\deltaK>\,\deltaK$)
$$
\esperance_{\laKqsd}\left(
\un_{\tauex>2\,\etaK(A)-u-\,\deltaK}\;
\int_{0}^{\etaK(A)-u-\,\deltaK}\un_{K*A^{c}}\big(\vN(s)\big)\;ds\right)
$$
$$
\le \etaK(A)\;C_{3} \, \big(
(\lambK\,\etaK(A)+1)\;\mu_{\sK}\big(K*A^{c}\big)\;+
\lambK\,\deltaK)\big)\;.
$$
Therefore we get for $0\le u\le \etaK(A)$
$$
\sup_{\vn\in\partial_{e}(K*A)}\esperance_{\vn}\left(
\un_{\tauex>(\MK(A)-j)\,\etaK(A)-u}\;
\int_{0}^{\etaK(A)-u}\un_{K*A^{c}}\big(\vN(s)\big)\;ds\right)
$$
$$
\le \;\deltaK+
 \etaK(A)\;C_{3}\,\big(
(\lambK\,\etaK(A)+1)\;\mu_{\sK}\big(K*A^{c}\big)+
\lambK\,\deltaK \big)\;.
$$
Hence
\begin{align*}
&\esperance_{\laKqsd}\left(\un_{X_{0}=1}\;
\un_{\tauex>(\MK(A)-j)\,\etaK(A)}\;
\int_{0}^{\etaK(A)}\un_{K*A^{c}}\big(\vN(s)\big)\;ds\right)
\\&\le \proba_{\laKqsd}(X_{0}=1)\;\big[
\deltaK+
 \etaK(A)\;C_{3}\,\big(
(\lambK\,\etaK(A)+1)\;\mu_{\sK}\big(K*A^{c}\big)+
\lambK\,\deltaK \big)
\big]
\\&
\le \frac{1}{\MK(A)}\;\big[
\;\deltaK+
 \etaK(A)\;C_{3}\,\big(
(\lambK\,\etaK(A)+1)\;\mu_{\sK}\big(K*A^{c}\big)+
\lambK\,\deltaK \big)
\big]
\end{align*}
by Lemma  \ref{encadre}.

Summing over $j$ we get 
\begin{align*}
&\esperance_{\laKqsd}\left(
\un_{\tauex>\MK(A)\,\etaK(A)}\;
\int_{0}^{\MK(A)\,\etaK(A)}\un_{K*A^{c}}\big(\vN(s)\big)\;ds\right)\\
&\le 
\;\deltaK+
C  \etaK(A)\;\big(
(\lambK\,\etaK(A)+1+\,\deltaK/\etaK(A))\;\mu_{\sK}\big(K*A^{c}\big)+
\lambK\,\deltaK\big)\;,
\end{align*}
for $C$ a suitable constant.
Using Lemma \ref{tmpAc} with $L=\MK(A)\,\etaK(A)$ we get for $K$ large
enough
that
$$
\mu_{K}(K*A^{c})\le\frac{2\, \;\deltaK}{\MK(A)\,\etaK(A)}\;.
$$
The first part of the Theorem is established. \\

\noindent The second part follows  from Theorem \ref{equivtout} and \eqref{supuK}.

\end{proof}

\section{Some global aspects of the trajectories.}
\label{sec:5}

In this section we derive some properties of the trajectories on larger
time scales.

\subsection{Extension of Theorem \ref{MainBD}.}

We can extend the result of Theorem \ref{MainBD} starting the
observation at a positive (diverging) time.

\begin{corollary}\label{decaldeb}
Let 
$A$    be  a subset of $\rpd$ satisfying Assumption {\bf (HS)}. 
Let $L(\sbullet)$ be a positive function such that
$$
\limsup_{K\to\infty}\lambK\;L(K)=0\;.
$$
Then
under the hypotheses {\bf (HV)}, 
with our previous choice of the integers 
$\etaK(A)$ and  $\MK(A)$,  for any fixed $s>0$ 
$$
\lim_{K\to\infty}d_{\mathrm{TV}}\left(
\mathscr{L}^{(\proba_{\laKqsd})}
\bigg(\sum_{j=0}^{[\MK(A)\,s]}X_{j}(A,K)\circ\theta_{L(K)}\bigg)
\,,\,\mathrm{Poisson}(s)\right)=0\;.
$$
\end{corollary}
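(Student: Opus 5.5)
The plan is to reduce the shifted statement to Theorem \ref{MainBD} through the quasi-stationarity relation \eqref{invqsd}, exactly as in Lemma \ref{decalgen}. Set $W=\sum_{j=0}^{[\MK(A)\,s]}X_{j}(A,K)$ and $t_{K}=([\MK(A)\,s]+1)\,\etaK(A)$, so that $W$ is $\mathscr{F}^{K}_{0,t_{K}}$-measurable and $W\circ\theta_{L(K)}$ is measurable with respect to $\mathscr{F}^{K}_{L(K),L(K)+t_{K}}$. For each integer $q\ge0$, the Markov property at time $L(K)$ combined with \eqref{invqsd} gives
\begin{equation*}
\proba_{\laKqsd}\big(W\circ\theta_{L(K)}=q\,;\,\tauex>L(K)\big)=e^{-\lambK L(K)}\,\proba_{\laKqsd}(W=q)\;,
\end{equation*}
since the law of $\vN(L(K))$ restricted to $\{\tauex>L(K)\}$ is $e^{-\lambK L(K)}\laKqsd$.

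Next, by \eqref{expqsd}, the event $\{\tauex\le L(K)\}$ has probability $1-e^{-\lambK L(K)}\le\lambK L(K)$, which tends to $0$ by hypothesis. Summing over $q$ then yields
\begin{equation*}
d_{\mathrm{TV}}\Big(\mathscr{L}^{(\proba_{\laKqsd})}(W\circ\theta_{L(K)})\,,\,\mathscr{L}^{(\proba_{\laKqsd})}(W)\Big)\le 2\big(1-e^{-\lambK L(K)}\big)+\big(1-e^{-\lambK L(K)}\big)\;.
\end{equation*}
The first term bounds the contribution of $\{\tauex\le L(K)\}$, counted once in each of the two laws. The second comes from the factor $e^{-\lambK L(K)}$ multiplying $\sum_q\proba_{\laKqsd}(W=q)=1$.

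On the event $\{\tauex\le L(K)\}$ the process is absorbed at $\vecz\in K*A^{c}$, so $W\circ\theta_{L(K)}=[\MK(A)s]+1$ there. This value is a legitimate outcome and is simply included in the error mass; no special treatment is needed. The triangle inequality with Theorem \ref{MainBD} then concludes the proof.

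I do not expect any real obstacle. The only point to check is the measurability and shift identity, namely that $\theta_{L(K)}$ maps the event $\{W=q\}$ into the future $\sigma$-field, so that the Markov property applies. Note also that the argument does not need $\lambK t_{K}\to0$, because Theorem \ref{MainBD} already handles that part.
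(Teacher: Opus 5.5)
Your proof is correct, but it takes a different route from the paper's. The paper goes through the q-process. The shifted Poisson limit under $\QK_{\mu_{\sK}}$ is immediate from Theorem \ref{MainBDq} and the time invariance of $\QK_{\mu_{\sK}}$. It is then transferred back to $\proba_{\laKqsd}$ by Lemma \ref{connect} with horizon $t(K)=([\MK(A)\,s]+1)\,\etaK(A)+L(K)$. This transfer needs $\lambK\,t(K)\to0$, which holds by the hypothesis on $L$ and \eqref{asympto}. It also needs, as in the proof of Theorem \ref{MainBD}, a separate disposal of the event $\{\tauex\le t(K)\}$, since that lemma lives on the $\sigma$-fields $\mathscr{F}^{K,*}_{0,t}$.

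You instead stay with the killed process. Conditioning at time $L(K)$ and using \eqref{invqsd} gives the exact identity $\proba_{\laKqsd}(W\circ\theta_{L(K)}=q\,;\,\tauex>L(K))=e^{-\lambK L(K)}\,\proba_{\laKqsd}(W=q)$. This is the mechanism behind Lemma \ref{decalgen}, but applied at the shift time only. As a result, the extra error is $O(\lambK L(K))$ and the window length $t_K$ does not enter it. The cruder bound $2\lambK\,(L(K)+t_K)$ of Lemma \ref{decalgen} would also have worked here.

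Your reduction is more elementary and quantitative. It needs neither the q-process nor the comparison lemma, and it treats the extinction event explicitly: the shifted count equals $[\MK(A)s]+1$ there, and that mass goes into the error. The paper's approach buys a uniform template: any functional controlled under the stationary q-process can be shifted for free and then pulled back, the same device it uses elsewhere, e.g.\ for Theorem \ref{gdechelle}.

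A cosmetic remark: in your total-variation accounting, the event $\{\tauex\le L(K)\}$ appears only on the side of the shifted law, so $2(1-e^{-\lambK L(K)})$ already suffices. Your extra term is harmless since it is only an upper bound.
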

\begin{proof}
We first note that using Theorem \ref{MainBDq} and the  time invariance of the q-process,
we have immediately that 
$$
\lim_{K\to\infty}d_{\mathrm{TV}}\left(
\mathscr{L}^{(\QK_{\mu_{\sK}})}
\bigg(\sum_{j=0}^{[\MK(A)\,s]}X_{j}(A,K)\circ\theta_{L(K)}\bigg)
\,,\,\mathrm{Poisson}(s)\right)=0\;.
$$

The corollary follows from Lemma \ref{connect} formulae \eqref{distancepq} and \eqref{distancetv} with 
$$
t(K) = ([\MK(A)\,s]+1)\etaK + L(K)
$$
using the triangle
inequality for the variation distance.

 \end{proof}

\subsection{Beyond the Poisson regime.}

Let $A$    be  a subset of $\rpd$ satisfying Assumption {\bf (HS)}. 
We consider a time scale much larger than
$\etaK(A)\;\MK(A) $. Intuitively, a Poisson distribution with large
parameter behaves like a Gaussian distribution centered on the
parameter and with variance equal to the parameter. 

We derive a similar result in our setting for a time scale slightly
larger than the average exit time of $K*A$.

\begin{theorem}\label{CLTgt}
Let $A$  be an open subset of $\rpd$ satisfying Assumption {\bf (HS)}.
There exists a constant $C_{A}>0$ such that for any
$$
\alpha\in (V_{*}(A),C_{A}+V_{*}(A))\;,
$$
 the random variable
$$
\frac{\sum_{j=0}^{[\exp(\alpha \,K)/\etaK(A)]}X_{j}(A,\,K)
-e^{\alpha \, K}\MK(A)^{-1}\etaK(A)^{-1}}{
e^{\alpha \, K/2}\MK(A)^{-1/2}\etaK(A)^{-1/2}}
$$
converges in law (for the distribution $\proba_{\laKqsd}$), as $K$
tends to infinity, to a standard normal random variable. 
\end{theorem}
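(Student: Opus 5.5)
Set $N_{K}=[e^{\alpha K}/\etaK(A)]$ and $S_{K}=\sum_{j=0}^{N_{K}}X_{j}(A,K)$. By Theorem \ref{equivtout}, $e^{\alpha K}/(\MK(A)\etaK(A))=e^{K(\alpha-V_{*}(A)+o(1))}\to\infty$, so the centering $m_{K}=e^{\alpha K}\MK(A)^{-1}\etaK(A)^{-1}$ diverges. The total time horizon is $t(K)=(N_{K}+1)\etaK(A)\approx e^{\alpha K}$. I will choose $C_{A}$ so that $\lambK e^{\alpha K}\to0$; by Lemma \ref{lambdaV} it suffices to take $C_{A}\le (V_{*}(\rpd)-V_{*}(A))/2$. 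Then Lemma \ref{connect}, formula \eqref{distancetv}, lets me replace $\proba_{\laKqsd}$ by the stationary q-process $\QK_{\mu_{\sK}}$, up to an error tending to zero in total variation. The extinction event before $t(K)$ has probability at most $\lambK t(K)\to0$, exactly as in the proof of Theorem \ref{MainBD}. It therefore suffices to prove the CLT under $\QK_{\mu_{\sK}}$.

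Under the q-process I will use a blocking (Bernstein) argument. I cut $\{0,\dots,N_{K}\}$ into consecutive blocks of $[\MK(A)s_{K}]$ indices, where $s_{K}\to\infty$ slowly. The number of blocks is $n_{K}\approx m_{K}/s_{K}$, and it must still diverge. Between consecutive blocks I remove one index. The removed indices contribute at most $n_{K}$ terms with expectation $n_{K}\QK_{\mu_{\sK}}(X_{0}=1)\approx n_{K}/\MK(A)$, which is $o(\sqrt{m_{K}})$. Let $W_{i}$ denote the block sums. By Corollary \ref{mix}, the $W_{i}$ are $\phi$-mixing with coefficient $\phi_{K}(2)\le C_{1}'e^{-C_{3}'\etaK(A)/\log K}$. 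This coefficient is super-exponentially small, because $\etaK(A)\ge\deltaK=K^{2}$ (Lemma \ref{deltaeta}). Hence the characteristic function of $\sum_{i}W_{i}$ differs from the product of the individual characteristic functions by at most $n_{K}\phi_{K}(2)$. I need this to tend to zero, i.e. $e^{\alpha K}\ll e^{C_{3}'\etaK(A)/\log K}$, and this holds since $\etaK(A)/\log K\ge K^{2}/\log K\gg K$. It then remains to apply a Lindeberg CLT to i.i.d.-like copies of $W_{1}$.

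For the individual blocks, Theorem \ref{MainBDq} with parameter $s$ gives $W_{1}\Rightarrow\mathrm{Poisson}(s)$, and the variance computation in its proof gives mean $s_{K}(A)$ and $\mathrm{Var}=s_{K}(A)+o(1)$. I need these statements with $s=s_{K}$ growing slowly. For this I redo the estimate \eqref{eq:formula} with $s$ replaced by $s_{K}$. Every error term carries an exponentially small factor, except the terms $[\MK(A)s]\,\QK_{\mu_{\sK}}(X_{0}=1)^{2}\approx s^{2}/\MK(A)$ and the $I_{1}$ correction of order $s^{2}/\MK(A)$. These are still $o(1)$, even after multiplication by $n_{K}$, provided $s_{K}$ grows slower than a small exponential. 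The result is $E W_{i}=s_{K}\,\MK(A)\QK_{\mu_{\sK}}(X_{0}=1)$ and $\mathrm{Var}W_{i}=s_{K}(1+o(1))$. For Lindeberg, the third moments are bounded by $C s_{K}$ (Poisson-like, via the same Chen bound or a direct moment bound), so the Lyapunov ratio is $n_{K}s_{K}/(n_{K}s_{K})^{3/2}\to0$.

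The main obstacle is matching the centering. The exact mean is $(N_{K}+1)\QK_{\mu_{\sK}}(X_{0}=1)$, and replacing it by $m_{K}$ requires $|\MK(A)\QK_{\mu_{\sK}}(X_{0}=1)-1|\,m_{K}=o(\sqrt{m_{K}})$. By \eqref{distancepq} and \eqref{inspiration}, this error is exponentially small, of order
\[
e^{-C''K}+\lambK\etaK(A)+\MK(A)\proba_{\laKqsd}(Z_{0}(A,K)=1)+\MK(A)\lambK\etaK(A),
\]
using Corollary \ref{starstar}. So the condition becomes $\sqrt{m_{K}}\times(\text{this error})\to0$, which is precisely what forces $\alpha<V_{*}(A)+C_{A}$ with $C_{A}>0$ small. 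Concretely, I take $C_{A}$ to be a small fraction of $\min\{\beta_{A}-\alpha_{A},\,C''-\alpha_{A},\,(V_{*}(\rpd)-V_{*}(A))/2\}$, adjusted using \eqref{borne:MKA}. I must also control the variance accumulated over the whole range uniformly in this regime, with lag-one covariances bounded as in $I_{2}$. I expect this bookkeeping of exponential rates to be the delicate part.
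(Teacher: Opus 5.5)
Your outline is sound, but it takes a genuinely different route from the paper. The paper does no blocking. It reruns Chen's estimate \eqref{eq:formula} for the whole sum $\sum_{j=0}^{[e^{\alpha K}/\etaK(A)]}X_j(A,K)$ under $\QK_{\mu_{\sK}}$, now with the diverging parameter $s_K\approx e^{\alpha K}/(\MK(A)\etaK(A))$. Because of the prefactor $\min\{s_K^{-1/2},1\}$, the bracket only has to be $o(\sqrt{s_K})$; this is the ``careful bookkeeping'' that restricts $\alpha$. For instance, the term $10[\MK(A)s]\,\QK_{\mu_{\sK}}(X_0(A,K)=1)^2\approx s_K/\MK(A)$ forces $\sqrt{s_K}\ll\MK(A)$. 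The outcome is total-variation closeness to a Poisson law with diverging parameter, which is transferred to $\proba_{\laKqsd}$ by \eqref{distancetv}. The CLT then follows from the Gaussian limit of a normalized Poisson variable, together with the same centering check you single out.

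The paper thus gets a stronger intermediate statement almost for free, from an estimate it has already proved. Your Bernstein-block argument needs only per-block Poisson estimates at bounded or slowly growing $s$, plus the super-exponential $\phi$-mixing of Corollary \ref{mix}. It is the more standard and robust route, at the price of handling the gaps and proving a separate moment bound.

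Three details need repair.

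\begin{enumerate}
\item Chen's total-variation bound gives no control of moments, since a block can be as large as $[\MK(A)s_K]$. The Lyapunov moment must instead come from a direct expansion of $\EQK_{\mu_{\sK}}(W_1^3)$ using Corollary \ref{mix}, as in the $I_1/I_2$ split. This gives $O(s_K^3)$, which is enough: the Lyapunov ratio is then $O(s_K^{3/2}n_K^{-1/2})\to0$.
\item The discarded indices contribute about $m_K/(s_K\MK(A))$. This is $o(\sqrt{m_K})$ only under an extra condition such as $\alpha-V_{*}(A)<2\alpha_A\log 2$ (for bounded $s_K$), which is missing from your list for $C_A$. Note also that $\MK(A)=2^{[\alpha_A K]}$, so your rates should involve $\alpha_A\log 2$ rather than $\alpha_A$. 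Alternatively, discard only a time gap of length $\deltaK$ at the start of each block, as with $X'_1$ in the paper. The discarded mass is then controlled by $\QK_{\mu_{\sK}}(Z_0(A,K)=1)$.
\item The within-block variance errors should not be multiplied by $n_K$. A relative error $o(1)$ in $\mathrm{Var}\,W_1$ is all the CLT needs. After multiplication by $n_K$ these errors are of order $m_K/\MK(A)$, which is small only under a further restriction on $\alpha$, not because $s_K$ grows slowly.
\end{enumerate}
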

\begin{proof}
From Theorem \ref{equivtout}, the
 assumption   $\alpha > V_{*}(A)$ implies that the quantity  
 $$
\exp(\alpha \,K)/\etaK(A)\MK(A)
$$
 tends to infinity with $K$.  The proof follows from a careful bookkeeping of the estimates  \eqref{eq:formula} in the proof of Theorem \ref{MainBD}. 
We then apply Lemma \ref{connect} formula \eqref{distancetv} and  the  convergence in law  of a suitably normalized Poisson
distribution to a normal distribution when the parameter diverges, see
for example \cite{handbookPoisson}.  
\end{proof}

In order to investigate longer times scales, we will look at the total
amount of time a trajectory has spent outside a set $A$ on a given 
time interval. 

\begin{theorem}\label{gdechelle}
Let $A$    be  a subset of $\rpd$ satisfying Assumption {\bf (HS)}.  Let 
$$
V_{*}(A)<\alpha<\alpha_{*}=-\limsup_{K\to\infty}\frac{\log(\lambK)}{K}\;.
$$
Let $\mathcal{T}(A,\,\alpha,\, K,\,(\vN(\vsbullet)))$ be the amount time a 
trajectory spends outside $A$ in the time interval 
$\big[0,\,\exp(K\,\alpha) \big]$, namely
$$
\mathcal{T}(A,\,\alpha,\,
K,\,(\vN(\vsbullet)))=\int_{0}^{\exp(K\,\alpha) }
\un_{K*A^{c}}\big(\vN(s)\big)\,ds\;.
$$
Then for any $\epsilon>0$
$$
\lim_{K\to\infty}\proba_{\laKqsd}\left(\frac{\mathcal{T}(A,\,\alpha,\,
 K,\,(\vN(\vsbullet)))}{e^{\alpha\,K}}
\in\big[(1-\epsilon)\,\mu_{\sK}\big(K*A^{c}\big),\,
(1+\epsilon)\,\mu_{\sK}\big(K*A^{c}\big)\big]
\right)=1.
$$
\end{theorem}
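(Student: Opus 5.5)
We prove a weak law of large numbers by a second-moment (Chebyshev) argument. Set $L=e^{\alpha K}$ and $\mathcal{T}=\mathcal{T}(A,\alpha,K,(\vN(\vsbullet)))$. Since $\alpha<\alpha_*$, we have $\lambK L\to 0$, so $\proba_{\laKqsd}(\tauex\le L)\le \lambK L\to0$ by \eqref{expqsd}. It therefore suffices to work on the event $\{\tauex>L\}$. By Lemma \ref{connect} (formula \eqref{distancetv} with $t(K)=L$), we may equivalently compute under the stationary q-process $\QK_{\mu_{\sK}}$, whose one-time marginal is exactly $\mu_{\sK}$.

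\emph{First moment.} Lemma \ref{tmpAc} gives
\[
\frac1L\esperance_{\laKqsd}\big(\un_{\tauex>L}\mathcal{T}\big)=\mu_{\sK}(K*A^c)\big(1+O(\lambK L+\deltaK/L)\big)+O(\lambK\deltaK).
\]
The error $\lambK\deltaK$ is negligible relative to $\mu_{\sK}(K*A^c)$. Indeed, Theorem \ref{gdvAc} gives $\mu_{\sK}(K*A^c)=e^{-K(V_*(A)+o(1))}$, and $\lambK\le e^{-K(V_*(\rpd)+o(1))}$ with $V_*(\rpd)>V_*(A)$ (Theorem \ref{ordreV}, Lemma \ref{lambdaV}). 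Hence the mean is $(1+o(1))\,L\,\mu_{\sK}(K*A^c)$.

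\emph{Second moment.} This is the key step. We bound
\[
\widehat{\mathds E}^{(K)}_{\mu_{\sK}}(\mathcal{T}^2)=2\int\!\!\int_{0\le s<t\le L}\QK_{\mu_{\sK}}\big(\vN(s)\in K*A^c,\ \vN(t)\in K*A^c\big)\,ds\,dt .
\]
We split according to whether $t-s\ge r_K$ or $t-s<r_K$, with $r_K=K^3$ (any $r_K\gg\log^2K$ that is subexponential will do).

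For $t-s\ge r_K$, Theorem \ref{phimelange} and the Markov property give
\[
\QK_{\mu_{\sK}}\big(\vN(s)\in K*A^c,\ \vN(t)\in K*A^c\big)\le \mu_{\sK}(K*A^c)\Big(\mu_{\sK}(K*A^c)+C_1'e^{-C_3'r_K/\log K}\Big).
\]
Here $e^{-C_3'K^2}\ll\mu_{\sK}(K*A^c)$, so this region contributes at most $(1+o(1))L^2\mu_{\sK}(K*A^c)^2$.

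For $t-s<r_K$, we bound the joint probability crudely by $\mu_{\sK}(K*A^c)$. This region then contributes at most $2Lr_K\,\mu_{\sK}(K*A^c)$.

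Combining the two regions,
\[
\mathrm{Var}(\mathcal{T})\le o\big(L^2\mu_{\sK}(K*A^c)^2\big)+2Lr_K\,\mu_{\sK}(K*A^c).
\]
Dividing by the squared mean, the relative variance is $o(1)+2r_K/(L\mu_{\sK}(K*A^c))$. By Theorem \ref{gdvAc},
\[
L\,\mu_{\sK}(K*A^c)\ge e^{K(\alpha-V_*(A)+o(1))}/(3K^2),
\]
which grows exponentially because $\alpha>V_*(A)$. So the relative variance tends to $0$, and Chebyshev's inequality gives concentration under $\QK_{\mu_{\sK}}$.

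\emph{Transfer back to $\proba_{\laKqsd}$.} The event in the statement is $\mathscr F^{K,*}_{0,L}$-measurable on $\{\tauex>L\}$, so \eqref{distancetv} moves the probability estimate from $\QK_{\mu_{\sK}}$ back to $\proba_{\laKqsd}$ with $o(1)$ error.

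\emph{Main obstacle.} The delicate point is keeping the mixing error $C_1'e^{-C_3' r_K/\log K}$ small relative to $\mu_{\sK}(K*A^c)^2$ in the long-range region. Since $\mu_{\sK}(K*A^c)$ is only exponentially small in $K$, this forces $r_K$ to grow polynomially in $K$ (about $K^2\log K$ suffices), while the short-range region still requires $r_K\ll L\mu_{\sK}(K*A^c)$. Both constraints can be met, but only because $\alpha>V_*(A)$ strictly. We must also check that the $\lambK$-corrections in Lemma \ref{connect} are $o(1)$ precisely because $\alpha<\alpha_*$.
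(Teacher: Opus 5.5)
Your proposal is correct and follows essentially the paper's own route: reduce to the stationary q-process via Lemma \ref{connect} (using $\lambK e^{\alpha K}\to 0$), then apply Chebyshev with the pair integral split at a polynomial gap (the paper uses $\deltaK=[K^2]$, you use $K^3$), the $\phi$-mixing bound for far pairs, the crude bound $\mu_{\sK}(K*A^c)$ for near pairs, and Theorem \ref{gdvAc} to make $e^{\alpha K}\mu_{\sK}(K*A^c)$ grow exponentially. Two small remarks: your parenthetical claim that any subexponential $r_K\gg\log^2K$ would do is false---you need $r_K/\log K\gg K$ so that $e^{-C_3'r_K/\log K}\ll\mu_{\sK}(K*A^c)=e^{-K(V_*(A)+o(1))}$, as your ``main obstacle'' paragraph correctly says---and the first-moment detour through Lemma \ref{tmpAc} is unnecessary, since under $\QK_{\mu_{\sK}}$ the mean is exactly $e^{\alpha K}\mu_{\sK}(K*A^c)$ by stationarity.
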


\begin{proof}
Let $\epsilon>0$ be fixed.

Noting first that since
$$\proba_{\laKqsd}\left(\tauex\le e^{K\,\alpha}\right)\le \lambK\; e^{K\,\alpha}\ ,$$
which goes to $0$ by assumption, 
it is enough to prove that 
$$\proba_{\laKqsd}\left(\left| \frac{\mathcal{T}(A,\,\alpha,\,
 K,\,(\vN(\vsbullet)))}{e^{\alpha\,K}}-\mu_{\sK}\big(K*A^{c}\big)\right|
  >\epsilon\,\mu_{\sK}\big(K*A^{c}\big),\,\tauex>e^{K\,\alpha}\right)
$$
 tends to $0$ as $K$ tends to infinity.

It follows from Lemma \ref{connect} formula \eqref{distancepq} that it
is enough to prove that
$$
\QK_{\mu_{\sK}}\left(\left| \frac{\mathcal{T}(A,\,\alpha,\,
 K,\,(\vN(\vsbullet)))}{e^{\alpha\,K}}-\mu_{\sK}\big(K*A^{c}\big)\right|
  >\epsilon\,\mu_{\sK}\big(K*A^{c}\big)\right)
$$
tends to $0$ as $K$ tends to infinity. 

Note that this differs from the ergodic theorem in the sense that we
consider only the finite time interval $[0,\exp(K\,\alpha)]$.

Let
$$
\moyenne=\EQK_{\mu_{\sK}}\left(\mathcal{T}(A,\,\alpha,\,
 K,\,(\vN(\vsbullet)))\right).
$$
By the time invariance of $\QK_{\mu_{\sK}}$ and Theorem \ref{gdvAc} we have
\begin{equation}\label{moyenne}
\moyenne=e^{\sK\,\alpha}\mu_{\sK}\big(K*A^{c}\big)= e^{K\,(\alpha-V_{*}(A)+o(1))}\;.
\end{equation}

The result will follow from Chebyshev's inequality if we can show that
\begin{equation}\label{desir}
\lim_{K\to\infty} \frac{\EQK_{\mu_{\sK}}\big(\mathcal{T}(A,\,\alpha,\,
 K,\,(\vN(\vsbullet)))^{2}\big)-\moyenne^{2}}{\moyenne^{2}}=0\,.
\end{equation}

For the second moment we have
\begin{align*}
&\EQK_{\mu_{\sK}}\left(\mathcal{T}(A,\,\alpha,\,
 K,\,(\vN(\vsbullet)))^{2}\right)
=2\int_{0\le s_{1}\le s_{2}\le \exp(\sK\,\alpha) }
\EQK_{\mu_{\sK}}\big(
\un_{K*A^{c}}\big(\vN(s_{1})\big)\;\un_{K*A^{c}}\big(\vN(s_{2})\big)
\big)\,ds_{1}\,ds_{2}\;.
\end{align*}
We will split the double integral into two terms in order to be able to
use the decorrelation. 

We have
$$
\int_{0\le s_{1}\le s_{2}\le \exp(\sK\,\alpha) }
\EQK_{\mu_{\sK}}\big(
\un_{K*A^{c}}\big(\vN(s_{1})\big)\;\un_{K*A^{c}}\big(\vN(s_{2})\big)\big)
\,ds_{1}\,ds_{2}=I_{1}+I_{2}
$$
with
\begin{align*}
I_{2}&=\int_{0\le s_{1}\le s_{2}\le (s_{1}+\,\deltaK)\wedge \exp(\sK\,\alpha) }
\EQK_{\mu_{\sK}}\big(
\un_{K*A^{c}}\big(\vN(s_{1})\big)\;\un_{K*A^{c}}\big(\vN(s_{2})\big)
\big)\,ds_{1}\,ds_{2}\\
& \le \deltaK\; \int_{0}^{\exp(\sK\,\alpha) }\EQK_{\mu_{\sK}}\big(
\un_{K*A^{c}}\big(\vN(s_{1})\big)\;\big)\;ds_{1}
=  \deltaK\;\moyenne\;,
\end{align*}
using \eqref{moyenne}.

In the term $I_{1}$ we use the Markov property and get
\begin{align*}
&I_{1}=\int_{ s_{1}\ge 0 \,;\, s_{1}+\,\deltaK\le s_{2}\le
  \exp(\sK\,\alpha) }
\EQK_{\mu_{\sK}}\bigg(
\un_{K*A^{c}}\big(\vN(s_{1})\big)
\;\esperance_{\vN(s_{1}+\,\deltaK)}\big(
\un_{K*A^{c}}\big(\vN(s_{2}-s_{1}-\,\deltaK)\big)\big)\bigg)\,ds_{1}\,ds_{2}\;.
\end{align*}

Using Theorem \ref{thm:mixing} and \eqref{moyenne}  we have
$$
I_{1}=I_{3}+I_{3}'
$$
with 
$$
I_{3}'\le \moyenne\;e^{\alpha\sK} \;C'_{1}\,e^{-\deltaK\;C'_{3}/\log K}.
$$

We have using the time invariance of $\QK_{\mu_{\sK}}$
\begin{align*}
I_{3}&=\int_{ s_{1}\ge 0 \,;\, s_{1}+\,\deltaK\le s_{2}\le
  \exp(\sK\,\alpha) }
\EQK_{\mu_{\sK}}\big(
\un_{K*A^{c}}\big(\vN(s_{1})\big) 
\;\EQK_{\mu_{\sK}}\big(
\un_{K*A^{c}}\big(\vN(s_{2}-s_{1}-\,\deltaK)\big)\big)\big)\,ds_{1}\,ds_{2}\;\\
&=\mu_{\sK}\big(K*A^{c}\big)^{2}\;
 \int_{ s_{1}\ge 0 \,;\, s_{1}+\,\deltaK\le s_{2}\le
  \exp(\sK\,\alpha)
}ds_{1}\,ds_{2}\\
&=\frac{1}{2}\;\mu_{\sK}\big(K*A^{c}\big)^{2}
\left(e^{\sK\,\alpha}-\deltaK\right)^{2}\\
&=\frac{1}{2}\,\moyenne^{2}-\deltaK\, \mu_{\sK}\big(K*A^{c}\big)
\moyenne+\frac{1}{2} \mu_{\sK}\big(K*A^{c}\big)^{2}\,\deltaK^{2}\;.
\end{align*}

Summarizing we have  obtained the upper bound
\begin{align*}
&\EQK_{\mu_{\sK}}\left(\mathcal{T}(A,\,\alpha,\,
 K,\,(\vN(\vsbullet)))^{2}\un_{\tauex>e^{K\,\alpha}}\right)-\moyenne^{2}\\
\le &
 2 \deltaK\, \moyenne+ 2\moyenne\;e^{\alpha\sK}
      \;C'_{1}\,e^{-\deltaK\;C'_{3}/\log K}
+\mu_{\sK}\big(K*A^{c}\big)^{2}\,\deltaK^{2}.
\end{align*}
We conclude using the condition on  $\alpha$  and \eqref{choix:deltaK} that
$$
\lim_{K\to\infty}\frac{\esperance_{\laKqsd}\left(\mathcal{T}(A,\,\alpha,\,
 K,\,(\vN(\vsbullet)))^{2}\un_{\tauex>e^{K\,\alpha}}\right)-\moyenne^{2}
}{\moyenne^{2} }=0\;.
$$
As explained before the result follows using Chebyshev's inequality.
\end{proof}

\subsection{Asymptotic record profile}

For simplicity we  consider only the case of dimension one,
similar arguments can be developed for higher dimensions.

\begin{figure}[!t]
\begin{center}
    \includegraphics[width=10cm]{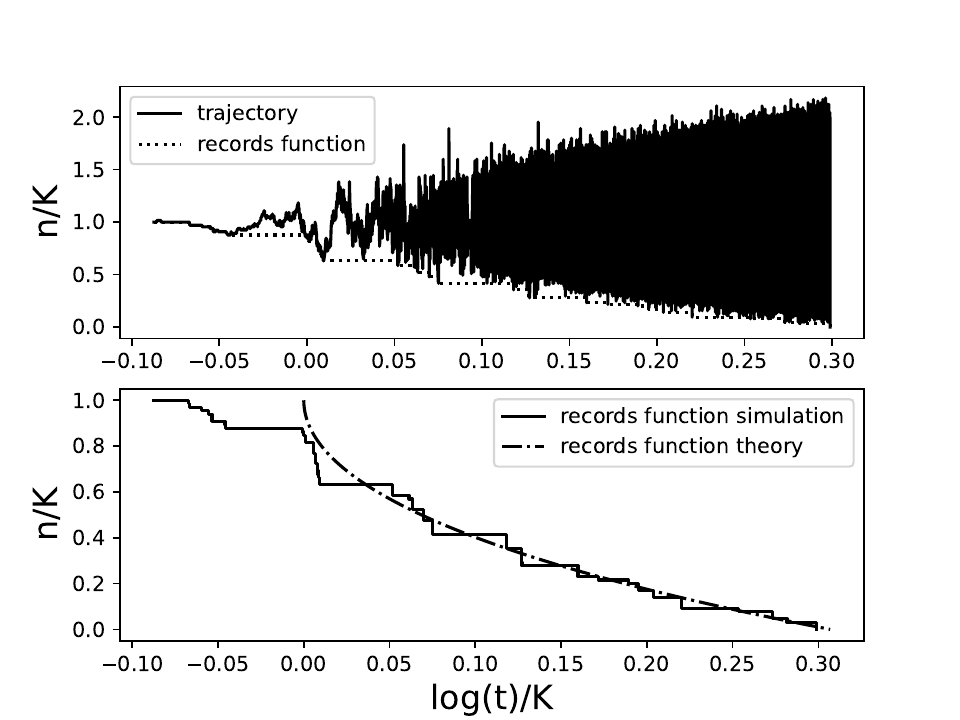}
\end{center}
    \caption{A simulation for a one specie model with $K=65$,
$B(x)=2\,x$ and $D(x)=x+x^{2}$ and initial condition $\nf$.
 Top: trajectory of
      the process up to extinction.   Bottom :  records as functions
      of time. The values of $K$ used in the simulations are limited because
computation time grows exponentially fast with $K$.
}
    \label{figtout}
\end{figure}

\medskip
Given a trajectory $(\NK(\vsbullet))$ of the process, we define the record process for $s\ge0$ by
$$
r_{K}\big(s,\, \big(\NK(\vsbullet)\big)\big)=\inf_{0\le v\le
  \exp(K\,s)-1}\frac{\NK(v)}{K}\;. 
$$
For fixed $K$ this is a random function of $s$.

Let us introduce the (lower) record profile $r^{*}(\vsbullet)$, defined  for $0\le s\le V_{*}\big((0,\,\infty)\big)$ 
by 
$$
r^{*}(s)=\min\big\{z\le \xf,\; s=V_{*}\big((z,\,\infty)\big)\big\}\;.
$$
In dimension one, due to the explicit expression in  \ref{1dformule}
this is the inverse of the continuous strictly decreasing function
$$
V_{*}\big((u,\,\infty)\big)=\int_{u}^{\xf}\log\big(B(y)/D(y)\big)\;dy\;.
$$
Therefore the function $r^{*}$
 is strictly decreasing and continuous.
 Note that $V_{*}\big((\xf,\,\infty)\big) = 0$, hence  $r^{*}(0)=\xf$, and also $r^{*}(V_{*}\big((0,\,\infty)\big)) = 0$.

\begin{theorem}\label{conv:rec}
When the process $(\NK)$ starts  from $\laKqsd$, 
the random variable 
$$
\sup_{0\le s\le V_{*}((0,\,\infty))}\big|r_{K}\big(s,\, 
\big(\NK(\vsbullet)\big)\big)-r^{*}(s)\big|
$$
converges to zero in probability when $K$ tends to infinity.
\end{theorem}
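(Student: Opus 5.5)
The strategy is to reduce the uniform statement to pointwise control on a finite grid of values of $s$, and to use monotonicity to fill in between grid points. Both $s\mapsto r_K(s)$ (an infimum over a growing time window) and $r^*$ are non-increasing, and $r^*$ is continuous on the compact interval $[0,V_*((0,\infty))]$, hence uniformly continuous. Fix $\epsilon>0$ and choose a grid $0=s_0<s_1<\dots<s_m=V_*((0,\infty))$ such that $r^*(s_{i})-r^*(s_{i+1})<\epsilon$. Suppose that for each $i$, with probability tending to one,
$$
r^*(s_i)-\epsilon\le r_K(s_i)\le r^*(s_i)+\epsilon .
$$
Then for $s\in[s_i,s_{i+1}]$ monotonicity gives $r_K(s)\le r_K(s_i)\le r^*(s_i)+\epsilon\le r^*(s)+2\epsilon$, and similarly $r_K(s)\ge r_K(s_{i+1})\ge r^*(s_{i+1})-\epsilon\ge r^*(s)-2\epsilon$. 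The theorem therefore reduces to the pointwise two-sided bound at each fixed $s$.

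Fix $s\in(0,V_*((0,\infty)))$ and set $z=r^*(s)$. In dimension one, $V_*((u,\infty))=\int_u^{x_*}\log(B/D)$ is strictly decreasing in $u$. The event $\{r_K(s)<z-\epsilon\}$ is the event that the process leaves $A_-=(z-\epsilon,\infty)$ before time $e^{Ks}-1$. Since these sets are unbounded, I would replace $A_-$ by a bounded set satisfying (HS), such as $(z-\epsilon,R')$ with $R'$ large. An upper exit is then much less likely, since the potential is large, and this can be controlled by Corollary \ref{largedevW} together with the argument of Lemma \ref{borne:Z0}. With this replacement, $V_*(A_-)>s$, so Theorem \ref{echelleTAK} gives $\proba_{\laKqsd}(\TK_{A_-}\le e^{Ks})\to0$.

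Conversely, the event $\{r_K(s)>z+\epsilon\}$ is the event that the process does not leave $A_+=(z+\epsilon,R')$ before time $e^{Ks}-1$ (upper exits again being negligible). Here $V_*(A_+)<s$, and Theorem \ref{echelleTAK} gives $\TK_{A_+}\le e^{K(V_*(A_+)+\eta)}<e^{Ks}-1$ with probability tending to one, for $\eta$ small. The endpoint cases need separate treatment. At $s=0$ we have $r_K(0)=N(0)/K$, and by Corollary \ref{largedevW} this lies near $x_*$. At $s=V_*((0,\infty))$ the lower bound is trivial, since $r_K\ge0$, and the upper bound follows by monotonicity from a grid point close to the endpoint.

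The main obstacle is the correct treatment of the boundary conventions. First, the sets $(u,\infty)$ do not satisfy (HS), so the truncation at $R'$ must be checked to leave $V_*$ unchanged. For this I would argue that in dimension one the quasi-potential for exiting through the top is larger than that for exiting through the bottom, and use (HV5) to make the top potential arbitrarily large. Second, near $s=V_*((0,\infty))$ the sets approach $0$, where (HS) fails. I would handle this purely through monotonicity together with $r^*(V_*((0,\infty)))=0$, so that Theorem \ref{echelleTAK} is only ever invoked for sets bounded away from $0$ and from infinity.
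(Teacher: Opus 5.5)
Your proposal is correct and follows essentially the paper's route. The paper proves pointwise convergence at each fixed $s$ (its Lemma \ref{zparz}) by applying Theorem \ref{echelleTAK} to exit times from sets $(u,\infty)$, treating the endpoints separately, and then obtains the uniform statement from a finite grid, uniform continuity of $r^{*}$ and monotonicity of $r_K$ in $s$; you do the same. Your concern that $(u,\infty)$ fails (HS) is legitimate, since the paper passes over it. However, Corollary \ref{largedevW} together with Lemma \ref{borne:Z0} is not the right tool for ruling out exits through the top, because their unquantified exponential rates need not beat $e^{Ks}$. Instead, apply Theorem \ref{echelleTAK} itself to an interval $(a,R')$, with $a>0$ small and $R'$ large so that $V_{*}((a,R'))>s$; this is possible for every $s<V_{*}((0,\infty))$. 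That is exactly the estimate of your lower-bound step, and the endpoint $s=V_{*}((0,\infty))$ is covered by your monotonicity argument.
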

See Figure \ref{figtout}-Bottom for a numerical illustration of this result. \\

We first prove a preliminary lemma.

\begin{lemma}\label{zparz}
When the process $(\NK)$ starts  from $\laKqsd$, 
for each $z\in[0,\,V_{*}((0,\,\infty))]$ the random variable 
$r_{K}\big(z,\, \big(\NK(\vsbullet)\big)\big)$ converges to $r^{*}(z)$
in probability when $K$ tends to infinity. In other words, for any $\rho>0$
$$
\lim_{K\to\infty}\proba_{\laKqsd}\left(\big|r_{K}\big(z,\,
  \big(\NK(\vsbullet)\big)\big)-r^{*}(z)\big|>\rho\right)=0\;.
$$
\end{lemma}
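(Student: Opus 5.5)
The plan is to reduce the statement about the running minimum to statements about exit times from half-lines. In dimension one, for $u<\xf$, the event $\{r_{K}(z,(\NK(\vsbullet)))<u\}$ means that the process $\NK/K$ has left $(u,\infty)$ before time $e^{Kz}-1$. Equivalently, we have $\TK_{(u,\infty)}\le e^{Kz}-1$, where $T$ denotes the exit time from the half-line. The sets $(u,\infty)$ are not bounded, so I will replace them by bounded intervals $A_u=(u,R')$ with $R'$ large. Leaving through the upper end $R'$ has a potential that can be made larger than anything relevant (coming down from infinity, (HV5)); choosing $R'$ so that $V(\xf,R')>V_{*}((0,\infty))+1$ gives $V_{*}(A_u)=V_{*}((u,\infty))$. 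Upward excursions do not affect the minimum, so up to an event of vanishing probability we can identify $\{r_K(z)<u\}$ with $\{\TK_{A_u}\le e^{Kz}-1\}$. Each such $A_u$ satisfies (HS), since $X$ points inward at both ends for $0<u<\xf<R'$.

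Fix $z$ in the open interval and $\rho>0$, and set $u_\pm=r^{*}(z)\pm\rho$, with $u_-$ truncated at some small positive level if needed. Since $V_{*}((u,\infty))$ is continuous and strictly decreasing, we get $V_{*}(A_{u_+})<z<V_{*}(A_{u_-})$. Choose $\epsilon$ smaller than both gaps and apply Theorem \ref{echelleTAK}. For $A_{u_+}$, with probability tending to one we have $\TK_{A_{u_+}}\le e^{K(V_*(A_{u_+})+\epsilon)}<e^{Kz}-1$, so $r_K(z)\le u_+$. For $A_{u_-}$, with probability tending to one we have $\TK_{A_{u_-}}\ge e^{K(V_*(A_{u_-})-\epsilon)}>e^{Kz}$, so $r_K(z)\ge u_-$. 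Together these give $|r_K(z)-r^*(z)|\le\rho$ with probability tending to one.

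The endpoints need separate treatment. At $z=0$ the horizon $e^{0}-1=0$ gives $r_K(0)=\NK(0)/K$, which concentrates at $\xf=r^*(0)$ by Corollary \ref{largedevW} applied to a small interval around $\xf$. At $z=V_{*}((0,\infty))$ the lower bound is trivial because $r_K\ge0$. The upper bound follows from the interior argument applied with $u_+=\rho$ and the monotonicity of $r_K$ in $z$: if $z'<z$ satisfies $r^*(z')<\rho$, then $r_K(z)\le r_K(z')\le\rho$ with high probability.

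The step I expect to be the main obstacle is justifying that $V_{*}((u,R'))$ coincides with the half-line potential $\int_u^{\xf}\log(D/B)$. This requires showing that the upper exit is never cheaper, via Remark \ref{1dformule} and the monotonicity of Theorem \ref{ordreV}. A related check is that Theorem \ref{echelleTAK} applies near $u\to0$, which is why I keep $u_->0$ and handle the right endpoint by monotonicity.
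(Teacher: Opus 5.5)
Your proof is correct and follows essentially the paper's route: both translate $\{r_K(z)>u\}$ and $\{r_K(z)<u\}$ into statements about the exit time of $\NK/K$ from $(u,\infty)$ and apply Theorem \ref{echelleTAK} just above and just below $r^{*}(z)$. The endpoints are handled in the same spirit: the paper treats small $z$ through the concentration of $\laKqsd$ at $\xf$ (you use Corollary \ref{largedevW} at $z=0$), and at $z=V_{*}((0,\infty))$ it compares with the slightly smaller potential $V_{*}((\rho,\infty))$ (you use monotonicity in $z$). Replacing the half-line by $(u,R')$ is a genuine gain in rigor, since the paper applies Theorem \ref{echelleTAK} to unbounded sets that do not satisfy (HS). The step you leave implicit is that $\NK/K$ does not reach $R'$ before time $e^{Kz}$. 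This follows from the same theorem applied to $(a,R')$ with $0<a<r^{*}(z)$, for which $V_{*}((a,R'))>z$. Two small corrections: take $\rho<\xf-r^{*}(z)$ without loss of generality so that $\xf\in A_{u_+}$, and note that the half-line potential is $\int_u^{\xf}\log(B/D)$, not $\int_u^{\xf}\log(D/B)$.
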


\begin{proof}

It is enough to assume $0<\rho<\xf/2$.

We will consider separately the two cases 
$0\le z<V_{*}((0,\,\infty))$ and $z=V_{*}((0,\,\infty))$.

In the case $0\le z<V_{*}((0,\,\infty))$
we have
\begin{align*}
&\proba_{\laKqsd}\left(\big|r_{K}\big(z,\,
  \big(\NK(\vsbullet)\big)\big)-r^{*}(z)\big|>\rho\right)
\\&=\proba_{\laKqsd}\left(r_{K}\big(z,\,\big(\NK(\vsbullet)\big)\big)
>r^{*}(z)+\rho\right)
+\proba_{\laKqsd}\left(r_{K}\big(z,\,\big(\NK(\vsbullet)\big)\big)
<r^{*}(z)-\rho\right),
\end{align*}
and we will show that these two numbers tend to zero when $K$ tends to
infinity. 

For the first term, let $V_{*}((0,\,\infty))>\zeta>0$ be defined by
$r^{*}(\zeta)=\xf-\rho/2$. If $0\le z\le \zeta$ we have by the
monotonicity of $r^{*}(z)$ and $r_{K}\big(z,\,\big(\NK(\vsbullet)\big)$ in $z$, that
$$
\proba_{\laKqsd}\left(r_{K}\big(z,\,\big(\NK(\vsbullet)\big)\big)
>r^{*}(z)+\rho\right)\le 
\proba_{\laKqsd}\left(r_{K}\big(z,\,\big(\NK(\vsbullet)\big)\big)
>\xf+\rho/2\right)
$$
$$
\le \proba_{\laKqsd}\left(r_{K}\big(0,\,\big(\NK(\vsbullet)\big)\big)
>\xf+\rho/2\right)=\laKqsd\left(\NK(0)>K\xf+K\,\rho/2\right)\;.
$$
 
From Theorem 3.7 in \cite{CCM1}, denoting $\nf=[K\xf]$,  we have 
$$
\laKqsd\big(n-\nf>[K^{2/3}])\le \frac{\Oun}{\sqrt{K}}+\Oun
\frac{\sum_{p>\nf+[K^{2/3}]}^{\infty}
e^{-(p-\nf)^{2}}/(2\,K\,\sigma^{2})}{\sqrt{K}}\;,
$$ 
with 
$$
\sigma=\left(\frac{d}{dx}\log\frac{D(x_{*})}{B(x_{*})}\right)^{-1/2}<\infty\;.
$$
This implies
$$
\lim_{\sK\to\infty} \laKqsd\big(n>\nf+[K^{2/3}])=0\;,
$$
which implies
$$
\lim_{\sK\to\infty}\laKqsd\left(\NK(0)>K\,\xf+K\,\rho/2\right)=0\;,
$$
since for $K$ large enough $K\,\rho/2>[K^{2/3}] $.

For $\zeta < z < V_{*}((0,\,\infty))$
we observe that
$$
\big\{r_{K}\big(z,\,\vsbullet\big)>r^{*}(z)+\rho\big\}=
\left\{\NK(s)> K\,(r^{*}(z)+\rho),\; 0\le s\le e^{K\,z}-1\right\}\;.
$$
We have $r^{*}(z)<r^{*}(\zeta)<\xf$, hence we can find $z'<z$ such that 
$r^{*}(z)<r^{*}(z')<r^{*}(z)+\rho$ and $r^{*}(z')<\xf$. We have
\begin{align*}
\left\{\NK(s)> K\,(r^{*}(z)+\rho),\; 0\le s\le e^{K\,z}-1\right\}
&\subset \left\{\NK(s)> K\,r^{*}(z'),\; 0\le s\le e^{K\,z}-1\right\}
\\&
=\left\{\TK_{(r^{*}(z'),\,\infty)}>e^{K\,z}-1\right\}\;.
\end{align*}
It follows from  Theorem \ref{echelleTAK} that
$$
\lim_{K\to\infty}\proba_{\laKqsd}\left(r_{K}\big(z,\,\big(\NK(\vsbullet)\big)\big)
>r^{*}(z)+\rho\right)\le \lim_{K\to\infty}
\proba_{\laKqsd}\left(\TK_{(r^{*}(z'),\,\infty)}>e^{K\,z}-1\right)=0
$$
since $z>V_{*}((r^{*}(z'),\,\infty))=z'$.
This shows that for all $z\neq V_{*}((0,\,\infty))$,
$$\lim_{K\to\infty}\proba_{\laKqsd}\left(r_{K}\big(z,\,\big(\NK(\vsbullet)\big)\big)
>r^{*}(z)+\rho\right)=0.$$

\medskip For the second term, since
$r_{K}\big(z,\,\big(\NK(\vsbullet)\big)\ge0$, we only have to consider
$r^{*}(z)-\rho>0$, namely 
$z< z_{\rho}$ with $r^{*}(z_{\rho})=\rho$ which implies
$z_{\rho}<V_{*}((0,\,\infty))$. 

We observe that
\begin{align*}
\big\{r_{K}\big(z,\,\vsbullet\big)<r^{*}(z)-\rho\big\}&=
\left\{\exists\; 0\le s\le e^{K\,z}-1,\;
\NK(s)< K\,(r^{*}(z)-\rho)\right\}
\\&
=\left\{\TK_{(r^{*}(z)-\rho,\,\infty)}<e^{K\,z}-1\right\}\;.
\end{align*}

Since $z<z_{\theta}$ we have $V_{*}((r^{*}(z)-\rho,\,\infty))>z$ 
and by Theorem \ref{echelleTAK}
$$
\lim_{K\to\infty}\proba_{\laKqsd}\left(r_{K}\big(z,\,\big(\NK(\vsbullet)\big)\big)
<r^{*}(z)-\rho\right)=0\;.
$$
This implies that the second term vanishes for $z\neq V_{*}((0,\,\infty))$ when $K$ tends to infinity.

\medskip
For the case $z=V_{*}((0,\,\infty))$ we have
\begin{align*}
\proba_{\laKqsd}\left(\big|r_{K}\big(z,\,
  \big(\NK(\vsbullet)\big)\big)-r^{*}(z)\big|>\rho\right)
&=\proba_{\laKqsd}\left(r_{K}\big(V_{*}((0,\,\infty)),\,
  \big(\NK(\vsbullet)\big)\big)>\rho\right)
\\&=\proba_{\laKqsd}\left(\NK(s)> K\,\rho,\; 0\le s\le e^{K\,V_{*}((0,\,\infty))}-1
\right)
\\&
=\proba_{\laKqsd}\left(\TK_{(\rho,\infty)}>e^{K\,V_{*}((0,\,\infty))}-1\right)
\end{align*}
which tends to $0$ when $K$ tends to infinity from Theorem
\ref{echelleTAK} since $V_{*}((0,\,\infty))>V_{*}((\rho,\,\infty))$.

\end{proof}

\begin{proof}[Proof of Theorem \ref{conv:rec}]

Since the interval  $[0,\,V_{*}((0,\infty))]$ is compact, the function
$r^{*}(s)$ is uniformly continuous on this interval. 
Fix $\rho>0$, we can find a
finite increasing sequence $0=z_{1}<\ldots<z_{p}=V_{*}((0,\infty))$ of points
such that for any $1\le q\le p-1$
$$
\big|r^{*}(z_{q+1})-r^{*}(z_{q})\big|<\frac{\rho}{2}\;.
$$
In particular, for each $z\in [0,\,V_{*}((0,\infty))]$,
 there exists $1\le q\le p-1$
such that $z\in [z_{q},\,z_{q+1}]$, hence
$$
r^{*}(z_{q})\ge r^{*}(z)\ge r^{*}(z_{q+1})\;.
$$
Since $r_{K}\big(s,\, \big(\NK(\vsbullet)\big)$
is nonincreasing in $s$ we have for such a $z$ and $q$
\begin{align*}
 r_{K}\big(z,\, \big(\NK(\vsbullet)\big)- r^{*}(z)&\le 
r_{K}\big(z_{q},\, \big(\NK(\vsbullet)\big)-
r^{*}(z_{q})+r^{*}(z_{q})-r^{*}(z)
\\ &\le r_{K}\big(z_{q},\, \big(\NK(\vsbullet)\big)-
r^{*}(z_{q})+ \frac{\rho}{2}\;.
\end{align*}
Similarly
\begin{align*}
r_{K}\big(z,\, \big(\NK(\vsbullet)\big)- r^{*}(z)&\ge 
r_{K}\big(z_{q+1},\, \big(\NK(\vsbullet)\big)-
r^{*}(z_{q+1})+r^{*}(z_{q+1})-r^{*}(z)
\\
&\ge r_{K}\big(z_{q+1},\, \big(\NK(\vsbullet)\big)-
r^{*}(z_{q+1})-\frac{\rho}{2}\;.
\end{align*}
Therefore, for any $1\le q\le p-1$
\begin{align*}
\proba_{\laKqsd}
\left(\sup_{z_{q}\le z\le z_{q+1}}\big|r_{K}\big(z,\,
\big(\NK(\vsbullet)\big)\big)-r^{*}(z)\big|>
\rho/2\right)
&\le \proba_{\laKqsd}
\left(\big|r_{K}\big(z_{q},\,
\big(\NK(\vsbullet)\big)\big)-r^{*}(z_{q})\big|>
\rho\right)
\\&
\hskip 0.5cm +\proba_{\laKqsd}
\left(\big|r_{K}\big(z_{q+1},\,
\big(\NK(\vsbullet)\big)\big)-r^{*}(z_{q+1})\big|>
\rho\right)\;.
\end{align*}
This implies
$$
\proba_{\laKqsd}
\left(\sup_{0\le s\le V_{*}((0,\,\infty))}\big|r_{K}\big(s,\, 
\big(\NK(\vsbullet)\big)\big)-r^{*}(s)\big|>\rho/2\right)
\le 2 \sum_{j=1}^{p} \proba_{\laKqsd}
\left(\big|r_{K}\big(z_{j},\,
\big(\NK(\vsbullet)\big)\big)-r^{*}(z_{j})\big|>
\rho\right)
$$
which tends to $0$ when $K$ tends to infinity by Lemma \ref{zparz}.
The result follows. 

\end{proof}

In words, if one draws with a finite precision the graph of $\NK(t)$
as a function of $\log(t)/K$, for large $K$, the picture is completely
black above the graph of the record profile $r^{*}$ at least until
$\xf$ (up 
to the other upper branch of $V_{*}^{-1}\big(\vsbullet,\infty)$) 
while the picture is
completely white below.

\appendix

\section{Proof of theorem \ref{ordreV}.}\label{appendice1}

For the convenience of the reader we recall the setting and the statement.

\begin{theorem}

Let $A$ and $D$ open subsets of $\rpd$ satisfying Assumption ${\bf (HS)}
$ and $ A\Subset D$. 

Let $V_{*}(A)$ (respectively $V_{*}(D)$) denote the large deviation
potential for $A$ (respectively $D$),  we have
$$
0<V_{*}(A)<V_{*}(D)\;.
$$
\end{theorem}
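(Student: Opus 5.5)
We have to show $V_{*}(A)>0$ and $V_{*}(A)<V_{*}(D)$. We work with the local rate function $I_0^t(\phi)=\int_0^t L(\phi,\dot\phi)\,ds$, where the Lagrangian $L(x,v)=\sup_{\theta}\big(\langle\theta,v\rangle-H(x,\theta)\big)$ is built from the Hamiltonian $H(x,\theta)=\sum_j B_j(x)(e^{\theta_j}-1)+D_j(x)(e^{-\theta_j}-1)$. The function $L(x,\cdot)$ is nonnegative, convex, and vanishes only at $v=\vX(x)$. On the compact set $\bar D$, which lies in the interior of $\rpd$, the rates $B_j$ and $D_j$ are bounded above and bounded below by a positive constant. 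It follows that on $\bar D$ we have a uniform quadratic lower bound $L(x,v)\ge c\min(\|v-\vX(x)\|^2,\|v-\vX(x)\|)$ and a local upper bound $L(x,v)\le C\|v-\vX(x)\|^2$ for $\|v-\vX(x)\|\le 1$. Since any path from $\xf$ to $\partial A$ stays in $\bar A$ until it first hits $\partial A$, we may restrict attention to such truncated paths. This also justifies the usual reductions of \cite{SW}, which are used here only on a compact set.

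\emph{Positivity.} By (HV3), $\xf$ is a nondegenerate attractor: $\langle \vX(x),x-\xf\rangle\le -\beta'\|x-\xf\|^2$ near $\xf$. Fix a small ball $B_r(\xf)\subset A$. Any path from $\xf$ to $\partial A$ must cross the annulus from radius $r/2$ to radius $r$. Along such a crossing the function $\|\phi-\xf\|^2$ increases by $3r^2/4$, while the flow pushes it down at rate at least $\beta' r^2/4$. Write $\dot\phi=\vX(\phi)+w$. Then $\int\langle w,\phi-\xf\rangle$ must be at least of order $r^2$ plus $\beta' r^2 T/4$, where $T$ is the crossing time. By Cauchy--Schwarz and the lower bound on $L$, this forces $\int L\ge c'>0$ uniformly in $T$: if $T$ is short, $\|w\|$ must be large; if $T$ is long, the integral accumulates. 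This gives $V_*(A)\ge c'>0$.

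\emph{Strict monotonicity.} Let $\phi$ be any path from $\xf$ to $\partial D$. It first hits $\partial A$ at some point $y$, so its cost is at least $V(\xf,y)+V(y,\partial D)\ge V_*(A)+\inf_{y\in\partial A}V(y,\partial D)$. It therefore suffices to show that $\inf_{y\in\partial A}V(y,\partial D)>0$. Because $A\Subset D$, the distance $\epsilon=d(\partial A,\partial D)$ is positive. The key point is that a segment of length at least $\epsilon$ inside $\bar D\setminus A$ costs a uniformly positive amount. This follows from (HS) applied to $A$, or more simply from the following argument. Consider the Lyapunov-type functional given by the distance traveled against the flow. Since $\vX$ is bounded by $M$ on $\bar D$, a path covering distance $\epsilon$ in time $T$ satisfies $\int\|w\|\ge\epsilon-MT$. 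If $T\le\epsilon/(2M)$, then $\int\|w\|\ge\epsilon/2$, and the lower bound on $L$ together with Jensen gives cost at least $c''>0$. If $T$ is large, we need that the path cannot drift along the flow from $\partial A$ to $\partial D$. This is where (HS) enters: the flow points strictly inward on $\partial A$, and by global attraction to $\xf$ (from (HV3)–(HV6)) forward orbits from $\bar D\setminus A$ re-enter $A$ within a bounded time $T_0$. Hence a zero-cost-in-the-limit path from near $\partial A$ cannot reach $\partial D$. A compactness argument then makes this quantitative. Take a minimizing sequence of paths with costs tending to $0$ and times that are either bounded or can be cut into windows of length $T_0$. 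Using lower semicontinuity of $I_0^{T}$ (the goodness of the rate function, from \cite{SW}), we extract a limit which is a flow line. Such a flow line exits $\bar D\setminus A$ toward $A$, contradicting the fact that it should reach $\partial D$.

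\emph{Main obstacle.} The hardest step is the uniformity in unbounded time horizons in this last argument. I would handle it by chopping paths into pieces of length $T_0$. On each piece, either the cost exceeds a fixed $\kappa>0$, or the piece stays $\epsilon/4$-close to a flow line, which then moves toward $A$ by the inward-pointing condition. A path from $\partial A$ to $\partial D$ needs at least one piece of the first type. Finally, in dimension one the explicit formula $V_*((u,\infty))=\int_u^{\xf}\log(B/D)$ gives both claims directly, which serves as a consistency check.
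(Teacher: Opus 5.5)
Your argument is correct. Its skeleton is the paper's: cut a path from $\xf$ to $\partial D$ at $\partial A$, then show that the piece running from $\partial A$ to $\partial D$ through $\bar D\setminus A$ costs at least some $\kappa>0$, because a nearly free piece must shadow the flow, which by (HS) enters $A$.

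You differ from the paper in how you obtain the key estimate ``deviating from a flow line costs a definite amount''. The paper takes a near-optimal path and its last exit time $T''_\epsilon$ from $\partial A$. It uses the superlinear growth of the Lagrangian (Lemma 5.17 of \cite{SW}) only to force the tail to last at least $T_*$. It then gets $I^*(S_{T_*,\zeta})\ge C_2\zeta$ \emph{probabilistically}, by confronting the large deviation lower bound (Theorem 5.51 of \cite{SW}) with the exponential concentration estimate of Theorem \ref{lipG}(ii). It thus reuses estimates already proved for the process and never needs explicit bounds on the Lagrangian.

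You stay at the level of the variational problem, using $L(x,v)\ge c\min(\|v-\vX(x)\|^2,\|v-\vX(x)\|)$ on $\bar D$ together with Gronwall or goodness of the rate function. What this buys:
\begin{itemize}
\item It is more self-contained: you need neither the Skorohod-openness of $S_{T_*,\zeta}$ nor the LDP lower bound.
\item It gives a direct proof of $V_*(A)>0$ via the Lyapunov function $\|\phi-\xf\|^2$. The appendix never proves this separately; there it would follow from the strict inequality applied to a small ball around $\xf$, which satisfies (HS) by (HV3).
\end{itemize}
The price is that your bound on $L$ needs $B_j,D_j$ bounded away from $0$ on $\bar D$. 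This is the same implicit requirement the paper makes when invoking \cite{SW}.

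Your chopping into windows of length $T_0$, and the appeal to global attraction, are more than you need. As with the paper's $T''_\epsilon$, it suffices to examine the first window after the last exit from $\bar A$. There the reference flow line starts on $\partial A$ and, by the inward condition, penetrates $A$ by a uniform depth within the window. Later windows, in particular a final partial one starting near $\partial D$, give no contradiction by themselves. Choose the closeness tolerance below that penetration depth and below $d(\partial A,\partial D)$, rather than simply $\epsilon/4$.
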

Although this result looks natural, the strict inequalities requires a
proof.

\begin{remark}\label{1dformule}
For $d=1$ the result follows immediately from the explicit expression
of the large deviations potential. Namely, if $0\le a<\xf<b$ 
\begin{equation}
V_{*}\big((a,b)\big)=\min\left\{\int_{a}^{\xf}\log\big(B(y)/D(y)\big)\,dy\,,\,
\int_{\xf}^{b}\log\big(D(y)/B(y)\big)\,\,dy\right\}\;,
\end{equation}
see \cite{VF} page 140.
\end{remark}

\begin{proof}

 Recall that the large deviation functional is given by 
$$
I_{0}^{T}(\vr)=\int_{0}^{T}\ell\big(\vr(s),\,{\vr}'(s)\big)\;ds,
$$
if $\vr(s)$ is absolutely continuous, otherwise infinite, where $\ell$ is the Lagragian (or local rate function) defined in \cite{SW} p. 70.

For any $\epsilon>0$, there exists a time $T_{\epsilon}>0$ and a path
$\vr_{\epsilon}$ with $\vr_{\epsilon}([0,T_{\epsilon})) \subset D$, 
absolutely continuous such that $\vr_{\epsilon}(0)=\xf$,  $\vr_{\epsilon}(T_{\epsilon})\in \partial D$ and 
$$
V_{*}(D)\le I_{0}^{T_{\epsilon}}(\vr_{\epsilon})\le V_{*}(D)+\epsilon\;.
$$
Being absolutely continuous, $\vr_{\epsilon}$ is continuous.

We will choose $\epsilon$ adequately later on, namely $0<
\epsilon\le \epsilon_{1}\wedge\epsilon_{2}$ where $\epsilon_{1}$ and
$\epsilon_{2}$ are two positive constants depending only on $A$, $D$ and
the vector field.

Let $T'_{\epsilon}$ and $T''_{\epsilon}$ be defined by
\begin{align*}
T'_{\epsilon}&=\inf\{t>0\,,\, \vr_{\epsilon}(t)\in\partial A\}\;,
\\
T''_{\epsilon}&=\sup\{t<T_{\epsilon}\,,\, \vr_{\epsilon}(t)\in\partial A\}\;.
\end{align*}
We have obviously since $\ell\ge0$ (see Proposition 5.10 page 85 in \cite{SW}) that 
$$
I_{0}^{T_{\epsilon}}(\vr_{\epsilon})\ge
I_{0}^{T'_{\epsilon}}(\vr_{\epsilon})
+I_{T''_{\epsilon}}^{T_{\epsilon}}(\vr_{\epsilon})\ge V_{*}(A)+
I_{T''_{\epsilon}}^{T_{\epsilon}}(\vr_{\epsilon})\;.
$$
If 
$\ 
I_{T''_{\epsilon}}^{T_{\epsilon}}(\vr_{\epsilon})> \epsilon
\ $
the result follows. 

Assume now  that
$$
I_{T''_{\epsilon}}^{T_{\epsilon}}(\vr_{\epsilon})\le \epsilon\;.
$$
We will show that for $\epsilon$ small enough this leads to a
contradiction.

Let 
$$
{\vy}_{\epsilon}=\vr_{\epsilon}(T''_{\epsilon})\;,\qquad
 {\vz}_{\epsilon}=\vr_{\epsilon}(T_{\epsilon})\;.
$$
Since $D$ is bounded, it follows from Lemma 5.17 page 87 in \cite{SW}
that there exist constants $C_{1}$ and $B_{1}>1$ independent of
$\vr_{\epsilon}$ such that
\begin{align*}
I_{T''_{\epsilon}}^{T_{\epsilon}}(\vr_{\epsilon})&\ge
C_{1}\int_{[T''_{\epsilon},\,T_{\epsilon}]\cap\{\|{\vr_{\epsilon}}'(\,\sbullet\,)\|>B_{1}\}}
 \|{\vr}'_{\epsilon}(s)\|\;\log\|{\vr_{\epsilon}}'(s)\|\;ds
\\&
\ge C_{1}\;\log
B_{1}\;\int_{[T''_{\epsilon},\,T_{\epsilon}]\cap\{\|{\vr_{\epsilon}}'(s)\|>B_{1}\}}
 \|{\vr}'_{\epsilon}(s)\|\;ds\;.
\end{align*}
Let 
$$
\delta=\inf_{\vy\in\partial A,\, \vz\in\partial D}\|\vy-\vz\|>0\;.
$$
We have
\begin{align*}
\delta\le \|{\vy}_{\epsilon}-{\vz}_{\epsilon}\|&=
\left\|\int_{T''_{\epsilon}}^{T_{\epsilon}}\vr'_{\epsilon}(s)\;ds\right\|
\\
&
\le \int_{[T''_{\epsilon},\,T_{\epsilon}]\cap\{\|{\vr}'_{\epsilon}(\,\sbullet\,)\|
  \le B_{1}\}} \|\vr'_{\epsilon}(s)\|\;ds +
\int_{[T''_{\epsilon},\,T_{\epsilon}]\cap\{\|\vr'_{\epsilon}(\,\sbullet\,)\|
 > B_{1}\}}\|\vr'_{\epsilon}(s)\|\;ds
\\&
\le B_{1}\;\big(T_{\epsilon}-T''_{\epsilon}\big)
+\frac{I_{T''_{\epsilon}}^{T_{\epsilon}}(\vr_{\epsilon})}{C_{1}\;\log
  B_{1}}
\le  B_{1}\;\big(T_{\epsilon}-T''_{\epsilon}\big)
+\frac{\epsilon}{C_{1}\;\log B_{1}}\;.
\end{align*}
Take 
$$
\epsilon_{1}=C_{1}\;\frac{\delta}{2}\;\log B_{1}\;.
$$
We get
$$
T_{\epsilon}-T''_{\epsilon}\ge \frac{\delta}{2\,B_{1}}\;.
$$

We will now use the ideas of  the proof of  Lemme 6.28 page 140 in
\cite{SW}.

Let 
$$
T_{*}=\frac{\delta}{2\,B_{1}}\wedge 1\;.
$$
We have of course
$$
I_{T''_{\epsilon}}^{T_{\epsilon}}(\vr_{\epsilon})\ge
I_{T''_{\epsilon}}^{T_{*}+T''_{\epsilon}}(\vr_{\epsilon})\;.
$$

Let us consider  an open set $G$ satisfying Assumption ${\bf (HS)}$ and such that $D\Subset G$. (It is easy to verify that $G$ exists). 
We recall Theorem \ref{lipG} (ii).
There exist $C_{1}(D,G)$ and $C_{2}(D,G)$ and $\zeta(D,G)>0$ such that for any
$\zeta(D,G)>\zeta>0$ and $\vn$ such that $d(\frac{\vn}{K},D)\le  {1\over 2}  d(\partial D, \partial G)$,
\begin{equation}\label{blabla}
\proba_{\vn}\big(\sup_{0\le t\le
  T_{*}}\|\vN(t)-K\varphi_{t}(\vn/K)\|>\,K\zeta\big)\le
C_{1}(D,G)\; e^{-K\,C_{2}(D,G)\,\zeta}\;.
\end{equation}

Let 
$$
S_{T_{*},\,\zeta}=\big\{\vr\in \mathbb{D}^{d}[0,T_{*}]\,,\,\sup_{0\le s\le
  T_{*}}\|\vr( s)-\varphi_{s}(\vy_{\epsilon})\|>\zeta\,\big\} \;.
$$
It is left to the reader to check that 
$S_{T_{*},\,\zeta}$ is open in the Skorohod topology.

By the large deviations lower bound Theorem 5.51 page 107 in \cite{SW} we
have
$$
\proba_{K \vy_{\epsilon}}\big(\frac{1}{K}\vN_{\big|[0,T_{*}]}\in
S_{T_{*},\,\zeta}\big)\ge e^{-K\,I^{*}(S_{T_{*},\,\zeta}) +o(K)}
$$
where
$$
I^{*}(S_{T_{*},\,\zeta})=\inf\big\{I_{0}^{T_{*}}(\vr)\,,
\,\vr\in S_{T_{*},\,\zeta}, \, \vr(0)= \vy_{\epsilon} \big\}\;.
$$
Therefore using \eqref{blabla} for $0<\zeta<\zeta(D,G)$  we get 
$$
I^{*}(S_{T_{*},\,\zeta})\ge C_{2}(D,G)\,\zeta\;.
$$
Since $A$ is invariant by the flow, and the condition on the normal at
the boundary (Assumption ${\bf (HS)}$), we have that
$$
\zeta_{*}=\inf_{\vy\in\partial A}\sup_{0\le t\le T_{*}}d\big(\varphi_{t}(\vy),\,
\partial A)>0\;.
$$
For $\zeta_{**}=\zeta_{*}\wedge\zeta(D,G)>0$,
since $\vr_{\epsilon}(0)=\vy_{\epsilon}\in \partial A$ and
$\vr_{\epsilon}([T''_{\epsilon},\, T_{*}+T''_{\epsilon}])\subset A^{c}$, we have that $\vr_{\epsilon}(T''_{\epsilon}+\vsbullet) | [0,\,T_{*}]\in S_{T_{*},\,\zeta_{**}}$.
This implies 
$$
I_{T''_{\epsilon}}^{T_{*}}(\vr_{\epsilon})\ge I^*(S_{T_{*},\,\zeta_{**}})
\ge C_{2}\, \zeta_{**}>0\;.
$$
We choose
$$
\epsilon_{2}=C_{2}\,\zeta_{**}/2\;,
$$
and the contradiction follows.
\end{proof}

\section{}

\begin{lemma}\label{conv:uKnf}Let $\vx\in \rpd$. Assume there exists
  $A$,  an open subset of $\rpd$
  satisfying Assumption {\bf (HS)} such that $\vx\in A$. Then 
$$
\lim_{K\to\infty} \lauK([K\,\vx])=1\;.
$$
\end{lemma}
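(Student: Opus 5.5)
The plan is to express $\lauK$ as a long-time survival ratio and then use the fact that a process started at $[K\vx]$ stays inside $A$ for a very long time. By \eqref{melange}, applied to $\un_{\domq}$ (i.e.\ to the event $\{\tauex>t\}$), we have for any $\vn$ and $t\ge0$
\begin{equation*}
\bigl|\proba_{\vn}(\tauex>t)-e^{-\lambK t}\lauK(\vn)\bigr|\le\Gamma(t,K).
\end{equation*}
I will take $t=\deltaK=[K^2]$. Then $\Gamma(\deltaK,K)\le C_1(e^{-C_2K}+e^{-C_3K^2/\log K})\to0$ and $e^{-\lambK\deltaK}\to1$ by \eqref{cherlambda}. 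It therefore suffices to show that $\proba_{[K\vx]}(\tauex>\deltaK)\to1$ and that $\proba_{\nu}(\tauex>\deltaK)\to 1$ for the relevant comparison quantities.

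The upper bound $\limsup\lauK([K\vx])\le1$ is immediate from \eqref{supuK}, since $\sup\lauK\le1+e^{-C''K}$. For the lower bound I will use $\tauex\ge\TK_{A}$: a trajectory that has not left $K*A$ has not been absorbed, because $\vecz\notin K*A$ (as $\bar A\Subset\rpd$). Corollary \ref{iter:Kurtz} with $G=A$ and $p=\deltaK$ then gives
\begin{equation*}
\proba_{[K\vx]}(\tauex>\deltaK)\ge1-\deltaK\,e^{-C(\vx,A)K}\to1.
\end{equation*}
Combining this with the displayed estimate yields
\begin{equation*}
\lauK([K\vx])\ge e^{\lambK\deltaK}\bigl(1-\deltaK e^{-C(\vx,A)K}-\Gamma(\deltaK,K)\bigr)\to1.
\end{equation*}

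The only point needing care is that Corollary \ref{iter:Kurtz} requires $K>K(\vx,A)$ and uses integer $p$; both are satisfied since $\deltaK$ is an integer and we take $K$ large. I do not expect a real obstacle here: the main thing to check is that the $\Gamma$ bound from \eqref{melange} is uniform in the starting point, which it is (the supremum over $\vn$), so it applies at $\vn=[K\vx]$ even though that point moves with $K$.
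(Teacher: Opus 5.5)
Your proof is correct and follows the same scheme as the paper's: a lower bound from \eqref{melange} combined with Corollary \ref{iter:Kurtz}, and the upper bound from \eqref{supuK}. The paper works at time $[K\log K]$ and tests \eqref{melange} on $K*A$; you work at time $\deltaK$ and test on the survival event via $\tauex\ge\TK_{A}$, which makes the normalizing factor exactly $\laKqsd(\domq)=1$ and so bypasses the paper's appeal to the concentration of $\laKqsd$ near $\nf$ (there it is only needed through the trivial bound $\laKqsd(K*A)\le 1$), while your stray sentence about $\proba_{\nu}(\tauex>\deltaK)$ plays no role and can be dropped.
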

\begin{remark}
A stronger result was established  in dimension one in \cite{CCM1}
(see Remark 3.8).
\end{remark}

\begin{proof} Using Corollary \ref{iter:Kurtz} with $p=[K\,\log K]$ we get 
$$
\proba_{[K\,\vx]}\big( \vN([K\,\log K])\in K*A\big)\ge 
1-[K\,\log K]\,e^{-C(\vx,A)\,K} \;.
$$
Using \eqref{melange} we get

$$
e^{\lambK\,[K\,\log K]}
\frac{\proba_{[K\,\vx]}\big( \vN([K\,\log K])\in K*A\big)-\Gamma([K\,\log
K],K)}{\laKqsd(K*A)}\le \lauK([K\,\vx])\;.
$$
Since
$$
\laKqsd(K*A)\ge \laKqsd\big(\|\vn-\nf\|\le \sqrt{K}\,\log K\big)\, ,
$$
it follows from Chebyshev's inequality, Theorem 2.6 and Proposition
2.7 of  \cite{CCMM}
that
$$
1\ge \limsup_{K\to\infty}\laKqsd(K*A)\ge
\liminf_{K\to\infty}\laKqsd(K*A)
$$
$$
\ge\lim_{K\to\infty}
\laKqsd\big(\|\vn-\nf\|\le \sqrt{K}\,\log K\big)=1\,.
$$
The result follows using \eqref{supuK} and the previous estimates.
\end{proof}



\end{document}